\newcommand{\Zak}{Z_{a,k}}
\newcommand{\Zbk}{Z_{b,k}}
\let\reftagform@=\tagform@
\def\tagform@#1{\maketag@@@{(\ignorespaces\textcolor{black}{#1}\unskip\@@italiccorr)}}
\newcommand{\iref}[1]{\textup{\reftagform@{\tcr{\ref{#1}}}}}
\newcommand*{\mcap}{\mathbin{\scalebox{1.25}{\ensuremath{\cap}}}}%
\title{Best Pair Formulation \& Accelerated Scheme for Non-convex Principal Component Pursuit}
\author{
  Aritra Dutta \\
  KAUST\\
  Thuwal, KSA \\
  \texttt{aritra.dutta@kaust.edu.sa} 
\And
  Filip Hanzely \\
  KAUST\\
  Thuwal, KSA \\
  \texttt{filip.hanzely@kaust.edu.sa} \\
\And
  Jingwei Liang \\
  University of Cambridge\\
  Cambridge, UK\\
  \texttt{jl993@cam.ac.uk} \\
  \And
Peter Richt\'{a}rik \\
  KAUST\\
  Thuwal, KSA \\
  \texttt{peter.richtarik@kaust.edu.sa} \\
}
\begin{document}

\maketitle

\begin{abstract}
%The {\emph best pair} problem aims to find a pair of points that minimize the distance of two given sets. In this paper, we formulate the classical robust principal component analysis~(RPCA) problem as the best pair; which was not considered before. We propose an accelerated proximal gradient (or forward-backward splitting) algorithm to solve it and show the global convergence to a stationary point. Additionally, we can show a local linear convergence to the optimum. Our extensive numerical experiments on both real and synthetic data suggest that the algorithm outperforms relevant baseline algorithms in the literature. % when applied to various standard problems. 
The {\emph best pair} problem aims to find a pair of points that minimize the distance between two disjoint sets. In this paper, we formulate the classical robust principal component analysis~(RPCA) as the best pair; which was not considered before. We design an accelerated proximal gradient scheme to solve it, for which we show global convergence, as well as the local linear rate. Our extensive numerical experiments on both real and synthetic data suggest that the algorithm outperforms relevant baseline algorithms in the literature. % when applied to various standard problems. 
\end{abstract}

\section{Introduction}\label{sec:introduction}
Let $A\in\mathbb{R}^{m\times n}$ be a given matrix, the generalized low-rank recovery model can be written as% \citep{Udell_glr}:
 \begin{eqnarray}\label{prblm:GLR}
\textstyle \min_{L\in\mathbb{R}^{m\times n}}\mathcal{F} (A,L)+\lambda\mathcal{R}(L),
\end{eqnarray}
where $\mathcal{F}(A, L)$ is a loss function, $\mathcal{R}(L)\eqdef\sum_{i=1}^n\mathcal{R}_i(L)$ is a suitable regularizer, and $\lambda>0$ is a balancing parameter. By an appropriate choice of the loss function and the regularizer, \eqref{prblm:GLR} can express a wide range of low-rank approximation problems of matrices. For example, by setting $\mathcal{F}(A,L)=\|A-L\|_F^2,\lambda = 1$, and $\mathcal{R}(L)=\iota_{{\rm rank}(L)\leq r}(L)$ --- the characteristic function \eqref{eq:charac-func} %\footnote{$\textstyle{\iota_{\mathcal{C}}(x)=\begin{cases}0, x\in\mathcal{C}  , \\ +\infty, x\notin\mathcal{C} . \end{cases}}$} 
of the set $\{L \in \bbR^{m\times n}:{\rm rank}(L)\le r\}$, \eqref{prblm:GLR}, specializes to:
\begin{eqnarray}\label{pca}
\textstyle \min_{L\in\mathbb{R}^{m\times n}} \|A-L\|_F^2+\iota_{{\rm rank}(L)\leq r}(L),
\end{eqnarray}
which is a {\em best approximation} formulation of the classical principal component analysis~(PCA). The solution to problem \eqref{pca} is given by:~$\hat{L}=U{\mathbf H}_r(\Sigma)V^\top,$ where $U\Sigma V^\top = A$ is a singular value decomposition~(SVD) of $A$ and ${\mathbf H}_r(\cdot)$ is the hard-thresholding operator that keeps the $r$ largest singular values. %s and replaces the others by $0$. 
Although PCA is vastly used and a successful designing tool in different engineering applications, it can only handle the presence of uniformly distributed noise and is rather sensitive to sparse outliers in the data matrix~\citep{LinChenMa,APG,candeslimawright}. 
To overcome this shortcoming and to deal with sparse errors, \citep{rpca_1,candeslimawright} replaced the Frobenius norm in (\ref{pca}) by the $\ell_0$~pseudo norm, %set $\mathcal{R}(X)={\rm rank}(X)$, 
and introduced the celebrated {\em principal component pursuit}~(PCP) problem:
\begin{eqnarray}\label{pcp}
\textstyle \min_{L\in\mathbb{R}^{m\times n}}\|A-L\|_{\ell_0}+\lambda{\rm rank}(L). 
%\quad \text{subject to} \quad A=L+S,
\end{eqnarray}
However, the above problem is non-convex and NP-hard. One of the most commonly used, tractable surrogate reformulations of \eqref{pcp} is replacing the rank function with nuclear norm$\|L\|_\star$ %(nuclear norm or $\ell_1$ norm of the singular values of $X$) 
and $\ell_0$ pseudo norm with $\ell_1$-norm $\|A-L\|_{\ell_1}$ \citep{caicandesshen, RechtFazelParrilo2007}.  
Exploiting this idea, {\it Robust PCA} (RPCA) was introduced as a convex surrogate of the PCP problem  \citep{APG,LinChenMa,candeslimawright}:%(see also~\citep{LinChenMa}):
\begin{equation}\label{rpca}
\textstyle    \min_{L\in\mathbb{R}^{m\times n}}\|A-L\|_{\ell_1}+\lambda \|L\|_\star.
\end{equation}
It was shown in~\citep{rpca_1,candeslimawright} that under a rank-sparsity incoherence assumption, problem \eqref{pcp} can be provably solved via \eqref{rpca}, as the solutions of them lie close to each other with high probability. 

Besides~\eqref{rpca}, there are other formulations of RPCA. One of the most popular way is to introduce an auxiliary variable, $S$, and add an additional constraint $L+S = A$, which yields:
\begin{eqnarray}\label{rpca1}
\textstyle \min_{L,S\in\mathbb{R}^{m\times n}}\|S\|_{\ell_1}+\lambda\|L\|_\star
\quad \text{subject to} \quad L+S = A. 
\end{eqnarray}
This {\em constrained} formulation enables several avenues to solve RPCA, such as, the exact and inexact augmented Lagrangian method of multipliers by Lin et al. \citep{LinChenMa}, accelerated proximal gradient method \citep{APG}, alternating direction method \citep{adm_rpca}, alternating projection with intermediate denoising \citep{NIPS2014_5430}, dual approach \citep{dual_rpca}, and SpaRCS \citep{SpaRCS},   manifold optimization by Yi et al.\ \citep{RPCAgd} and Zhang and Yang \citep{zhangpca}, are a few popular ones. We refer to \citep{rpca_methods} for a comprehensive review of RPCA algorithms. 

%One can also consider another generalization of model~ \eqref{rpca1} allowing for dense but small Gaussian noise $E$. Additionally, assume the data is only partially observed, that is, there exists a (projection) operator (or simply a binary mask) $\proj_{\Omega}$ on the set of observed data entries $\Omega\subseteq [m]\times [n]$ and is defined by \[\textstyle{\left(\proj_{\Omega} [X]\right)_{ij} =\begin{cases} X_{ij} & (i,j)\in\Omega\\
%0 & \text{otherwise.}\end{cases}}\]
%The small magnitude of $E$ can be naturally modeled as $\|\proj_{\Omega}(E)\|_F\le \delta$. Thus the desired generalization of \eqref{rpca1} becomes:
%\begin{eqnarray}\label{rpca2}
%\textstyle \min_{L,S}\|S\|_{\ell_1}+\lambda\|L\|_\star
%\quad \text{subject to} \quad \|\proj_{\Omega}(A-L-S)\|_F\le\delta. 
%\end{eqnarray}
%If $\Omega= [m]\times [n]$, that is, with complete observation, the constraint in \eqref{rpca2} changes to $\|A-L-S\|_F\le\delta$ which was studied in~\citep{APG}. Without Gaussian noise, the constraint in~\eqref{rpca2} modifies to $\proj_{\Omega}(A-L-S)=0$ as in~\citep{APG}, obtaining the noiseless matrix recovery problem from incomplete observation. Additionally, if $S=0$ then \eqref{rpca2} is noiseless matrix completion \citep{KeshavanMC}. 
For the discussion above, $A$ is fully observed with no data missing. One can consider that $A$ is partially observed, that is, there exists a projection operator (or simply a Bernoulli binary mask) $\proj_{\Omega}$ on the set of observed data entries $\Omega\subseteq [m]\times [n]$ and is defined by 
\beq\label{eq:binary}
\textstyle{\left(\proj_{\Omega} [A]\right)_{ij} 
=\begin{cases} 
A_{ij} & (i,j)\in\Omega , \\
0 & \text{otherwise.}
\end{cases}}
\eeq
The partial observed version of \eqref{rpca1} reads
\begin{eqnarray}\label{rpca2}
\textstyle \min_{L,S\in\mathbb{R}^{m\times n}}\|S\|_{\ell_1}+\lambda\|L\|_\star
\quad \text{subject to} \quad \proj_{\Omega}(L+S) = \proj_{\Omega}(A) . 
\end{eqnarray}

Besides \eqref{rpca1} and \eqref{rpca2}, other tractable reformulations of \eqref{pcp} still exist. For example, if the rank and target sparsity is user-inferred then it is common practice to relax the equality constraint in \eqref{rpca1} and consider it in the objective function as a penalty. This, together with explicit constraints on the target rank, $r$, and target sparsity level, $\alpha$,~({\it user-inferred} hyperparameters), leads to the GoDec formulation \citep{godec}. One can also extend the above model to the case of partially observed data that leads to a more general class of problems that is commonly known as the {\em robust matrix completion (RMC)} problem \citep{Chen_RMC,rmc_taoyuan, CherapanamjeriJN17,CherapanamjeriGJ17} 
that contains the variant proposed in \citep{godec} as a special case. With $S=0$, the matrix completion (MC) problem is also a special case of the RMC problem \citep{candesplan,Jain,caicandesshen,JN15,candes_MC,KeshavanMC,candes_MC2,Marecek_MC,LMAFIT}. 
Lastly, when the whole matrix is observed, the RMC problem is nothing but \eqref{rpca1}.

Recently, \citep{duttahanzely} reformulated~\eqref{pcp} as a non-convex feasibility problem, which does not require any objective function, convex relaxation, or surrogate convex constraints. Rather, it exploits the following idea: the solution to the PCP problem lies in the intersection of two sets---one convex and one non-convex, if one considers both the target rank $r$ and the target sparsity $\alpha$ as hyperparameters. 
Let $X = \BPa{ {\footnotesize \begin{matrix} S \\ L  \end{matrix} } } \in \bbR^{2m \times n}$ and $K = [\Id, \Id]$ where $\Id$ is the identity operator of $\bbR^{m\times n}$, define
\[
\begin{aligned}
\calX \eqdef \Ba{X : KX = A} , \enskip
\calY \eqdef \Ba{X : \rank(L) \leq r, \norm{S_{i, \cdot}}_{0} \leq\alpha m , \norm{S_{\cdot, j}}_{0} \leq \alpha n ,~ i \in [m],~ j \in [n]}.
\end{aligned}
\]
Note that $\calX$ is convex and $\calY$ is non-convex\footnote{The $\alpha$-sparsity constraint on $S$ means that for $\alpha\in(0,1)$, each row and column of $S$ contains no more than $\alpha n$ and $\alpha m$ number of non-zero entries, respectively. This is slightly more complicated than directly applying $\| \cdot \|_0$ constraint. However, it often works better in practice.}. 
Given the sets, Dutta et al.\ \citep{duttahanzely} reformulated~\eqref{pcp} as non-convex feasibility problem:
\beq\label{eq:ncvx-pcp}
\textstyle \find~~ X \in \bbR^{2m \times n} \enskip \textrm{such that} \enskip X \in  \calX \cap \calY  .   
\eeq
Note that if we replace the $\Id$ in $K$ with Bernoulli binary matrix, then we obtain the reformulation of PCP problem with partial observation. 
% A slight modification of the above model was also proposed in order to solve noiseless low-rank and sparse matrix recovery problem from partially observed data. 

\vspace{-5pt}
\subsection{Formulation and Contributions}
In this paper we consider reformulating the feasibility problem~\eqref{eq:ncvx-pcp} as a {\em best pair} problem. 
Given two sets $\calX, \calY \subset \bbR^{2m\times n}$, the best pair problems aims to find a pair of points $(X^\star, Y^\star) \in \calX \times \calY$ such that they have the closest distance, that is $(X^\star, Y^\star)$ a the solution of the problem below:
\beq\label{eq:X-Y}
\min_{X\in \calX, Y \in \calY} ~ \sfrac{1}{2} \norm{X-Y}^2.
\eeq 
When the intersection of $\calX$ and $\calY$ is non-empty, that is $\calX \cap \calY \neq \emptyset$, \eqref{eq:X-Y} reduces to the feasibility problem, with $X^\star = Y^\star \in \calX\cap\calY$. Given a set $\calX$, define its characteristic function by
\begin{equation}\label{eq:charac-func}
\textstyle
\iota_{\calX}(X) \eqdef
\left\{
\begin{aligned}
0 &: X \in \calX ,\\
+\infty &: \text{otherwise}.
\end{aligned}
\right.
\end{equation}
Then \eqref{eq:X-Y} can be equivalently written as
\beq\label{eq:BP}
\min_{X, Y \in \bbR^{2m\times n}} \iota_{\calX}(X) + \sfrac{1}{2} \norm{X-Y}^2 + \iota_{\calY}(Y)   .
\eeq
Observe that for a given $Y$, problem \eqref{eq:BP} becomes $\min_{X \in \bbR^{2m\times n}} \iota_{\calX}(X) + \frac{1}{2} \norm{X-Y}^2$ which is the Moreau envelope \citep{bauschke2011convex} of $\iota_{\calX}(X)$ of index $1$: 
\[
^1\Pa{\iota_{\calX}(Y)} \eqdef \min_{X\in\bbR^{2m\times n}} \sfrac{1}{2}\norm{X-Y}^2 + \iota_{\calX}(X)  .
\]
As a result, we can simplify \eqref{eq:BP} to the case of only $Y$,
\beq\label{eq:BP-ME}
    \boxed{\textstyle \min_{Y \in \bbR^{2m\times n}} \iota_{\calY}(Y) + \, ^1\Pa{\iota_{\calX}(Y)}.}
\eeq
For the rest of the paper, we focus on \eqref{eq:BP-ME} and our main contributions are summarised below:
\begin{itemize}[leftmargin=2em]
    \item \textbf{New formulation and a new algorithm for non-convex PCP.}
  We reformulate the non-convex set feasibility formulation of RPCA to a {\em best pair} problem. Although our formulation was inspired by formulation~\eqref{eq:ncvx-pcp} from \citep{duttahanzely}, to the best of our knowledge, we are the first to formulate and solve RPCA via the best pair. To this end, we design a fast and efficient algorithm---an accelerated proximal gradient method---to solve it. 
    
   \item \textbf{Theoretical convergence guarantees.} Both global and local convergence analysis of the scheme are provided. Globally, we show that our algorithm converges to a critical point. If the algorithm additionally starts sufficiently close to the optimum, we show that it converges to a global minimizer. Locally, our algorithm enjoys a fast linear rate, which we can sharply estimate. We owe this novelty to our best pair formulation. In contrast, the non-convex projection RPCA from \citep{duttahanzely} or GoDec~ \citep{godec} can only guarantee a local linear convergence. 
   
\item \textbf{Numerical experiments and applications to real-world problems.}
We apply the proposed method to several well-tested applications in computer vision. Our extensive experiments on both real and synthetic data suggest that our algorithm matches or outperforms relevant baseline algorithms in {\em fractions} of their execution time. Additionally, in the supplementary material, we provide empirical validity of the hyperparameters sensitivity of our approach. 
\end{itemize}

%\begin{itemize}
%    \item \textbf{New formulation and a new algorithm for nonconvex PCP.}
%  We reformulate the nonconvex set feasibility formulation of RPCA to a {\em best pair} problem. Although our formulation was inspired by the nonconvex RPCA formulation in \citep{duttahanzely}, to the best of our knowledge, we are the first to formulate and solve RPCA via the best pair formulation. To this end, we design a fast and efficient algorithm---an accelerated proximal gradient method---to solve it. 
%    
%   \item \textbf{Theoretical convergence guarantees.} Both global and local convergence analysis of the scheme are provided. Globally, we show that our algorithm converges to a critical point. If the algorithm additionally starts sufficiently close to the optimum, we show that it converges to a global minimizer. Locally, our algorithm enjoys a fast linear rate, which we can sharply estimate. We owe this novelty to our best pair formulation. In contrast, the non-convex projection RPCA in \citep{duttahanzely} or GoDec in \citep{godec} can only guarantee a local linear convergence. 
%   
%\item \textbf{Numerical experiments and applications to real-world problems.}
%We applied our best-pair RPCA formulation to several well-tested applications in computer vision. Our extensive experiments on both real and synthetic data suggest that our algorithm matches or outperforms relevant baseline algorithms in {\em fractions} of their execution time. Additionally, in the Appendix we provide empirical validity of the hyperparameters sensitivity of our approach. 
%\end{itemize}

\subsection{Notations}
Throughout the paper, $\bbN$ is the set of non-negative integers. %$\Id$ denotes the identity operator on $\bbR^n$. 
For a nonempty closed convex set $\Omega \subset \RR^n$, %$\ri(\Omega)$ is its relative interior, and $\LinHull(\Omega)=\bbR(\Omega-\Omega)$ is the subspace parallel to it. 
denote $\proj_{\Omega}$ the orthogonal projector onto $\Omega$. %For a linear operator $A : \RR^{m}  \to \RR^{n}$, we denote $A_T = A \circ \proj_T$, and $A^{+}$ its Moore-Penrose pseudo-inverse.
% Let $\obbR \eqdef \bbR \cup \ba{\pinf}$,
Let $\calR: \bbR^n \to \bbR \cup \ba{\pinf}$ be a lower semi-continuous (lsc) function, its domain is defined as $\dom(\calR) \eqdef \ba{x\in\bbR^{n} : \calR(x) < \pinf}$, and it is said to be proper if $\dom(\calR) \neq \emptyset$. 
We need the following notions from variational analysis, see e.g. \citep{rockafellar1998variational} for details. Given $x\in \dom(\calR)$, the Fr\'echet subdifferential $\partial^{F} \calR(x)$ of $\calR$ at $x$, is the set of vectors $v\in\bbR^{n}$ that satisfies $\liminf_{z\to x ,\,  z \neq x} \frac{1}{\norm{x-z}} \pa{\calR(z) - \calR(x) - \iprod{v}{z-x}} \geq 0$. 
%\[
%\liminf_{z\to x ,\,  z \neq x} \sfrac{1}{\norm{x-z}} \Pa{R(z) - R(x) - \iprod{v}{z-x}} \geq 0  .
%\]
If $x \notin \dom(\calR)$, then $\partial^{F} \calR(x) = \emptyset$. 
The limiting-subdifferential (or simply subdifferential) of $\calR$ at $x$, written as $\partial \calR(x)$, is defined as $\partial \calR(x) \eqdef \ba{ v \in \bbR^{n} : \exists \xk \to x , \calR(\xk) \to \calR(x) ,   \vk \in \partial^{F} \calR(\xk) \to v }$. 
%\[
%\partial R(x) 
%\eqdef \Ba{ v \in \bbR^{n} : \exists \Yk \to x , R(\Yk) \to R(x) ,   \vk \in \partial^{F} R(\Yk) \to v } .
%\]
% 
Denote $\dom(\partial \calR) \eqdef \ba{x\in\bbR^n: \partial \calR(x) \neq \emptyset}$. Both $\partial^{F} \calR(x)$ and $\partial \calR(x$) are closed, with $\partial^{F} \calR(x)$ convex and $\partial^{F} \calR(x) \subset \partial \calR(x)$ \citep[Proposition~8.5]{rockafellar1998variational}. Since $\calR$ is lsc, it is (subdifferentially) regular at $x$ if and only if $\partial^{F} \calR(x) = \partial \calR(x)$ \citep[Corollary~8.11]{rockafellar1998variational}.
%
%A lsc function $R$ is $r$-prox-regular at $\xbar \in \dom(R)$ for $\vbar \in \partial R(\xbar)$ if $\exists r > 0$ such that $R(x') > R(x) + \iprod{v}{x'-x} - \frac{1}{2r}\norm{x-x'}^2$ $\forall x,x'$ near $\xbar$, $R(x)$ near $R(\xbar)$ and $v\in\partial R(x)$ near $\vbar$. 
%$R$ is prox-regular at $\xbar$ if it is prox-regular for every $\vbar \in \partial R(\xbar)$.
%
A necessary condition for $x$ to be a minimizer of $\calR$ is $0 \in \partial \calR(x)$. The set of critical points of $\calR$ is $\crit(\calR)=\ba{x \in \bbR^n: 0 \in \partial \calR(x)}$.

\vspace{-5pt}
\section{An accelerated proximal gradient method}\label{sec:algorithm}

In this section, we describe a gradient-based optimization method for solving \eqref{eq:BP-ME}. Denote $\proj_{\calX}, \proj_{\calY}$ the projection operators onto $\calX$ and $\calY$, respectively. 
Since $\calX$ is a non-empty closed convex set, its characteristic function $\iota_{\calX}$ is proper convex and lower semi-continuous. Owing to \citep{bauschke2011convex}, the Moreau envelope is convex differentiable with gradient reads
\[
\nabla \Pa{ ^1\pa{\iota_{\calX}(Y)} } = \pa{ \Id - \proj_{\calX} } (Y) 
\]
which is $1$-Lipschitz continuous. 
Clearly, \eqref{eq:BP-ME} admits a ``non-smooth + smooth'' structure, and in literature one prevailing algorithm to apply is the proximal gradient method \citep{lions1979splitting}, a.k.a. Forward--Backward splitting. In this paper, we consider an accelerated version of the method, see Algorithm~\ref{alg:apg}, which is based on inertial technique.

\begin{algorithm}[h]
\caption{An accelerated proximal gradient method} \label{alg:apg}
{\noindent{\bf{Initial}}}: Let $\gamma \in ]0, 2]$ and choose $Y_{0} \in \bbR^{2m \times n}, Y_{-1} = Y_{0}$. \\
\Repeat{convergence}{
\beq \label{eq:apg}
\begin{gathered}
%\texttt{Maybe we want different notation instead of U,V,}
% , \\
%\texttt{ as this might indicate SVD and confuse the reader}  , \\
\Zak = \Yk + \ak\pa{\Yk-\Ykm}    ,  \\
\Zbk = \Yk + \bk\pa{\Yk-\Ykm}    ,  \\
\Ykp = \proj_{\calY}\Pa{\Zak - \gamma \pa{\Zbk - \proj_{\calX}(\Zbk)}} .
\end{gathered}
\eeq
$k = k + 1$\;
}
\end{algorithm}

%\vspace{-4ex}

\begin{remark}$~$
    \begin{itemize}[leftmargin=2em]
        \item If we choose $\gamma = 1$ and $\ak, \bk \equiv 0$, Algorithm \ref{alg:apg} becomes the Backward--Backward splitting, which is the method of alternating projections for the considered feasibility problem \eqref{eq:ncvx-pcp}. Therefore, we recover the method from~\citep{duttahanzely} as a special case. 
        
        \item From \eqref{eq:ncvx-pcp} to \eqref{eq:BP-ME}, we can also consider the Moreau envelope of the non-convex set $\calY$, that is
        \[
    \textstyle \min_{X \in \bbR^{2m\times n}} \iota_{\calX}(X) + \, ^1\Pa{\iota_{\calY}(X)}  ,
        \]
        which also works well in practice. 
        % However, the Moreau envelope for this case is non-convex which makes the convergence analysis much more complex. 
        
        %Algorithm \ref{alg:apg} has convergence guarantees since the Moreau envelope is convex. However, its practical performance is not as good as considering the Moreau envelope of $\iota_{\calY}(Y)$. 
        
        % \texttt{We should elaborate on this point more. }
        
        \item Algorithm \ref{alg:apg} is a special cases of the multi-step inertial proximal gradient descent method considered in \citep{liang2016multi} for general non-convex composite optimization. 
        
    \end{itemize}
\end{remark}

Note that the two projection operators $\proj_{\calX}, \proj_{\calY}$ are very easy to compute. Given $X = \BPa{ {\footnotesize \begin{matrix} S \\ L  \end{matrix} } } $, since $\calX$ is an affine subspace, the projection of $X$ onto $\calX$ reads
$\proj_{\calX}(X) 
= %\BPa{ {\footnotesize \begin{matrix} S \\ L  \end{matrix} } } + 
\qfrac{1}{2} \BPa{ {\footnotesize \begin{matrix} A + S - L \\ A - S + L \end{matrix} } } $. 
If $K = [\proj_{\Omega}, \proj_{\Omega}]$ where $\proj_{\Omega}$ is the binary mask defined in \eqref{eq:binary}, then for the partial observed case, we have
\[
\proj_{\calX}(X) 
= \BPa{ {\footnotesize \begin{matrix} S \\ L  \end{matrix} } } + 
\qfrac{1}{2} \BPa{ {\footnotesize \begin{matrix} \proj_{\Omega} [ A - S - L ] \\ \proj_{\Omega} [ A - S - L ] \end{matrix} } }  .
\]
For the projection $\proj_{\calY}$ which contains a low-rank projection and sparsity projection, we refer to \citep{duttahanzely} for more details.

\subsection{Global convergence}

Since set $\calY$ is semi-algebraic \citep{bolte2010characterizations}, 
our global convergence guarantees of Algorithm \ref{alg:apg} is based on \KL property. %, which has drawn an increasing attention in recently years for non-convex optimization. 

\textbf{\KL property.}~~ 
Let $R : \bbR^{n} \to \bbR \cup \ba{\pinf}$ be a proper lsc function. For $\eta_1, \eta_2$ such that $\ninf < \eta_1 < \eta_2 < \pinf$, define the set %$[ \eta_1< J < \eta_2 ] \eqdef \ba{ x\in \bbR^{n} : \eta_1 < J(x) < \eta_2}$.
 \[
 [ \eta_1< R < \eta_2 ] \eqdef \ba{ Y \in \bbR^{n} : \eta_1 < R(Y) < \eta_2}  .
 \]
% The definition below is taken from \citep{attouch2010proximal}.

\begin{definition}\label{defn:KLp}%[\KL property]\label{defn:KLp}
Function $R$ is said to have the \KL property at $\Ybar \in \dom(R)$ if there exists $\eta \in ]0, \pinf]$, a neighbourhood $U$ of $\Ybar$ and a continuous concave function $\varphi:[0, \eta[ \to \bbR_{+}$ such that
\begin{enumerate}[label={\rm (\roman{*})}, leftmargin=3em]
\item
$\varphi(0) = 0$, $\varphi$ is $C^1$ on $]0, \eta[$, and for all $s \in ]0, \eta[$, $\varphi'(s) > 0$;
\item
for all $Y \in U\cap [R(\Ybar)<R<R(\Ybar) + \eta]$, the \KL inequality holds
\beq\label{eq:KLi}
\varphi'\Pa{R(Y)-R(\Ybar)}\dist\Pa{0, \partial R(Y)} \geq 1 .
\eeq
\end{enumerate}
Proper lsc functions which satisfy the \KL property at each point of $\dom(\partial R)$ are called KL functions.
\end{definition}

%Roughly speaking, KL functions become sharp up to a reparameterization via $\varphi$, called a desingularizing function for $R$. 
KL functions include the class of semi-algebraic functions, see \citep{Bolte07,bolte2010characterizations}. For instance, the $\ell_0$ pseudo-norm and the rank function are KL. %\citep{Bolte07,bolte2010characterizations}.

\textbf{Global convergence.}~~ 
To deliver the convergence result, we rewrite \eqref{eq:BP-ME} into the following generic form
\beq\label{eq:R-plus-F}%\tag{$\calP$}
\textstyle \min_{Y\in\bbR^{2m\times n}} \Ba{\Phi(Y) \eqdef \calR(Y) + \calF(Y)}  ,
\eeq
where we assume that
\begin{enumerate}[label= ({\textbf{A.\arabic{*}})},ref= \textbf{A.\arabic{*}}, leftmargin=3.75em]
\item\label{item:A-R}
$\calR: \bbR^n \to \bbR\cup\ba{\pinf}$ is proper lower semi-continuous, {and bounded from below};
\item\label{item:A-F}
$\calF: \bbR^{n} \to \bbR$ is convex differentiable and its gradient $\nabla \calF$ is $L$-Lipschitz continuous. 
\end{enumerate}

Let $\nu > 0$ be a constant. Define the following quantities,
\beq\label{eq:beta-alphai}
\beta_{k} \eqdef \sfrac{1 - \gamma L - \ak - \nu}{2\gamma}   ,~~ 
\ubeta \eqdef \liminf_{\kinN} \beta_{k} 
\qandq
\alpha_{k} \eqdef \sfrac{\gamma\bk^2 L^2 + \nu\ak}{2\nu\gamma} ,~~ 
\oalpha \eqdef \limsup_{\kinN} \alpha_{k}  .
\eeq

\begin{theorem}[Global convergence]\label{thm:convergence-apg}
For problem \eqref{eq:R-plus-F}, assume \iref{item:A-R}-\iref{item:A-F} hold, and that $\Phi$ is a proper lsc KL function which is bounded from below.
For Algorithm \ref{alg:apg}, choose $\nu, \gamma, \ak, \bk$ such that
\beq\label{eq:ineq-parameters}
\delta \eqdef \ubeta - \oalpha > 0 .
\eeq
Then each bounded sequence $\sequence{\Yk}$ satisfies
\begin{enumerate}[label = {\rm (\roman{*})}, ref = {\rm \roman{*}}]
\item \label{it:aa1}
$\sequence{\Yk}$ has finite length, \ie $\sum_{\kinN} \norm{\Yk-\Ykm} < \pinf$;
\item \label{it:aa2}
There exists a critical point $\Ysol \in \crit(\Phi)$ such that $\lim_{k\to \infty} \Yk = \Ysol$.
\item \label{it:aa3}
{If $\Phi$ has the KL property at a global minimizer $\Ysol$, then starting sufficiently close from $\Ysol$, any sequence $\sequence{\Yk}$ converges to  a global minimum of $\Phi$ and satisfies \iref{it:aa1}.}
\end{enumerate}
\end{theorem}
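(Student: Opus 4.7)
The plan is to verify the three ingredients required by the abstract KL convergence framework for nonconvex descent methods (Attouch--Bolte--Svaiter, Bolte--Sabach--Teboulle), as applied to inertial schemes in \citep{liang2016multi}: (a) sufficient decrease of a suitable Lyapunov function, (b) a subgradient bound in terms of successive differences, and (c) a continuity/closure property on limit points. Once (a)--(c) are established, finite length~\iref{it:aa1} and subsequential convergence to a critical point~\iref{it:aa2} follow from the general template; claim~\iref{it:aa3} then requires an additional ``capture'' argument using the \KL inequality at a global minimizer.

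For~(a), I would introduce the Lyapunov function $\Psi_k \eqdef \Phi(\Yk) + c \norm{\Yk - \Ykm}^2$ with $c \in\,]\oalpha, \ubeta[$, which is nonempty by the standing assumption $\delta > 0$. The iterate $\Ykp$ is by construction the argmin of $\calR(Y) + \tfrac{1}{2\gamma}\norm{Y - (\Zak - \gamma \nabla \calF(\Zbk))}^2$, so comparing objective values at $\Ykp$ and $\Yk$ yields
\begin{equation*}
\calR(\Ykp) + \tfrac{1}{2\gamma}\norm{\Ykp - \Zak}^2 \le \calR(\Yk) + \tfrac{1}{2\gamma}\norm{\Yk - \Zak}^2 - \iprod{\nabla \calF(\Zbk)}{\Ykp - \Yk}.
\end{equation*}
Combining this with the descent lemma for the $L$-smooth $\calF$ applied between $\Zbk$ and $\Ykp$, expanding $\norm{\Ykp - \Zak}^2$ via $\Zak - \Yk = \ak(\Yk - \Ykm)$, and using a Young-type bound with the free parameter $\nu$ to split the cross term produces exactly
\begin{equation*}
\Phi(\Ykp) + \beta_k \norm{\Ykp - \Yk}^2 \le \Phi(\Yk) + \alpha_k \norm{\Yk - \Ykm}^2,
\end{equation*}
with the constants of~\eqref{eq:beta-alphai}. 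Taking $\liminf$/$\limsup$ converts this into $\Psi_{k+1} \le \Psi_k - \mu \norm{\Ykp - \Yk}^2$ for some $\mu > 0$ and all large $k$, hence $\sum_k \norm{\Ykp - \Yk}^2 < \infty$. The delicate and, I expect, main obstacle is handling the two distinct inertial points $\Zak$ and $\Zbk$ simultaneously; introducing the auxiliary parameter $\nu$ is precisely what splits the cross terms and yields the sharp threshold $\delta > 0$.

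For~(b), the optimality condition of the projection step gives
\begin{equation*}
\tfrac{1}{\gamma}(\Zak - \Ykp) + \nabla \calF(\Ykp) - \nabla \calF(\Zbk) \in \partial \Phi(\Ykp),
\end{equation*}
and the norm of this element is bounded by a constant times $\norm{\Ykp - \Yk} + \norm{\Yk - \Ykm}$ thanks to $L$-Lipschitzness of $\nabla \calF$ and the definitions of $\Zak, \Zbk$. Boundedness of $\sequence{\Yk}$ together with the lower semicontinuity of $\calR$ and continuity of $\calF$ then yields~(c) at any limit point. Feeding (a)--(c) and the \KL property of $\Phi$ into the standard recursive argument that trades the \KL inequality against the Lyapunov descent upgrades square-summability to plain summability, proving~\iref{it:aa1} and~\iref{it:aa2}. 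For~\iref{it:aa3}, I would take $Y_0 \in \calY$ in a ball around a global minimizer $\Ysol$ small enough that both $\Psi_0 - \Phi(\Ysol) < \eta$ and $\varphi(\Psi_0 - \Phi(\Ysol))$ is tiny (continuity here is secured because $\calR = \iota_\calY$ is constant on $\calY$ and $\calF$ is smooth, together with $Y_{-1} = Y_0$); a standard induction then shows the trajectory cannot escape the \KL neighborhood of $\Ysol$, so by~\iref{it:aa2} it converges to a critical point with the same objective value $\Phi(\Ysol)$, which is therefore a global minimizer.
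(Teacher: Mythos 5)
Your proposal is correct and follows essentially the same route as the paper: the same merit function $\Phi(\Yk)+c\norm{\Yk-\Ykm}^2$ on the product space (the paper takes $c=\oalpha$ rather than $c$ strictly between $\oalpha$ and $\ubeta$, an immaterial difference), the same argmin-comparison plus descent-lemma plus Young-with-$\nu$ derivation of the sufficient-decrease inequality with the constants of \eqref{eq:beta-alphai}, the same subgradient bound $\norm{\Gkp}\lesssim \Dt{k+1}+\Dt{k}$, the same closure argument at cluster points, and the same KL recursion and capture-ball induction for \iref{it:aa3}. No gaps.
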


The proof of the above theorem can be found in the supplementary material. We also refer to \citep{liang2016multi} and the reference therein for more results on non-convex proximal gradient method.

%\[
%\begin{gathered}
%\Zak = \Yk + \ak\pa{\Yk-\Ykm}    ,  \\
%\Zbk = \Yk + \bk\pa{\Yk-\Ykm}    ,  \\
%\Ykp = \prox_{\gamma R}\Pa{\Zak - \gamma \nabla F(\Zbk } .
%\end{gathered}
%\]

\subsection{Local linear convergence}

Now we turn to the local perspective and present a local linear convergence analysis for Algorithm \ref{alg:apg}. For the constraint set $\calY$ define in \eqref{eq:ncvx-pcp}, consider the following decomposition of it
\[
\calY_{L} \eqdef \BBa{ Y = \BPa{ {\footnotesize \begin{matrix} S \\ L  \end{matrix} } }  : \rank(L) \leq r}
\qandq
 \calY_{S} \eqdef \BBa{ Y = \BPa{ {\footnotesize \begin{matrix} S \\ L  \end{matrix} } } : \textrm{$S$ is $\alpha$-sparse} }  .
\]
For the sequence $\Yk$ generated by \eqref{eq:apg}, suppose $\Yk = \BPa{ {\footnotesize \begin{matrix} S_{k} \\ L_{k}  \end{matrix} } }$. It is immediate that $\rank(L_{k}) \leq r$ holds for all $k$. For $S_{k}$, though it is always $\alpha$-sparse, the locations of non-zero elements change along the course of iteration. 
In the following, we first show that after a finite number of iterations the locations of non-zero elements of $S_{k}$ stop changing, that is $S_{k}$ will have the same support as that of $S^\star$ to which $S_{k}$ converges, and then Algorithm \ref{alg:apg} enters a linear convergence regime.

\textbf{Support identification of $S_{k}$.}~~ 
Let $\Ysol = \BPa{ {\footnotesize \begin{matrix} S^\star \\ L^\star  \end{matrix} } }$ be a critical point of \eqref{eq:BP-ME} to which $\Yk$ converges. Let $\calS$ be the subspace extended by the support of $S^\star$. Clearly, $S^\star \in \calS$ and we have the result below concerning the relation between $S_{k}$ and $\calS$.

\begin{theorem}[Support identification]\label{thm:supp-iden}
For Algorithm \ref{alg:apg}, suppose Theorem \ref{thm:convergence-apg} holds. Then $\Yk$ converges to a critical point $Y^\star$ of \eqref{eq:BP-ME}. For all $k$ large enough, we have $S_{k}  \in \calS$. 
%There exists a $K > 0$ such that for all $k \geq K$, 
%\beqn% \label{eq:supp-iden}
%S_{k}  \in \calS .
%\eeqn
% 
\end{theorem}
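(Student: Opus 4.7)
The plan is to leverage the convergence $\Yk \to Y^\star$ established in Theorem \ref{thm:convergence-apg}, together with continuity of the projection $\proj_{\calX}$ onto the affine set $\calX$, in order to reduce the support-identification claim to a local stability property of the combinatorial sparsity projection $\proj_{\calY_S}$ near $S^\star$.

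First I would unpack one iteration of \eqref{eq:apg} by writing $Y_{k+1} = \proj_{\calY}(W_{k})$ with $W_{k} \eqdef \Zak - \gamma\pa{\Zbk - \proj_{\calX}(\Zbk)}$. The finite-length conclusion \iref{it:aa1} of Theorem \ref{thm:convergence-apg} yields $\Yk - Y_{k-1} \to 0$, and since the inertial parameters $\ak, \bk$ must be bounded in order for \eqref{eq:ineq-parameters} to hold, we get $\Zak, \Zbk \to Y^\star$. Because $\calX$ is an affine subspace, $\proj_{\calX}$ is affine and thus continuous, so $W_{k} \to W^\star \eqdef Y^\star - \gamma\pa{Y^\star - \proj_{\calX}(Y^\star)}$. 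In particular, the $S$-block converges componentwise, $W_k^S \to W_\star^S$.

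Next I would exploit the critical-point condition. Since $Y^\star \in \crit(\Phi)$, the limiting subdifferential calculus gives $\proj_{\calX}(Y^\star) - Y^\star \in N_{\calY}(Y^\star)$, which for $\gamma \in \,]0,2]$ yields the selection $Y^\star \in \proj_{\calY}(W^\star)$. This identifies $Y^\star$ as a selector of the set-valued projection at the limiting argument $W^\star$. Now decompose the projection onto $\calY$ into its low-rank component on $L$ and its row/column-sparsity component on $S$. For the $S$-block, the projector $\proj_{\calY_S}$ selects, in each row and column, the coordinates with the $\alpha n$ (resp.\ $\alpha m$) largest magnitudes, intersected in a consistent way. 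Under the mild nondegeneracy that the entries of $W_\star^S$ indexed by $\supp(S^\star)$ strictly dominate, in the ranking used by $\proj_{\calY_S}$, the off-support entries of $W_\star^S$, a standard perturbation argument shows that $\proj_{\calY_S}$ is locally support-stable at $W_\star^S$: there is a neighbourhood $U$ of $W_\star^S$ such that every $W \in U$ satisfies $\supp(\proj_{\calY_S}(W)) \subseteq \supp(S^\star)$. Combined with $W_k^S \to W_\star^S$, this gives $S_{k+1} \in \calS$ for all $k$ large enough.

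The main obstacle is precisely this local support-stability step, because $\calY_S$ is a union of coordinate subspaces subject to coupled row and column cardinality constraints, and $\proj_{\calY_S}$ is therefore both set-valued and combinatorial; its support cannot in general be expected to be locally constant without some strict-separation hypothesis on the ranking at $W_\star^S$. I would handle this either by imposing such a nondegeneracy assumption at $Y^\star$ (the typical route in identification proofs for hard-thresholded schemes) or, cleaner, by a contradiction argument: if infinitely many $S_k$ carry a non-zero entry outside $\supp(S^\star)$, extract a subsequence along which a fixed off-support index is selected, pass to the limit using continuity of the top-$k$ selector on a generic set, and contradict the limiting inclusion $Y^\star \in \proj_{\calY}(W^\star)$. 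The $L$-block is controlled analogously using Weyl's inequality together with a strict singular-value gap at $L^\star$, which together yield rank identification and, if needed, range-space identification; but the statement of the theorem concerns only $S_k$, so this component plays no role here.
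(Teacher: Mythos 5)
Your route is genuinely different from the paper's. The paper never touches the combinatorics of the projection: it observes that $\calS$ is a subspace and $\calY_{L}$ a fixed-rank manifold, hence the indicator of $\calY$ is \emph{partly smooth} at $\Ysol$ relative to $\calS\times\calY_{L}$, checks the three convergence ingredients $\Yk\to\Ysol$, $\Phi(\Yk)\to\Phi(\Ysol)$ and $\dist\pa{-\nabla\calF(\Ysol),\partial\calR(\Ykp)}\to 0$ (the last via Lemma~\ref{lem:subgradient} and the finite-length property), and then invokes the finite-identification theorem of \citep{LewisPartlyTiltHessian,drusvyatskiy2013optimality} as a black box. You instead work directly with the explicit update $\Ykp\in\proj_{\calY}(W_k)$, show $W_k\to W^\star$ (your boundedness argument for $\ak,\bk$ from \eqref{eq:ineq-parameters} is fine, and the limiting inclusion $Y^\star\in\proj_{\calY}(W^\star)$ is most cleanly obtained by closedness of the graph of $\proj_{\calY}$ rather than through the limiting normal cone, which for a non-convex set does not by itself give the projection inclusion), and then argue local stability of the top-$\alpha$ selection near the $S$-block of $W^\star$. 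Both arguments stand or fall on the same hypothesis: the identification theorems the paper cites require the nondegeneracy condition that $-\nabla\calF(\Ysol)$ lie in the relative interior of $\partial\calR(\Ysol)$, which on the sparsity block is precisely your strict-separation condition on the ranking of the entries of the $S$-block of $W^\star$. The paper does not verify this condition; you state it explicitly. Under that hypothesis your argument is a more elementary, self-contained substitute for the partial-smoothness machinery.

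Your ``cleaner'' contradiction alternative, however, does not close the gap. If the row/column budgets $\alpha n,\alpha m$ strictly exceed the number of non-zeros of $S^\star$ in some row or column, the selector will for every $k$ pick off-support entries of the $S$-block of $W_k$ that are non-zero yet tend to zero; along such a subsequence you only learn that the corresponding limiting entry of $W^\star$ vanishes, and a vanishing entry can legitimately be selected by the projection at $W^\star$ (a tie at zero), so the inclusion $Y^\star\in\proj_{\calY}(W^\star)$ is not contradicted even though $S_k\notin\calS$ for every $k$. Hence the degenerate case cannot be excluded by a limiting argument alone, and only the first of your two remedies (assuming nondegeneracy) is sound. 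This leaves you on exactly the same footing as the paper, which silently inherits the same hypothesis from the cited identification theorems.
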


Let $S^\star$ be the point that $S_{k}$ converges to, the above result simply means that after finite number of iterations, $\supp(S_{k}) = \supp(S^\star)$ holds for all $k$ large enough.  

%, we have that 
%\[
%\Xsol \in \Mxsol \qandq
%\Ysol \in \Mysol  .
%\]
%%
%For algorithm \eqref{eq:apg}, remove the inertial points, and rewrite the iteration as
%\[
%\begin{aligned}
%\Xk &= \proj_{\calX}(\Yk)  , \\
%\Ykp &= \proj_{\calY}\Pa{(1 - \gamma)\Yk + \gamma \Xk }    .
%\end{aligned}
%\]
%For the iterates, 
%\begin{itemize}
%\item $\Yk$ is always feasible, meaning that $\Yk \in \Mysol$ for all $k \in \bbN$. 
%\item $\Xk$ has finite identification, given that the sparsity constraint set is piecewise polyhedral.
%\end{itemize}

\textbf{Local linear convergence.}~~ 
Given a critical point $\Ysol$, let $\Xsol = \proj_{\calX}(\Ysol)$, we have 
\[
\Xsol \in \calX 
\qandq
S^\star \in \calS ,~~~ L^\star \in \calY_{L}  .
\]
Note that the first two sets, $\calX, \calS$ are (affine) subspaces, hence smooth, and $\calY_{L}$ is the set of fixed-rank matrices which is $C^2$-smooth manifold \citep{lee2003smooth}. 
To derive the local linear rate, we need to utilize the smoothness of these sets. 
Let $\calM$ be a $C^2$-smooth manifold and let $\tanSp{\calM}{X}$ the tangent space of $\calM$ at $X \in \calM$, we have the following lemma which is crucial for our local linear convergence analysis. 

\begin{lemma}[{\citep[Lemma 5.1]{liang2014local}}]\label{lem:proj-M}
Let $\calM$ be a $C^2$-smooth manifold around $X$. Then for any $X' \in \calM \cap \calN$, where $\calN$ is a neighbourhood of $X$, the projection operator $\proj_{\calM}(X')$ is uniquely valued and $C^1$ around $X$, and thus $X' - X = \proj_{\tanSp{\calM}{X}}(X'-X) + o\pa{\norm{X'-X}}$. 
%\[
%X' - X = \proj_{\tanSp{\calM}{X}}(X'-X) + o\pa{\norm{X'-X}} .
%\]
If moreover, $\calM = X + \tanSp{\calM}{X}$ is an affine subspace, then $X' - X = \proj_{\tanSp{\calM}{X}}(X'-X)$.
\end{lemma}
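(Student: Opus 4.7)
The plan is to reduce everything to the standard local graph representation of a $C^{2}$ submanifold, from which the first-order expansion falls out of a short Taylor estimate. I would start by setting $T \eqdef \tanSp{\calM}{X}$, splitting the ambient space as $T \oplus T^{\perp}$, and invoking the fact that around $X$ the manifold is the graph of some $C^{2}$ map $h : U \subset T \to T^{\perp}$ with $h(0) = 0$ and, crucially, $Dh(0) = 0$; the latter is forced by the definition of tangent space, since every velocity vector in $T$ must remain in $T$ when pushed forward through $\psi(t) \eqdef X + t + h(t)$, whereas $Dh(0)T \subset T^{\perp}$, forcing $Dh(0) = 0$.

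Once this representation is in hand, the first-order expansion is immediate. For $X' \in \calM \cap \calN$ near $X$, I would write $X' = X + t + h(t)$ with $t \in T$; orthogonality $h(t) \in T^{\perp}$ gives $\proj_{T}(X' - X) = t$ and $\norm{X' - X}^{2} = \norm{t}^{2} + \norm{h(t)}^{2}$, so in particular $\norm{t} \leq \norm{X' - X}$. The Taylor expansion of $h$ at $0$ combined with $h(0) = 0$ and $Dh(0) = 0$ yields $\norm{h(t)} = O(\norm{t}^{2})$, and combining the two,
\[
\norm{X' - X - \proj_{T}(X' - X)} = \norm{h(t)} = O\pa{\norm{X' - X}^{2}} = o\pa{\norm{X' - X}}  ,
\]
which is the claimed expansion. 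The affine case $\calM = X + T$ is trivial: one may take $h \equiv 0$ so that the remainder vanishes and the equality is exact.

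It remains to establish that $\proj_{\calM}$ is uniquely valued and $C^{1}$ around $X$, which is essentially the tubular-neighbourhood theorem for $C^{2}$ submanifolds. My approach would be to apply the implicit function theorem to the first-order optimality condition $D\psi(t)^{\top}\pa{X' - \psi(t)} = 0$ at the point $(X', t) = (X, 0)$. Since $Dh(0) = 0$, one checks that $D\psi(0)^{\top} D\psi(0) = \Id_{T}$, so the derivative of the optimality system in $t$ at $(X, 0)$ equals $-\Id_{T}$, which is invertible. The implicit function theorem then delivers a unique $C^{1}$ map $X' \mapsto t(X')$, from which $\proj_{\calM}(X') = \psi(t(X'))$ inherits $C^{1}$ regularity. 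The main piece of bookkeeping, and the only mild obstacle, is shrinking $\calN$ enough so that the implicit solution $t(X')$ stays inside the domain $U$ of the graph chart and so that the minimizer in $\calM$ of $\norm{\cdot - X'}$ is indeed given by this stationary point rather than a spurious one; both are handled by continuity and compactness arguments on a sufficiently small ball. The entire proof hinges on the tangential flatness $Dh(0) = 0$, after which all remaining work is routine calculus.
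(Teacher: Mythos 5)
The paper does not actually prove this lemma: it is imported wholesale by citation from \citep[Lemma~5.1]{liang2014local}, so there is no internal argument to compare yours against. Judged on its own, your proof is correct and is the standard self-contained derivation. The graph representation $X' = X + t + h(t)$ with $h$ a $C^2$ map into $T^{\perp}$ satisfying $h(0)=0$, $Dh(0)=0$ is exactly the right normalization, and from it the expansion is immediate: $\proj_{\tanSp{\calM}{X}}(X'-X)=t$ by orthogonality, $\norm{t}\leq\norm{X'-X}$, and $\norm{h(t)}=O(\norm{t}^2)=o(\norm{X'-X})$ (indeed $h(t)=o(\norm{t})$ from $Dh(0)=0$ alone already suffices for the stated remainder). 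Your justification that $Dh(0)=0$ is forced because $Dh(0)$ maps into $T^{\perp}$ while tangent vectors must lie in $T$ is the key observation and is argued correctly. The uniqueness and $C^1$ regularity of $\proj_{\calM}$ via the implicit function theorem applied to the stationarity system $D\psi(t)^{\top}(X'-\psi(t))=0$, with Jacobian $-\Id_{T}$ at $(X,0)$, is likewise sound; you correctly flag the one genuine piece of care required, namely shrinking the neighbourhood so that the unique stationary point is the global nearest point on $\calM$ (this is where a compactness argument, or equivalently the tubular neighbourhood theorem, is needed, and it is the only step that is not pure calculus). The affine case is indeed trivial with $h\equiv 0$. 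In short: the paper buys this lemma by citation; you have supplied a complete and correct proof of it.
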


Denote the tangent spaces of $\calX, \calY$ at $\Xsol, \Ysol$ as $T_{\calX}^{\Xsol}$ and $T_{\calY}^{\Ysol}$, respectively. We refer to the supplementary material for detailed expressions of these tangent spaces. 
Denote $\proj_{T_{\calX}^{\Xsol}}$ and $\proj_{T_{\calY}^{\Ysol}}$ the projections onto the tangent spaces. Define the matrix $\calP \eqdef  \proj_{T_{\calY}^{\Ysol}} \Pa{(1 - \gamma)\Id + \gamma \proj_{T_{\calX}^{\Xsol}} } \proj_{T_{\calY}^{\Ysol}} $, and 
%\[
%\calP \eqdef  \proj_{T_{\calY}^{\Ysol}} \Pa{(1 - \gamma)\Id + \gamma \proj_{T_{\calX}^{\Xsol}} } \proj_{T_{\calY}^{\Ysol}}  .
%\]
%Let $a \in [0, 1]$, and define 
\[
D_{k} \eqdef \begin{pmatrix} \Yk - Y^\star \\ \Ykm - Y^\star \end{pmatrix}
\qandq
\calQ
\eqdef \begin{bmatrix} (1+a)\calP & -a\calP \\ \Id & 0  \end{bmatrix} \enskip \textrm{with} \enskip a \in [0, 1]  .
\]
Denote $\rho_{_{\calP}}, \rho_{_{\calQ}}$ the spectral radiuses of $\calP, \calQ$, respectively.

\begin{theorem}[Local linear convergence]\label{thm:local-rate}
For Algorithm \ref{alg:apg}, suppose Theorem \ref{thm:supp-iden} holds. Then $\Yk$ converges to a critical point $Y^\star$ of \eqref{eq:BP-ME}. 
Suppose $\bk=\ak\equiv a \in [0, 1]$, there exists a $K > 0$ such that for all $k \geq K$, 
\beqn% \label{eq:supp-iden}
D_{k+1}
= \calQ D_{k} + o(\norm{D_{k}})  .
\eeqn
Moreover, if $\rho_{_{\calP}} < 1$, then so is $\rho_{_{\calQ}}$, and for all $k$ large enough we have $\norm{\Yk-\Ysol} = O(\rho_{_{\calQ}}^k)$. 
%\[
%\norm{\Yk-\Ysol} = O(\rho_{_{\calQ}}^k)  .
%\]
\end{theorem}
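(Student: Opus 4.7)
The plan is to use the support/rank identification of Theorem~\ref{thm:supp-iden} to place the iterates on a $C^2$-smooth manifold, first-order linearize both projections via Lemma~\ref{lem:proj-M} to derive the recursion $D_{k+1}=\calQ D_k+o(\norm{D_k})$, and finally read off the asymptotic rate from a spectral analysis of $\calQ$.

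\emph{Step 1 — manifold structure.} By Theorem~\ref{thm:supp-iden}, for all $k$ large we have $S_{k}\in\calS$, and since $L_{k}\to L^\star$ inside the fixed-rank set $\calY_L$, also $\rank(L_{k})=\rank(L^\star)$. Hence $\Yk$ lies on the product manifold $\calM \eqdef \calS\times\calY_L$, which is $C^2$-smooth around $\Ysol$ with tangent space $T_{\calY}^{\Ysol}=\calS\times T_{\calY_L}^{L^\star}$. By Lemma~\ref{lem:proj-M}, $\proj_\calY$ is $C^1$ on a neighbourhood of $\Ysol$ with derivative $\proj_{T_{\calY}^{\Ysol}}$ at $\Ysol$; moreover $\calX$ is affine, so $\proj_\calX(\cdot)-\Xsol = \proj_{T_{\calX}^{\Xsol}}(\cdot-\Ysol)$ holds exactly.

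\emph{Step 2 — linearization of the iteration.} With $\ak=\bk\equiv a$, set $Z_k\eqdef\Yk+a(\Yk-\Ykm)$, so that \iref{eq:apg} reduces to $\Ykp=\proj_\calY\bigl((1-\gamma)Z_k+\gamma\proj_\calX(Z_k)\bigr)$, while the fixed-point identity reads $\Ysol=\proj_\calY\bigl((1-\gamma)\Ysol+\gamma \Xsol\bigr)$. Because $\Yk,\Ykm\in\calM$ for $k$ large, Lemma~\ref{lem:proj-M} gives $\Yk-\Ysol=\proj_{T_{\calY}^{\Ysol}}(\Yk-\Ysol)+o(\norm{\Yk-\Ysol})$ and likewise for $\Ykm$, so
\[
Z_k-\Ysol \;=\; \proj_{T_{\calY}^{\Ysol}}\bigl((1+a)(\Yk-\Ysol)-a(\Ykm-\Ysol)\bigr)+o(\norm{D_k}).
\]
Affinity of $\calX$ then yields $(1-\gamma)(Z_k-\Ysol)+\gamma(\proj_\calX(Z_k)-\Xsol)=((1-\gamma)\Id+\gamma\proj_{T_{\calX}^{\Xsol}})(Z_k-\Ysol)$. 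Applying the $C^1$-linearization of $\proj_\calY$ at $\Ysol$ once more and composing the two linear maps produces
\[
\Ykp-\Ysol \;=\; \calP\bigl((1+a)(\Yk-\Ysol)-a(\Ykm-\Ysol)\bigr)+o(\norm{D_k}),
\]
which in block form is precisely $D_{k+1}=\calQ D_k+o(\norm{D_k})$, as claimed.

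\emph{Step 3 — spectral radius and rate.} Each eigenvalue $\mu$ of $\calQ$ is paired with an eigenvalue $\lambda$ of $\calP$ through the quadratic $\mu^2-(1+a)\lambda\mu+a\lambda=0$ on the $\calP$-invariant block $\{(\mu v,v)^\top\}$. For $\gamma\in(0,1]$ the middle operator $(1-\gamma)\Id+\gamma\proj_{T_{\calX}^{\Xsol}}$ is positive semidefinite with spectrum in $[1-\gamma,1]$, so $\calP$ is self-adjoint with spectrum in $[0,\rho_{_{\calP}}]\subset[0,1)$; the Schur–Cohn conditions $|a\lambda|<1$ and $(1+a)\lambda<1+a\lambda\Leftrightarrow\lambda<1$ then both hold, giving $\rho_{_{\calQ}}<1$. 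Once this is established, pick $\rho\in(\rho_{_{\calQ}},1)$ and invoke Gelfand's formula to obtain an equivalent norm in which $\|\calQ\|\le\rho$; for $k$ large the $o(\norm{D_k})$ remainder is bounded by $\tfrac{1-\rho}{2}\norm{D_k}$, hence $\norm{D_{k+1}}\le\tfrac{1+\rho}{2}\norm{D_k}$, and iterating yields $\norm{\Yk-\Ysol}=O(\rho^k)$ for every $\rho>\rho_{_{\calQ}}$, which is what the statement $O(\rho_{_{\calQ}}^k)$ means here.

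\emph{Main obstacle.} The most delicate point is the implication $\rho_{_{\calP}}<1\Rightarrow\rho_{_{\calQ}}<1$: when $\gamma\in(1,2]$ the middle operator may be indefinite and $\calP$ can carry negative eigenvalues close to $-1$, in which case the Schur–Cohn inequality $|(1+a)\lambda|<1+a\lambda$ fails for $a$ bounded away from $0$. Handling this regime requires either restricting $\gamma$, tuning the inertial parameter $a$ accordingly, or exploiting the explicit product structure of $T_{\calY}^{\Ysol}$ and $T_{\calX}^{\Xsol}$. Everything else is a careful but standard combination of Lemma~\ref{lem:proj-M} with a Gronwall-type passage from the linearized recursion to the asymptotic rate.
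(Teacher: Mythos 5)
Your derivation of the recursion $D_{k+1}=\calQ D_k+o(\norm{D_k})$ follows essentially the same route as the paper: identification places the iterates on the product manifold, $\proj_{\calX}$ is linearized exactly by affinity, $\proj_{\calY}$ is linearized to first order via Lemma~\ref{lem:proj-M}, and the inertial term is absorbed into the companion block $\calQ$. Where you genuinely add something is Step~3: the paper simply asserts that $\calP$ is symmetric positive semi-definite with spectrum in $[0,1]$ and then defers the implication $\rho_{_{\calP}}<1\Rightarrow\rho_{_{\calQ}}<1$ to \citep[Chapter 6]{liang2016convergence}, whereas you make the spectral correspondence explicit ($\mu^2-(1+a)\lambda\mu+a\lambda=0$ on the invariant blocks, which is correct since $\calP$ is symmetric hence diagonalizable) and verify the Schur--Cohn conditions directly. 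Your Gelfand-norm passage from the linearized recursion to the $O(\rho^k)$ rate is also a correct and more self-contained replacement for the citation.

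The ``main obstacle'' you flag is real, but it is a gap in the paper's own assertion rather than a defect specific to your argument: $(1-\gamma)\Id+\gamma\proj_{T_{\calX}^{\Xsol}}$ has the eigenvalue $1-\gamma$, so $\calP$ is guaranteed positive semi-definite only for $\gamma\le 1$, while the algorithm permits $\gamma\in\,]0,2]$. For $\gamma\in(1,2]$ the matrix $\calP$ may carry eigenvalues in $[1-\gamma,0)$, and your computation correctly shows that a negative eigenvalue $\lambda$ with $\abs{\lambda}\ge 1/(1+2a)$ pushes a root of the quadratic outside the unit disc (e.g.\ $a=1$, $\lambda=-1/2$ gives $\mu=(-1-\sqrt{3})/2$), so the bare hypothesis $\rho_{_{\calP}}<1$ does not suffice there. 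Under the paper's implicit assumption that $\calP$ is positive semi-definite your proof is complete; to cover $\gamma>1$ one must either restrict $\gamma$, strengthen the hypothesis to $\lambda_{\min}(\calP)>-1/(1+2a)$, or argue from the geometry of $T_{\calX}^{\Xsol}$ and $T_{\calY}^{\Ysol}$. One minor additional remark: your claim that $\rank(L_k)=\rank(L^\star)$ for large $k$ does not follow merely from $L_k\to L^\star$ (rank is only lower semi-continuous); it is the partial-smoothness identification of Theorem~\ref{thm:supp-iden} that puts $L_k$ on the fixed-rank manifold, exactly as the paper also tacitly assumes.
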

\begin{remark} $~$
\begin{itemize}[leftmargin=2em]
\item 
If $T_{\calX}^{\Xsol} \cap T_{\calY}^{\Ysol} = \ba{0}$, then it can be shown that $\rho_{_{\calP}} < 1$. 
\item
Given $\rho_{_{\calP}}$, $\rho_{_{\calQ}}$ can be expressed explicitly in terms of $a$ and $\rho_{_{\calP}}$. 
For the case that $\ak \to a \in [0, 1]$ and $\bk \to b \in [0, 1]$, we refer to \citep[Chapter 6]{liang2016convergence} for detailed discussion on the local linear convergence analysis. 
\end{itemize}
\end{remark}
An numerical illustration on our theoretical rate estimation and practical observation is provided in the supplementary material Section \ref{sec:proof-local}-Figure \ref{fig:rate}. 
 \begin{figure}[!ht]
    \centering
    \begin{minipage}{0.32\textwidth}
    \includegraphics[width=\textwidth]{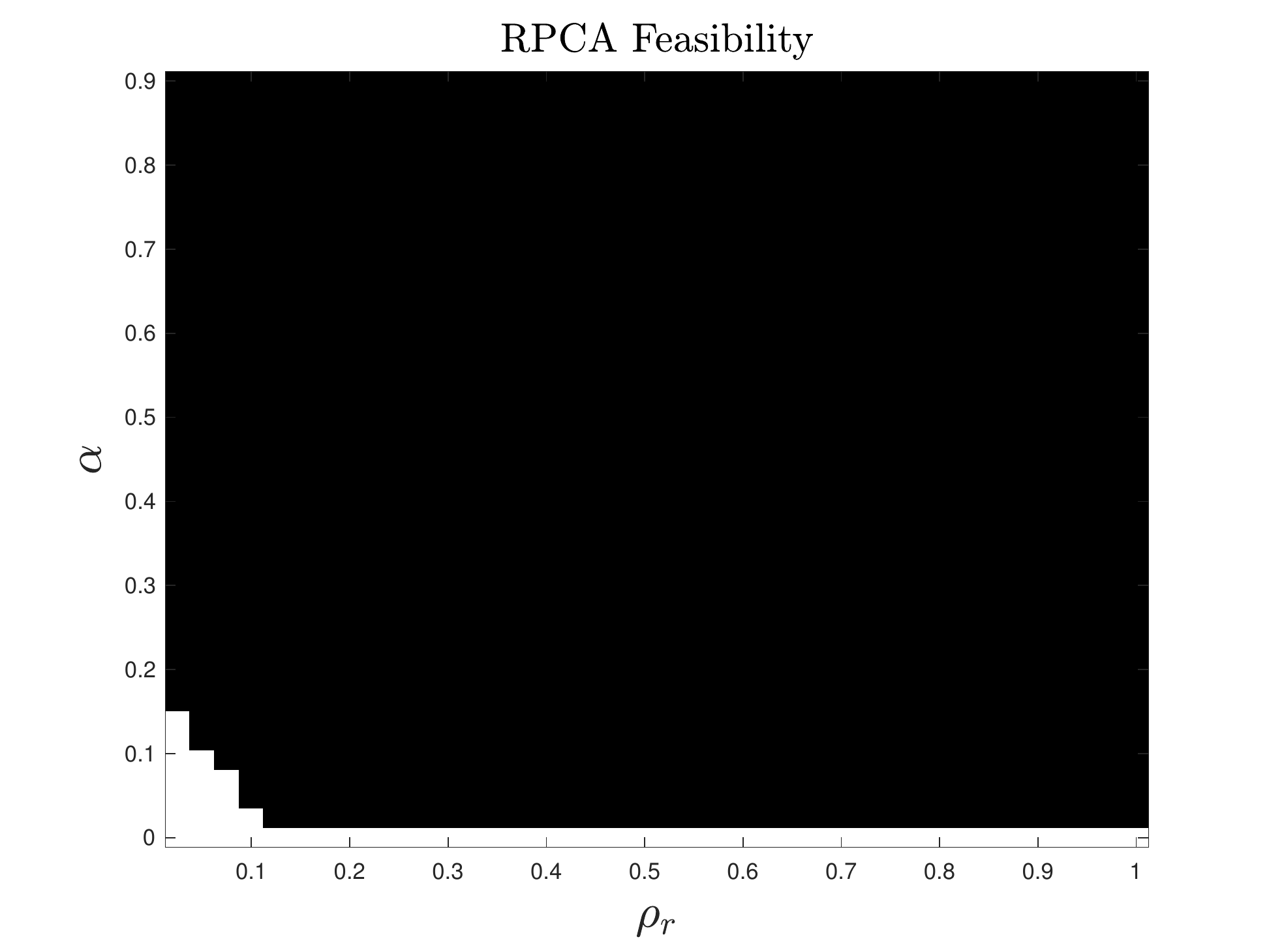}
      \end{minipage}
    \begin{minipage}{0.32\textwidth}
    \includegraphics[width = \textwidth]{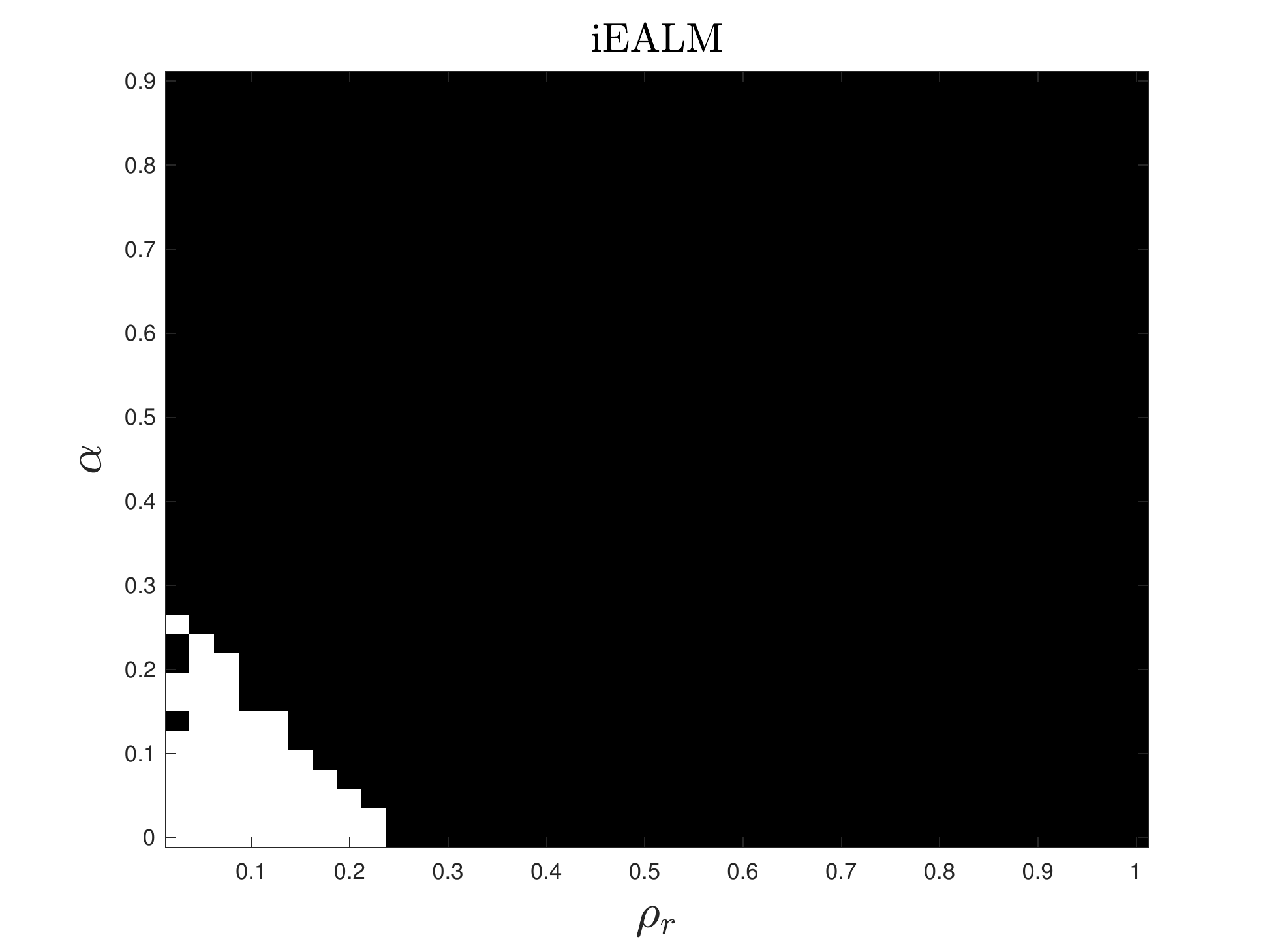}
  \end{minipage} 
   \begin{minipage}{0.32\textwidth}
    \includegraphics[width = \textwidth]{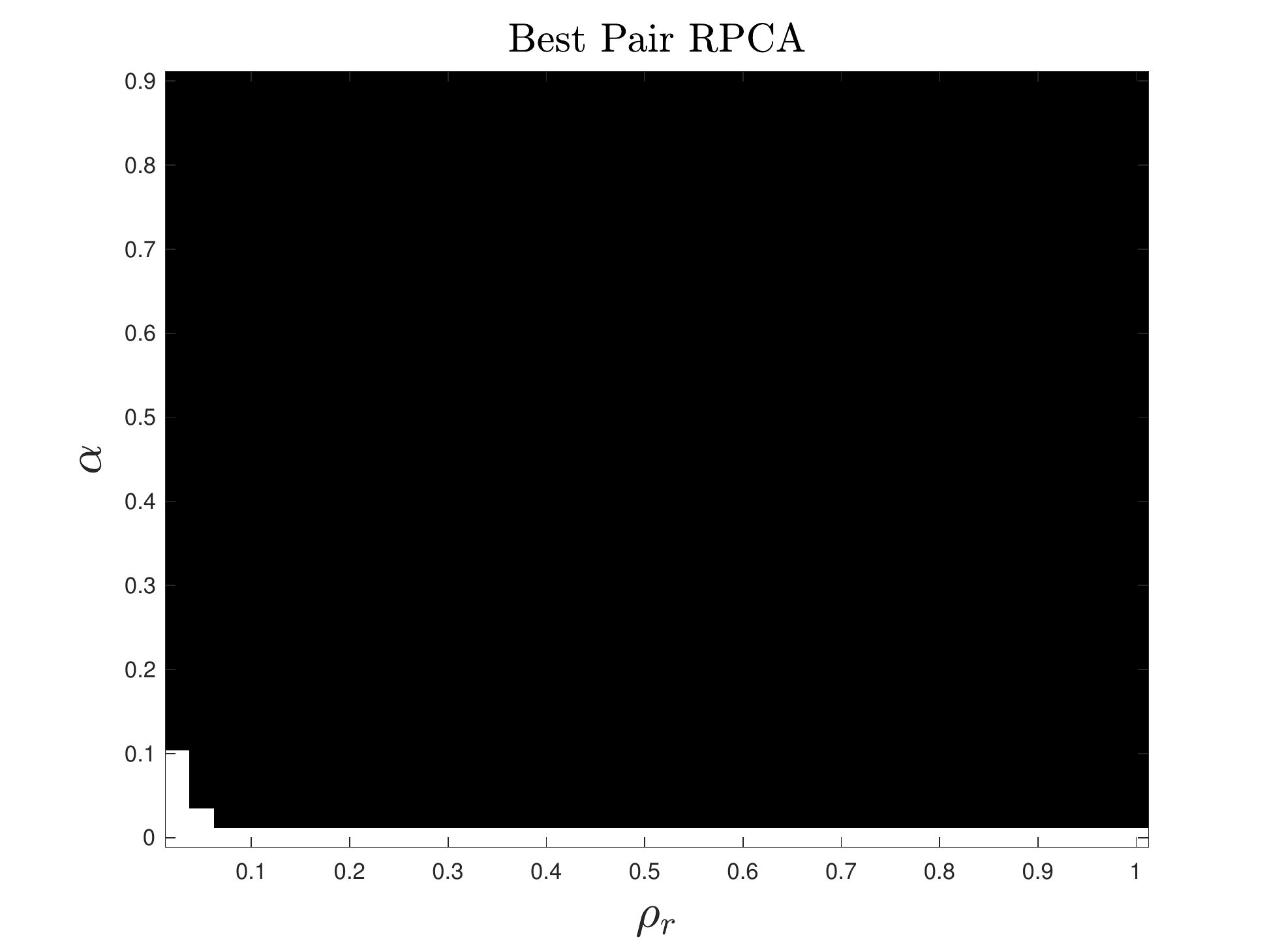}
  \end{minipage} 
\vspace{-0.1in}
\caption{\small{Phase transition diagram for RPCA F, iEALM, and APG with respect to rank and error sparsity. Here, $\rho_r={\rm rank}(L)/m$ and $\alpha$ is the sparsity measure. We have $(\rho_r,\alpha)\in (0.025,1]\times(0,1)$ with $r=5:5:200$ and $\alpha = {\tt linspace}(0,0.99,40)$. We perform 5 runs of each algorithm.}}
    \label{syntheticdata1}
\end{figure}

\vspace{-10pt}
\section{Numerical experiments}
In this section, we extensively tested our best-pair formulation on both real and synthetic data against a vast genre of PCP algorithms. The first set of algorithms that we tested against, e.g.\ iEALM and APG, determine the target rank and sparsity {\em robustly} from the given set of hyperparameters. On the other hand, for the second set of algorithms, e.g. RPCA gradient descent (RPCA GD),  Go decomposition (GoDec), and RPCA nonconvex feasibility (RPCA NCF), the target rank and sparsity are user-inferred. Although our accelerated proximal gradient algorithm  belongs to the second class, to show its effectiveness, we compare it with both classes of state-of-the-art robust PCP algorithms (see Table \ref{algo} in the supplementary material) on several computer vision applications---removal of shadows and specularities from face images, Background estimation or tracking from video sequences, and inlier detection from a grossly corrupted dataset~(see Section \ref{sec:inlier detection} in the supplementary material)\footnote{In all experiments, we use the approximate projection \citep{duttahanzely, RPCAgd, zhangpca} onto $\calY$ as the exact one is expensive: 
\begin{equation*}\label{eq:T_def}
 \mathcal{T}_{\alpha}[S] 
 \eqdef \{\proj_{\Omega_{\alpha}}(S)\in\mathbb{R}^{m\times n}: (i,j)\in\Omega_{\alpha}\;{\rm if}\;|S_{ij}|\ge|S_{(i,.)}^{(\alpha n)}|\;\;{\rm and}\; |S_{ij}|\ge|S_{(.,j)}^{(\alpha m)}|\}.
 \end{equation*} If the sparsity constraint was defined only along rowsc (or only columns), the exact projection would be cheap. However, the approximate projection produces better results, thus we stick with it.}. 

\paragraph{Results on synthetic data.}
The primary goal of these set of experiments is to understand the behavior of our proposed method on some well-understood data and to test against some state-of-the-art algorithms. To construct our test matrix $A$, for these experiments, we used the idea proposed by Wright et al.~\citep{APG}. First, we generate the low-rank matrix, $L$, as a product of two independent full-rank matrices of size $m\times r$ with $r<m$  such that elements are independent and identically distributed (i.i.d.) and sampled from a normal distribution---$\calN(0, 1)$. We generate the sparse matrix, $S$, such that its elements are chosen from the interval $[-500,500]$. We create the sparse support set by using the operator \eqref{eq:T_def}. Finally, we write $A$ as $A = L+S$. We fix $m=200$ and define $\rho_r={\rm rank}(L)/m$, where ${\rm rank}(L)$ varies. We choose the sparsity level $\alpha\in(0,1)$. 

{\em Phase transition experiments.} For each pair of $(\rho_r,\alpha)$, we apply iEALM,~RPCA NCF, and our algorithm to recover the pair $(\hat{L},\hat{S})$. 
For iEALM, we set $\lambda=1/\sqrt{m}$ and use $\mu=1.25/\|A\|_2$ and $\rho=1.5$, where $\|A\|_2$ is the spectral norm (maximum singular value) of $A$. For a given $\epsilon>0$, if the recovered matrix pair $(\hat{L},\hat{S})$, satisfies the relative error $\tfrac{\|A-\hat{L}-\hat{S}\|_F}{\|{A}\|_F}<\epsilon$ then we consider the construction is viable. In Figure \ref{syntheticdata1}, we produce the {\em phase transition diagrams} to show the fraction of perfect recovery of $A$, where white denotes {\it success} and black denotes {\it failure}. We run the experiments for 5 times and plot the results. The success of iEALM is approximately below the line $\rho_r+\alpha\approx 0.25.$ On the other hand, we note that the performance of our best pair RPCA is almost similar to that of \citep{duttahanzely}, when the sparsity level $\alpha$ is small and both approaches can efficiently provide a feasible reconstruction for any $\rho_r$ in that case. We also note that for low sparsity level, iEALM can only provide a feasible reconstruction for $\rho_r\le0.25$. Due to their robustness to any low-rank structure when $\alpha$ is low, RPCA NCF and best pair RPCA can be proved to be very effective in many real-world applications. In many real-world problems, involving the video/image data can ideally have any inherent low-rank structure and are generally corrupted by very sparse outliers of arbitrary large magnitudes. In those instances, RPCA NCF and our best pair RPCA could be very useful. We show more justification in the later section. 
\vspace{-1pt}

{\em Root mean square error measure.} To validate our performance against RPCA GD of Yi et al.\ \citep{RPCAgd}, we use a different metric---root mean square error (RMSE). Since RPCA GD does not explicitly recover a sparse matrix, $S$, it is unjustified to test it against the same relative error. Therefore, for the true low-rank,~$L$, and a low-rank recovery, $\hat{L}$, we use the metric $\nicefrac{\|L-\hat{L}\|_F}{\sqrt{mn}}$ as the measure of RMSE. From Figure \ref{syntheticdata2}, we can conclude that our best pair RPCA has less RMSE compare to that of RPCA GD. Moreover, the RMSE remains unaltered as the cardinality of support set,~$\Omega$ increases. Also, see Figure \ref{syntheticdata3} in the Appendix. 

\begin{figure}
    \centering
    \begin{minipage}{0.32\textwidth}
    \includegraphics[width=\textwidth]{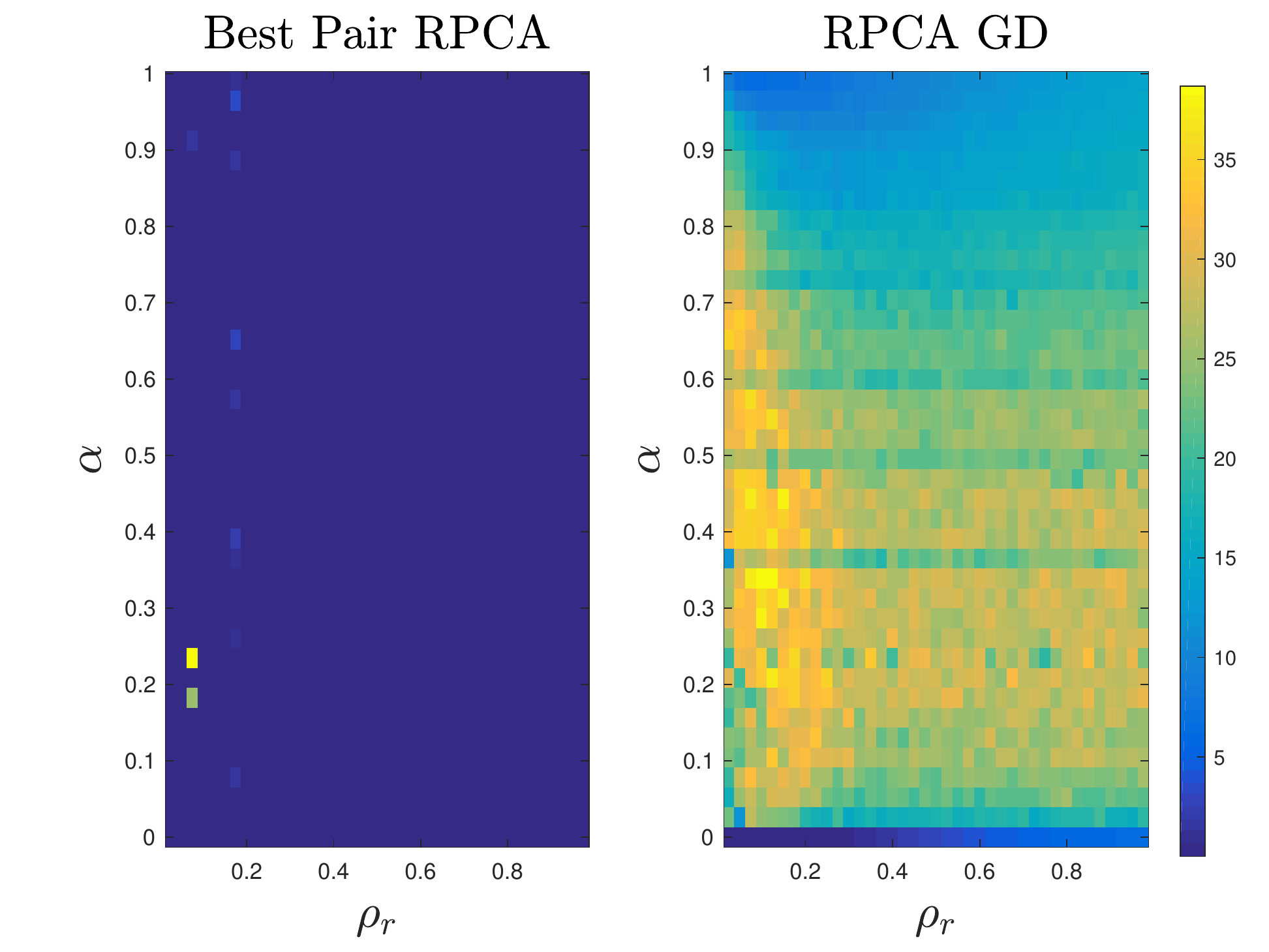}
      \end{minipage}
    \begin{minipage}{0.32\textwidth}
    \includegraphics[width = \textwidth]{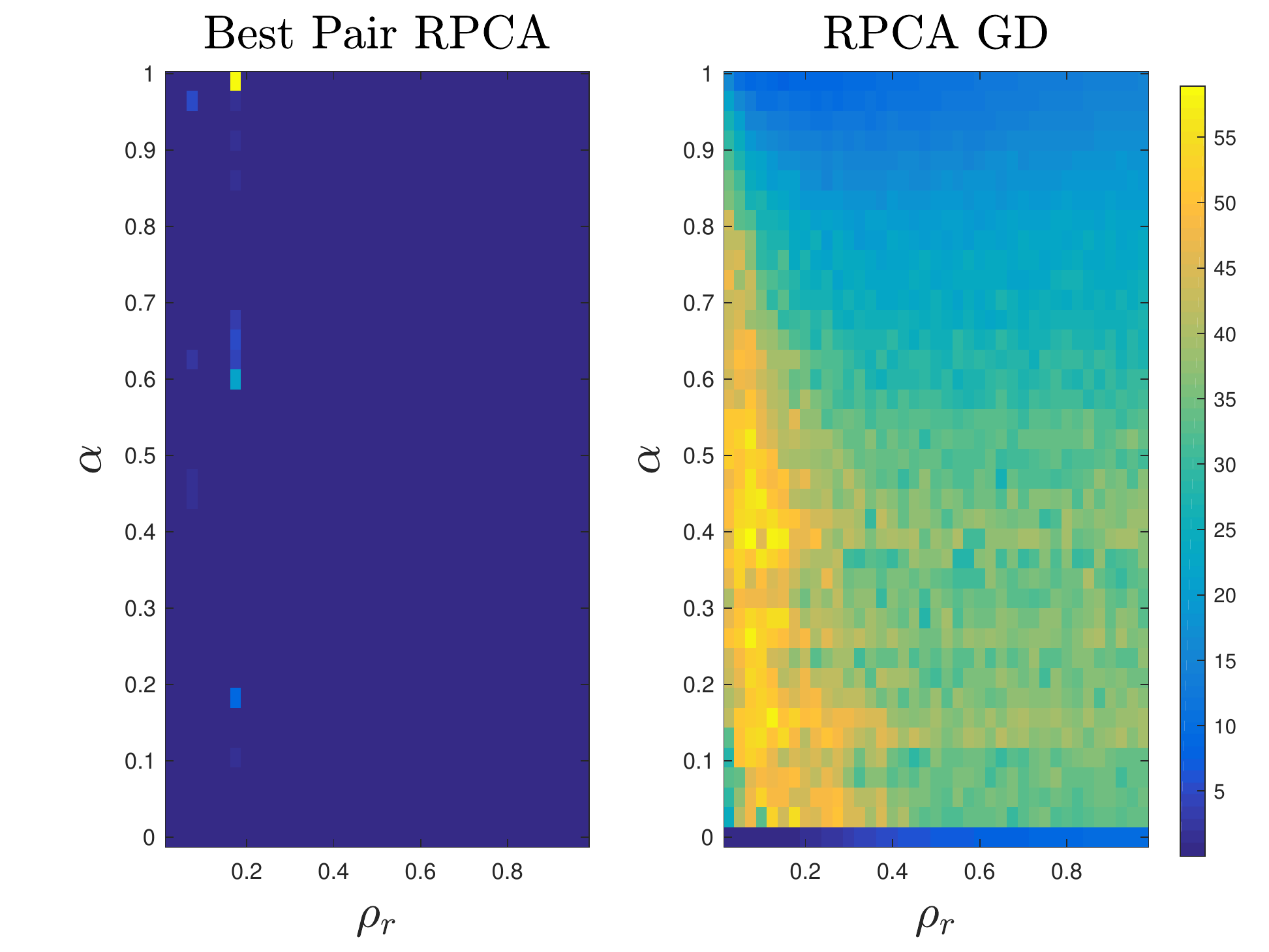}
  \end{minipage} 
   \begin{minipage}{0.32\textwidth}
   \includegraphics[width = \textwidth]{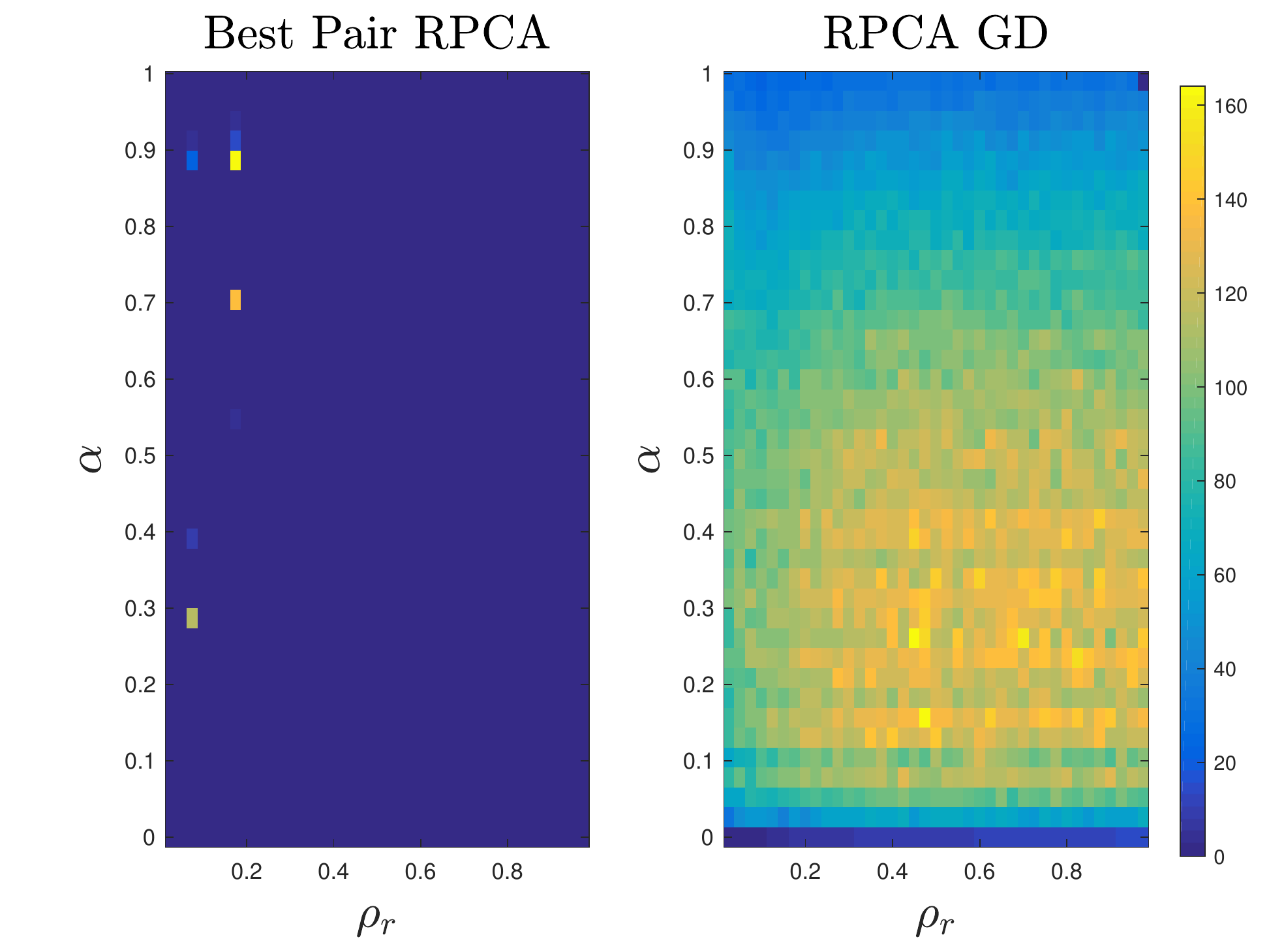}
  \end{minipage} 
\vspace{-0.1in}
\caption{\small{ RMSE to compare between RPCA GD and best-pair RPCA with respect to rank and error sparsity. We set $\rho_r={\rm rank}(L)/m$ and $\alpha$ is the sparsity measure. We have $(\rho_r,\alpha)\in (0.025,1]\times(0,1)$ with $r=5:5:200$ and $\alpha = {\tt linspace}(0,0.99,40)$.}}
    \label{syntheticdata2}
\end{figure}

\vspace{-5pt}
\paragraph{Removal of shadows and specularities.}
Set of images of an object under unknown pose and arbitrary lighting conditions, form a convex cone in the space of all possible images which may have {\em unbounded dimension} \citep{basri_jacobs,belhumeur1998set}. However, the images under distant, isotropic lighting can be approximated by a 9-dimensional linear subspace which is popularly referred to as the {\em harmonic plane}. We used three subjects {\tt B11,B12}, and {\tt B13} from the {\tt Extended Yale Face Database} \citep{yale_face} for our simulations. We used 63  downsampled images of resolution of $120\times 160$ of each subject. For APG and iEALM, we set the parameters the same as in the previous section. For RPCA GD, RPCA NCF, and our method, we set target rank $r = 9$ and sparsity level to $0.1$.~The qualitative analysis on the recovered images from Figure \ref{shadow_removal} shows while RPCA GD recovers patchy and granular face images, our best pair reformulation provides comparable reconstruction to that of iEALM, APG, and RPCA NCF. 
\begin{figure}
    \centering
    \includegraphics[width = \textwidth]{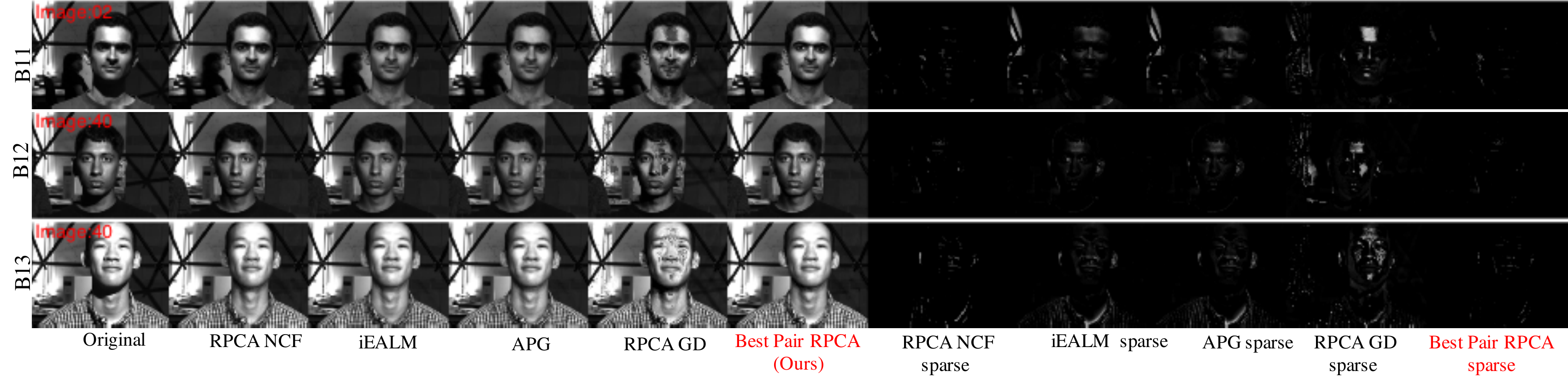}
    \vspace{-7pt}
\caption{\small{Shadow and specularities removal from face images captured under varying illumination and camera position. Our feasibility approach provides comparable reconstruction to that of iEALM, APG, and RPCA NCF.}}
    \label{shadow_removal}
\end{figure}
\begin{figure}
    \centering
    \includegraphics[width = \textwidth]{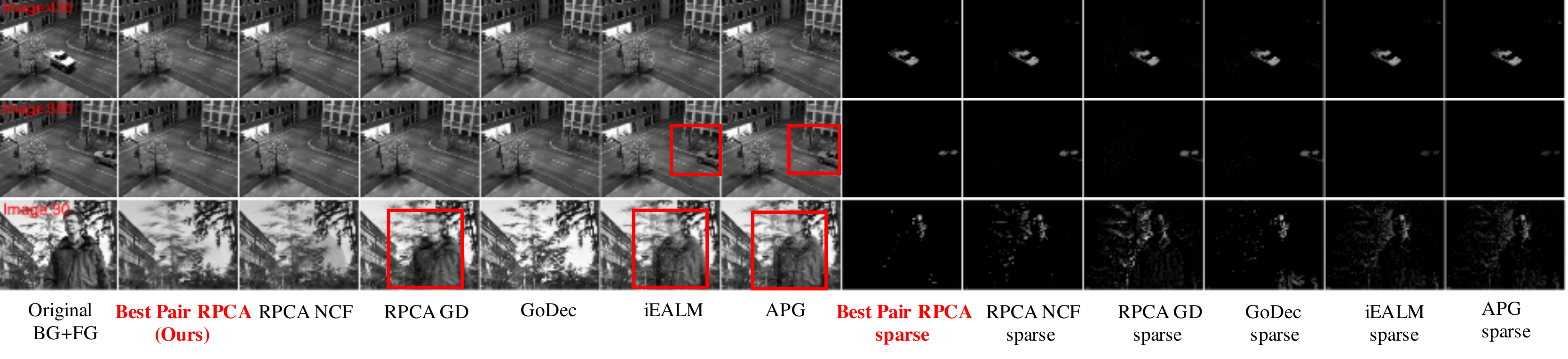}
    \vspace{-5pt}
    \caption{\small{Background estimation from video sequences. Except Best pair RPCA, RPCA NCF, and GoDEc all other methods struggle to remove the static foreground object. }}
    \label{BG_full}
\end{figure}
\vspace{-5pt}
\paragraph{Background estimation from video sequences.}
Background estimation or moving object tracking \citep{boumans_taxonomy,dutta_thesis, Bouwmans2016,dutta2017weighted, Bouwmans201431,inWLR,duttalirichtarik_modeling,dutta_online,duttali_bg} is considered as one of the classic problems in computer vision and is used as a crucial component in human activity recognition, tracking, and video analysis from surveillance cameras. When the video is captured by a static camera, minimizing the rank of the matrix $A\in\mathbb{R}^{m\times n}$, that concatenates $n$ video frames (after converting them into vectors) represents the structure of the linear subspace, $L$, that contains the background and an error, $S$, that emphasizes the foreground components. However, the exact desired rank is often tuned empirically, as the ideal rank-one background is often unrealistic as the changing illumination, occluded foreground/background objects, reflection, and noise are typically also a part of the video frames. Based on the above observation, we note that the problem can be cast typically as \eqref{rpca}. However, as we explained in some cases, when the target rank and the sparsity level is user-inferred hyperparameters, one might use a different approach as in \citep{godec,duttahanzely,RPCAgd} as well. Additionally, there might be missing/unobserved pixels in the video and that makes the problem more complex and only a few methods, such as RPCA NCF, GRASTA \citep{grasta}, RPCA GD remedy to that. Therefore, we tested our best pair RPCA to a wide range of methods. In our experiments, we use {\em two} different video sequences:~(i)~the {\tt Basic} sequence from Stuttgart synthetic dataset~\citep{cvpr11brutzer}, (ii)~the {\tt waving tree} sequence \citep{wallflower}. We extensively use the Stuttgart video sequence as it is a challenging sequence that comprises both static and dynamic foreground objects and varying illumination in the background. Additionally, it comes with foreground ground truth for each frame. For iEALM and APG, we set the parameters the same as in the previous sections. For Best pair RPCA, RPCA GD, RPCA NCF, and GoDec, we set $r=2$, target sparsity 10\% and additionally, for GoDec, we set $q=2$. For GRATSA, we set the parameters the same as those mentioned in the authors’ website \citep{grastacode}. The qualitative analysis on the background and foreground recovered on both, full observation (in Figure \ref{BG_full}) and partial observation (in Figure \ref{Bg_subsampled}), suggest that our method recovers a visually better quality background and foreground compare to the other methods. Note that, RPCA GD recovers a fragmentary foreground with more false positives compare to our method; moreover, RPCA GD, GRASTA, iEALM, and APG cannot remove the static foreground object. We provide a detailed quantitative evaluation of our best pair RPCA with respect to the $\epsilon$-proximity metric--$d_{\epsilon}(X,Y)$ as in \citep{duttahanzely} and the mean structural similarity index measure (SSIM) by \citep{mssim} in recovering the foreground objects in Figures \ref{qunatitative_BG} and \ref{qunatitative_BG_1} in Appendix. 
 \begin{figure}[!ht]
    \centering
    \includegraphics[width = \textwidth]{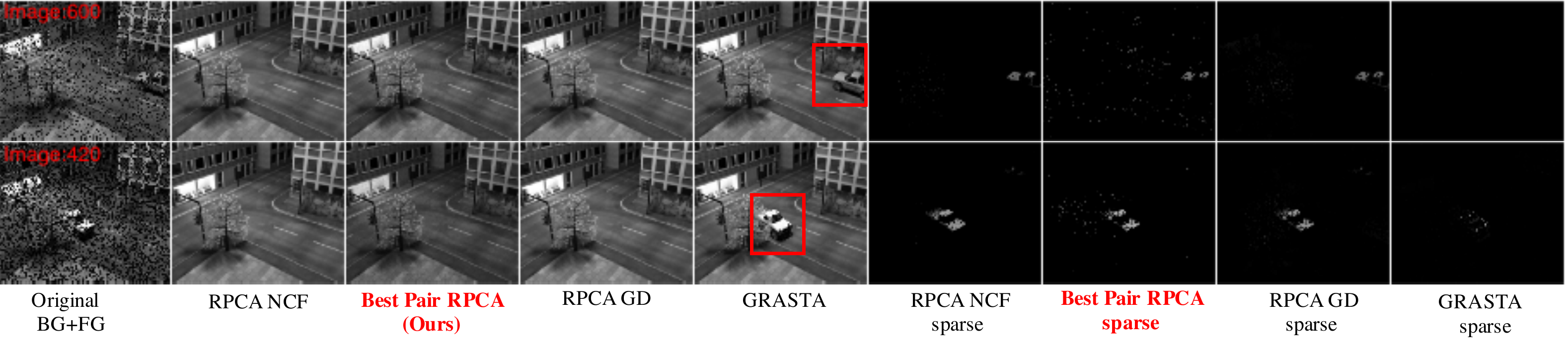}
    \vspace{-7pt}
    \caption{\small{Background estimation on subsampled Stuttgart  {\tt Basic} video sequence. We use $\Omega = 0.9(m.n)$ and $\Omega = 0.8(m.n)$, respectively.}}
    \label{Bg_subsampled}
\end{figure}

%%%%%%%%%%%%%%%%%%%%%%%%%%%%%%%%%%%%
%\bibliographystyle{plain}{
%\small
%\bibliography{references}}

\bibliographystyle{plainnat}
{
	\small
	\bibliography{references}}

\begin{thebibliography}{58}
\providecommand{\natexlab}[1]{#1}
\providecommand{\url}[1]{\texttt{#1}}
\expandafter\ifx\csname urlstyle\endcsname\relax
  \providecommand{\doi}[1]{doi: #1}\else
  \providecommand{\doi}{doi: \begingroup \urlstyle{rm}\Url}\fi

\bibitem[gra(2012)]{grastacode}
2012.
\newblock https://sites.google.com/site/hejunzz/grasta.

\bibitem[Basri and Jacobs(2003)]{basri_jacobs}
R.~Basri and D.~Jacobs.
\newblock Lambertian reflection and linear subspaces.
\newblock \emph{IEEE Transaction on Pattern Analysis and Machine Intelligence},
  25\penalty0 (3):\penalty0 218--233, 2003.

\bibitem[Bauschke and Combettes(2011)]{bauschke2011convex}
H.~H. Bauschke and P.~L. Combettes.
\newblock \emph{Convex analysis and monotone operator theory in Hilbert
  spaces}.
\newblock Springer, 2011.

\bibitem[Belhumeur and Kriegman(1998)]{belhumeur1998set}
P.~N. Belhumeur and D.~J. Kriegman.
\newblock What is the set of images of an object under all possible
  illumination conditions?
\newblock \emph{International Journal of Computer Vision}, 28\penalty0
  (3):\penalty0 245--260, 1998.

\bibitem[Bolte et~al.(2007)Bolte, Daniilidis, and Lewis]{Bolte07}
J.~Bolte, A.~Daniilidis, and A.~Lewis.
\newblock The {{\L}}ojasiewicz inequality for nonsmooth subanalytic functions
  with applications to subgradient dynamical systems.
\newblock \emph{SIAM Journal on Optimization}, 17\penalty0 (4):\penalty0
  1205--1223, 2007.

\bibitem[Bolte et~al.(2010)Bolte, Daniilidis, Ley, and
  Mazet]{bolte2010characterizations}
J.~Bolte, A.~Daniilidis, O.~Ley, and L.~Mazet.
\newblock Characterizations of {{\L}}ojasiewicz inequalities: subgradient
  flows, talweg, convexity.
\newblock \emph{Transactions of the American Mathematical Society},
  362\penalty0 (6):\penalty0 3319--3363, 2010.

\bibitem[Bouwmans(2014)]{Bouwmans201431}
T.~Bouwmans.
\newblock Traditional and recent approaches in background mode%\vspaceg for
  foreground detection: An overview.
\newblock \emph{Computer Science Review}, 11--12:\penalty0 31 -- 66, 2014.

\bibitem[Bouwmans and Zahzah(2014)]{rpca_methods}
T.~Bouwmans and E.-H. Zahzah.
\newblock Robust {P}{C}{A} via principal component pursuit: {A} review for a
  comparative evaluation in video surveillance.
\newblock \emph{Computer Vision and Image Understanding}, 122:\penalty0 22--34,
  2014.

\bibitem[Bouwmans et~al.(2016)Bouwmans, Sobral, Javed, Jung, and
  Zahzah]{Bouwmans2016}
T.~Bouwmans, A.~Sobral, S.~Javed, S.~K. Jung, and E.-H. Zahzah.
\newblock Decomposition into low-rank plus additive matrices for
  background/foreground separation: A review for a comparative evaluation with
  a large-scale dataset.
\newblock \emph{Computer Science Review}, 2016.

\bibitem[Bouwmans et~al.(2017)Bouwmans, Maddalena, and
  Petrosino]{boumans_taxonomy}
T.~Bouwmans, L.~Maddalena, and A.~Petrosino.
\newblock Scene background initialization: A taxonomy.
\newblock \emph{Pattern Recognition Letters}, 96:\penalty0 3--11, 2017.

\bibitem[Brutzer et~al.(2011)Brutzer, Hoferlin, and Heidemann]{cvpr11brutzer}
S.~Brutzer, B.~Hoferlin, and G.~Heidemann.
\newblock Evaluation of background subtraction techniques for video
  surveillance.
\newblock In \emph{IEEE Computer Vision and Pattern Recognition}, pages
  1937--1944, 2011.

\bibitem[Cai et~al.(2010)Cai, Cand\`{e}s, and Shen]{caicandesshen}
J.~F. Cai, E.~J. Cand\`{e}s, and Z.~Shen.
\newblock A singular value thresholding algorithm for matrix completion.
\newblock \emph{SIAM Journal on Optimization}, 20\penalty0 (4):\penalty0
  1956--1982, 2010.

\bibitem[Cand\`{e}s and Plan(2009)]{candesplan}
E.~J. Cand\`{e}s and Y.~Plan.
\newblock Matrix completion with noise.
\newblock \emph{Proceedings of the IEEE}, 98\penalty0 (6):\penalty0 925--936,
  2009.

\bibitem[Cand\`{e}s and Recht(2009)]{candes_MC}
E.~J. Cand\`{e}s and B.~Recht.
\newblock Exact matrix completion via convex optimization.
\newblock \emph{Foundations of Computational Mathematics}, 9\penalty0
  (6):\penalty0 717--772, 2009.

\bibitem[Cand\`{e}s and Tao(2010)]{candes_MC2}
E.~J. Cand\`{e}s and T.~Tao.
\newblock The power of convex relaxation: Near-optimal matrix completion.
\newblock \emph{IEEE Transactions on Information Theory}, 56\penalty0
  (5):\penalty0 2053--2080, 2010.

\bibitem[Cand\`{e}s et~al.(2011)Cand\`{e}s, Li, Ma, and
  Wright]{candeslimawright}
E.~J. Cand\`{e}s, X.~Li, Y.~Ma, and J.~Wright.
\newblock Robust principal component analysis?
\newblock \emph{Journal of the Association for Computing Machinery},
  58\penalty0 (3):\penalty0 11:1--11:37, 2011.

\bibitem[Chandrasekaran et~al.(2011)Chandrasekaran, Sanghavi, Parrilo, and
  Willsky]{rpca_1}
V.~Chandrasekaran, S.~Sanghavi, P.~A. Parrilo, and A.~S. Willsky.
\newblock Rank-sparsity incoherence for matrix decomposition.
\newblock \emph{SIAM Journal on Optimization}, 21\penalty0 (2):\penalty0
  572--596, 2011.

\bibitem[Chen et~al.(2011)Chen, Xu, Caramanis, and Sanghavi]{Chen_RMC}
Y.~Chen, H.~Xu, C.~Caramanis, and S.~Sanghavi.
\newblock Robust matrix completion and corrupted columns.
\newblock In \emph{Proceedings of the 28th International Conference on
  International Conference on Machine Learning}, pages 873--880, 2011.

\bibitem[Cherapanamjeri et~al.(2017{\natexlab{a}})Cherapanamjeri, Gupta, and
  Jain]{CherapanamjeriGJ17}
Y.~Cherapanamjeri, K.~Gupta, and P.~Jain.
\newblock Nearly optimal robust matrix completion.
\newblock In \emph{Proceedings of the 34th International Conference on Machine
  Learning (ICML)}, pages 797--805, 2017{\natexlab{a}}.

\bibitem[Cherapanamjeri et~al.(2017{\natexlab{b}})Cherapanamjeri, Jain, and
  Netrapalli]{CherapanamjeriJN17}
Y.~Cherapanamjeri, P.~Jain, and P.~Netrapalli.
\newblock Thresholding based outlier robust {P}{C}{A}.
\newblock In \emph{Proceedings of the 30th Conference on Learning Theory
  (COLT)}, pages 593--628, 2017{\natexlab{b}}.

\bibitem[Drusvyatskiy and Lewis(2013)]{drusvyatskiy2013optimality}
D.~Drusvyatskiy and A.~S. Lewis.
\newblock Optimality, identifiability, and sensitivity.
\newblock \emph{Mathematical Programming}, pages 1--32, 2013.

\bibitem[Dutta(2016)]{dutta_thesis}
A.~Dutta.
\newblock \emph{Weighted Low-Rank Approximation of Matrices:Some Analytical and
  Numerical Aspects}.
\newblock PhD thesis, University of Central Florida, 2016.

\bibitem[Dutta and Li(2017)]{duttali_bg}
A.~Dutta and X.~Li.
\newblock Weighted low rank approximation for background estimation problems.
\newblock In \emph{The IEEE International Conference on Computer Vision
  Workshops (ICCVW)}, pages 1853--1861, 2017.

\bibitem[{Dutta} and {Richt\'{a}rik}(2019)]{dutta_online}
A.~{Dutta} and P.~{Richt\'{a}rik}.
\newblock Online and batch supervised background estimation via l1 regression.
\newblock In \emph{2019 IEEE Winter Conference on Applications of Computer
  Vision (WACV)}, pages 541--550, 2019.

\bibitem[Dutta et~al.(2017{\natexlab{a}})Dutta, Gong, Li, and
  Shah]{dutta2017weighted}
A.~Dutta, B.~Gong, X.~Li, and M.~Shah.
\newblock Weighted singular value thresholding and its application to
  background estimation.
\newblock \emph{arXiv:1707.00133}, 2017{\natexlab{a}}.

\bibitem[Dutta et~al.(2017{\natexlab{b}})Dutta, Li, and Richt\'{a}rik]{inWLR}
A.~Dutta, X.~Li, and P.~Richt\'{a}rik.
\newblock A batch-incremental video background estimation model using weighted
  low-rank approximation of matrices.
\newblock In \emph{The IEEE International Conference on Computer Vision
  Workshops (ICCVW)}, pages 1835--1843, 2017{\natexlab{b}}.

\bibitem[Dutta et~al.(2018{\natexlab{a}})Dutta, Hanzely, and
  Richt\'{a}rik]{duttahanzely}
A.~Dutta, F.~Hanzely, and P.~Richt\'{a}rik.
\newblock A nonconvex projection method for robust \uppercase{PCA},
  2018{\natexlab{a}}.
\newblock To appear in Thirty-Third AAAI Conference on Artificial Intelligence
  (AAAI-19), arXiv:1805.07962.

\bibitem[Dutta et~al.(2018{\natexlab{b}})Dutta, Li, and
  Richt\'{a}rik]{duttalirichtarik_modeling}
A.~Dutta, X.~Li, and P.~Richt\'{a}rik.
\newblock Weighted low-rank approximation of matrices and background modeling,
  2018{\natexlab{b}}.
\newblock arXiv:1804.06252.

\bibitem[Fei-Fei et~al.(2007)Fei-Fei, Fergus, and Perona]{caltech101}
L.~Fei-Fei, R.~Fergus, and P.~Perona.
\newblock Learning generative visual models from few training examples:
  \uppercase{A}n incremental \uppercase{B}ayesian approach tested on 101 object
  categories.
\newblock \emph{Computer Vision and Image Understanding}, 106\penalty0
  (1):\penalty0 59--70, 2007.

\bibitem[Georghiades et~al.(2001)Georghiades, Belhumeur, and
  Kriegman]{yale_face}
A.~Georghiades, P.~Belhumeur, and D.~Kriegman.
\newblock From few to many: Illumination cone models for face recognition under
  variable lighting and pose.
\newblock \emph{IEEE Transactions on PAMI}, 23\penalty0 (6):\penalty0 643--660,
  2001.

\bibitem[Goes et~al.(2014)Goes, Zhang, Arora, and Lerman]{rspca}
J.~Goes, T.~Zhang, R.~Arora, and G.~Lerman.
\newblock Robust stochastic principal component analysis.
\newblock In \emph{Proceedings of the 17th International Conference on Articial
  Intelligence and Statistics}, pages 266--274, 2014.

\bibitem[He et~al.(2012)He, Balzano, and Szlam]{grasta}
J.~He, L.~Balzano, and A.~Szlam.
\newblock Incremental gradient on the {G}rassmannian for online foreground and
  background separation in subsampled video.
\newblock \emph{IEEE Computer Vision and Pattern Recognition}, pages
  1937--1944, 2012.

\bibitem[Jain and Netrapalli(2015)]{JN15}
P.~Jain and P.~Netrapalli.
\newblock Fast exact matrix completion with finite samples.
\newblock In \emph{Proceedings of The 28th Conference on Learning Theory
  (COLT)}, pages 1007--1034, 2015.

\bibitem[Jain et~al.(2013)Jain, Netrapalli, and Sanghavi]{Jain}
P.~Jain, P.~Netrapalli, and S.~Sanghavi.
\newblock Low-rank matrix completion using alternating minimization.
\newblock In \emph{Proceedings of the Forty-fifth Annual ACM Symposium on
  Theory of Computing}, pages 665--674, 2013.

\bibitem[Keshavan et~al.(2010)Keshavan, Montanari, and Oh]{KeshavanMC}
R.~Keshavan, A.~Montanari, and S.~Oh.
\newblock Matrix completion from a few entries.
\newblock \emph{IEEE Transactions on Information Theory}, 56\penalty0
  (6):\penalty0 2980--2998, 2010.

\bibitem[Lee(2003)]{lee2003smooth}
J.~M. Lee.
\newblock \emph{Smooth manifolds}.
\newblock Springer, 2003.

\bibitem[Lewis(2003)]{LewisPartlySmooth}
A.~S. Lewis.
\newblock Active sets, non-smoothness, and sensitivity.
\newblock \emph{SIAM Journal on Optimization}, 13\penalty0 (3):\penalty0
  702--725, 2003.

\bibitem[Lewis and Zhang(2013)]{LewisPartlyTiltHessian}
A.~S. Lewis and S.~Zhang.
\newblock Partial smoothness, tilt stability, and generalized {H}essians.
\newblock \emph{SIAM Journal on Optimization}, 23\penalty0 (1):\penalty0
  74--94, 2013.

\bibitem[Liang(2016)]{liang2016convergence}
J.~Liang.
\newblock \emph{Convergence Rates of First-Order Operator Splitting Methods}.
\newblock PhD thesis, Normandie Universit{\'e}; GREYC CNRS UMR 6072, 2016.

\bibitem[Liang et~al.(2014)Liang, Fadili, and Peyr{\'e}]{liang2014local}
J.~Liang, J.~Fadili, and G.~Peyr{\'e}.
\newblock Local linear convergence of {F}orward--{B}ackward under partial
  smoothness.
\newblock In \emph{Advances in Neural Information Processing Systems}, pages
  1970--1978, 2014.

\bibitem[Liang et~al.(2016)Liang, Fadili, and Peyr{\'e}]{liang2016multi}
J.~Liang, J.~Fadili, and G.~Peyr{\'e}.
\newblock A multi-step inertial {F}orward--{B}ackward splitting method for
  non-convex optimization.
\newblock In \emph{Advances in Neural Information Processing Systems}, 2016.

\bibitem[Lin et~al.(2009)Lin, Ganesh, Wright, Wu, Chen, and Ma]{dual_rpca}
Z.~Lin, A.~Ganesh, J.~Wright, L.~Wu, M.~Chen, and Yi~Ma.
\newblock Fast convex optimization algorithms for exact recovery of a corrupted
  low-rank matrix.
\newblock \emph{UIUC Technical Report UILU-ENG-09-2214}, 2009.

\bibitem[Lin et~al.(2010)Lin, Chen, and Ma]{LinChenMa}
Z.~Lin, M.~Chen, and Y.~Ma.
\newblock The augmented {L}agrange multiplier method for exact recovery of
  corrupted low-rank matrices, 2010.
\newblock arXiv1009.5055.

\bibitem[Lions and Mercier(1979)]{lions1979splitting}
P.~L. Lions and B.~Mercier.
\newblock Splitting algorithms for the sum of two nonlinear operators.
\newblock \emph{SIAM Journal on Numerical Analysis}, 16\penalty0 (6):\penalty0
  964--979, 1979.

\bibitem[Mare\v{c}ek et~al.(2017)Mare\v{c}ek, Richt\'{a}rik, and
  Tak\'{a}\v{c}]{Marecek_MC}
J.~Mare\v{c}ek, P.~Richt\'{a}rik, and M.~Tak\'{a}\v{c}.
\newblock Matrix completion under interval uncertainty.
\newblock \emph{European Journal of Operational Research}, 256\penalty0
  (1):\penalty0 35 -- 43, 2017.

\bibitem[Netrapalli et~al.(2014)Netrapalli, Niranjan, Sanghavi, Anandkumar, and
  Jain]{NIPS2014_5430}
P.~Netrapalli, U.~N. Niranjan, S.~Sanghavi, A.~Anandkumar, and P.~Jain.
\newblock Non-convex robust {P}{C}{A}.
\newblock In \emph{Advances in Neural Information Processing Systems 27}, pages
  1107--1115. 2014.

\bibitem[Recht et~al.(2010)Recht, Fazel, and Parrilo]{RechtFazelParrilo2007}
B.~Recht, M.~Fazel, and P.~A. Parrilo.
\newblock Guaranteed minimum-rank solutions of linear matrix equations via
  nuclear norm minimization.
\newblock \emph{SIAM review}, 52\penalty0 (3):\penalty0 471--501, 2010.

\bibitem[Rockafellar and Wets(1998)]{rockafellar1998variational}
R.~T. Rockafellar and R.~Wets.
\newblock \emph{Variational analysis}, volume 317.
\newblock Springer Verlag, 1998.

\bibitem[Tao and Yang(2011)]{rmc_taoyuan}
M.~Tao and J.~Yang.
\newblock Recovering low-rank and sparse components of matrices from incomplete
  and noisy observations.
\newblock \emph{SIAM Journal on Optimization}, 21\penalty0 (1):\penalty0
  57--81, 2011.

\bibitem[Toyama et~al.(1999)Toyama, Krumm, Brumitt, and Meyers]{wallflower}
K~Toyama, J.~Krumm, B.~Brumitt, and B.~Meyers.
\newblock Wallflower: Principles and practice of background maintainance.
\newblock \emph{Seventh International Conference on Computer Vision}, pages
  255--261, 1999.

\bibitem[Wang et~al.(2004)Wang, Bovik, Sheikh, and Simoncelli]{mssim}
Z.~Wang, A.~C. Bovik, H.~R. Sheikh, and E.~P. Simoncelli.
\newblock Image quality assessment: from error visibility to structural
  similarity.
\newblock \emph{IEEE Transaction on Image Processing}, 13\penalty0
  (4):\penalty0 600--612, 2004.

\bibitem[Waters et~al.(2011)Waters, Sankaranarayanan, and Baraniuk]{SpaRCS}
A.~E. Waters, A.~C. Sankaranarayanan, and R.~Baraniuk.
\newblock Spa{R}{C}{S}: Recovering low-rank and sparse matrices from
  compressive measurements.
\newblock \emph{Proceedings of 24nd Advances in Neural Information Processing
  systems}, pages 1089--1097, 2011.

\bibitem[Wen et~al.(2012)Wen, Yin, and Zhang]{LMAFIT}
Z.~Wen, W.~Yin, and Y.~Zhang.
\newblock Solving a low-rank factorization model for matrix completion by a
  nonlinear successive over-relaxation algorithm.
\newblock \emph{Mathematical Programming Computation}, 4\penalty0 (4):\penalty0
  333--361, 2012.

\bibitem[Wright et~al.(2009)Wright, Peng, Ma, Ganseh, and Rao]{APG}
J.~Wright, Y.~Peng, Y.~Ma, A.~Ganseh, and S.~Rao.
\newblock Robust principal component analysis: exact recovery of corrputed
  low-rank matrices by convex optimization.
\newblock \emph{Proceedings of 22nd Advances in Neural Information Processing
  systems}, pages 2080--2088, 2009.

\bibitem[Yi et~al.(2016)Yi, Park, Chen, and Caramanis]{RPCAgd}
X.~Yi, D.~Park, Y.~Chen, and C.~Caramanis.
\newblock Fast algorithms for robust \uppercase{PCA} via gradient descent.
\newblock \emph{Advances in Neural Information Processing systems}, pages
  361--369, 2016.

\bibitem[Yuan and Yang(2013)]{adm_rpca}
X.~Yuan and J.~Yang.
\newblock Sparse and low-rank matrix decomposition via alternating direction
  methods.
\newblock \emph{Pacific Journal of Optimization}, 9\penalty0 (1):\penalty0
  167--180, 2013.

\bibitem[Zhang and Yang(2018)]{zhangpca}
T.~Zhang and Y.~Yang.
\newblock Robust \uppercase{PCA} by manifold optimization.
\newblock \emph{Journal of Machine Learning Research}, 19:\penalty0 1--39,
  2018.

\bibitem[Zhou and Tao(2011)]{godec}
T.~Zhou and D.~Tao.
\newblock Godec: Randomized low-rank and sparse matrix decomposition in noisy
  case.
\newblock In \emph{Proceedings of the 28th International Conference on Machine
  Learning (ICML)}, pages 33--40, 2011.

\end{thebibliography}

\clearpage

\appendix

\part*{Supplementary Material}

The organization of this supplementary material is: extra supporting numerical experiments are reported in Section \ref{sec:extra-exp}; Proofs for the global convergence result of Algorithm \ref{alg:apg} is provided in Section \ref{sec:proof-global}; The proof of local linear convergence and a numerical example are provided in Section \ref{sec:proof-local}. Lastly, we provide a comprehensive table to list all baselines we compare to in Section~\ref{sec:table}.

%%%%%%%%%%%%%%%%%%%%%%%%%%%%%%%%%%%%

\section{Extra Experiments}\label{sec:extra-exp}

In this section, we empirically study convergence properties of Algorithm~\ref{alg:apg} on synthetic, well-understood data. In particular, we examine its sensitivity to user-specified parameters $\gamma, a_k, b_k$, target sparsity level $\alpha$, target rank $r$ and lastly the sensitivity to initialization. Moreover, we provide extra phase transition diagrams and both quantitative and qualitative results on the inlier detection problem.

\subsection{Sensitivity to the choice of $\gamma, a_k, b_k$}

In this experiment, we compare different choices of algorithm parameters $\gamma, a_k, b_k$ on instances of~\eqref{eq:X-Y} with various target sparsity level $\alpha$ and target rank $r$. In each experiment, we make sure that the solution exists; we generate random matrices $\tilde{L},\tilde{S}$ (with independent entries ${\cal N}(0,1)$), project them onto low rank and sparse constraint set respectively to obtain $\hat{L}, \hat{S}$ and set $A = \hat{L}+ \hat{S}$. For simplicity we consider only $a_k = b_k=a$ and $m=n=100$. Figure~\ref{fig:sens} shows the result. We see that parameter choice $\gamma = 1.1, a_k = b_k = \frac12$ is the most reliable.

\begin{figure}[h]
\centering
\begin{minipage}{0.25\textwidth}
  \centering
\includegraphics[width =  \textwidth ]{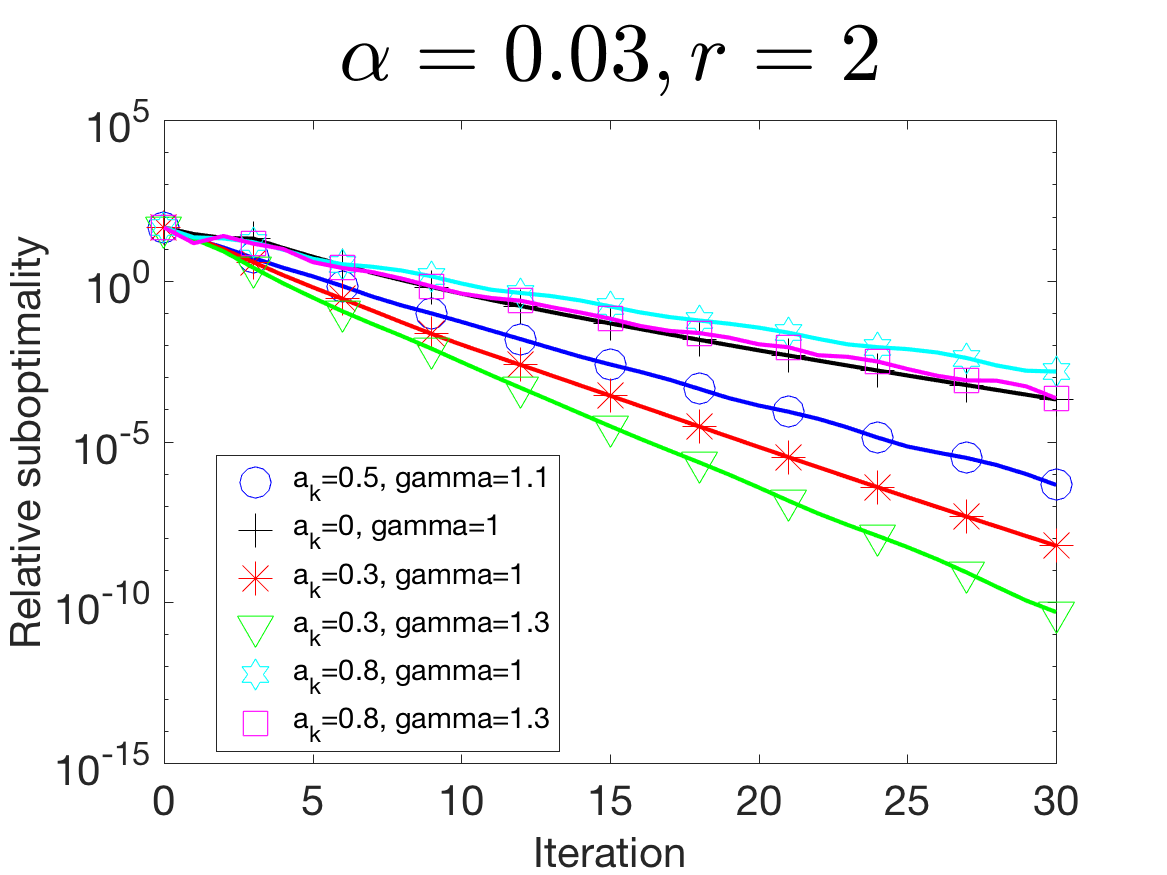}
        %\caption{ Residual vs. iteration  }\label{fig:bl_ex_flops}
\end{minipage}%
\begin{minipage}{0.25\textwidth}
  \centering
\includegraphics[width =  \textwidth ]{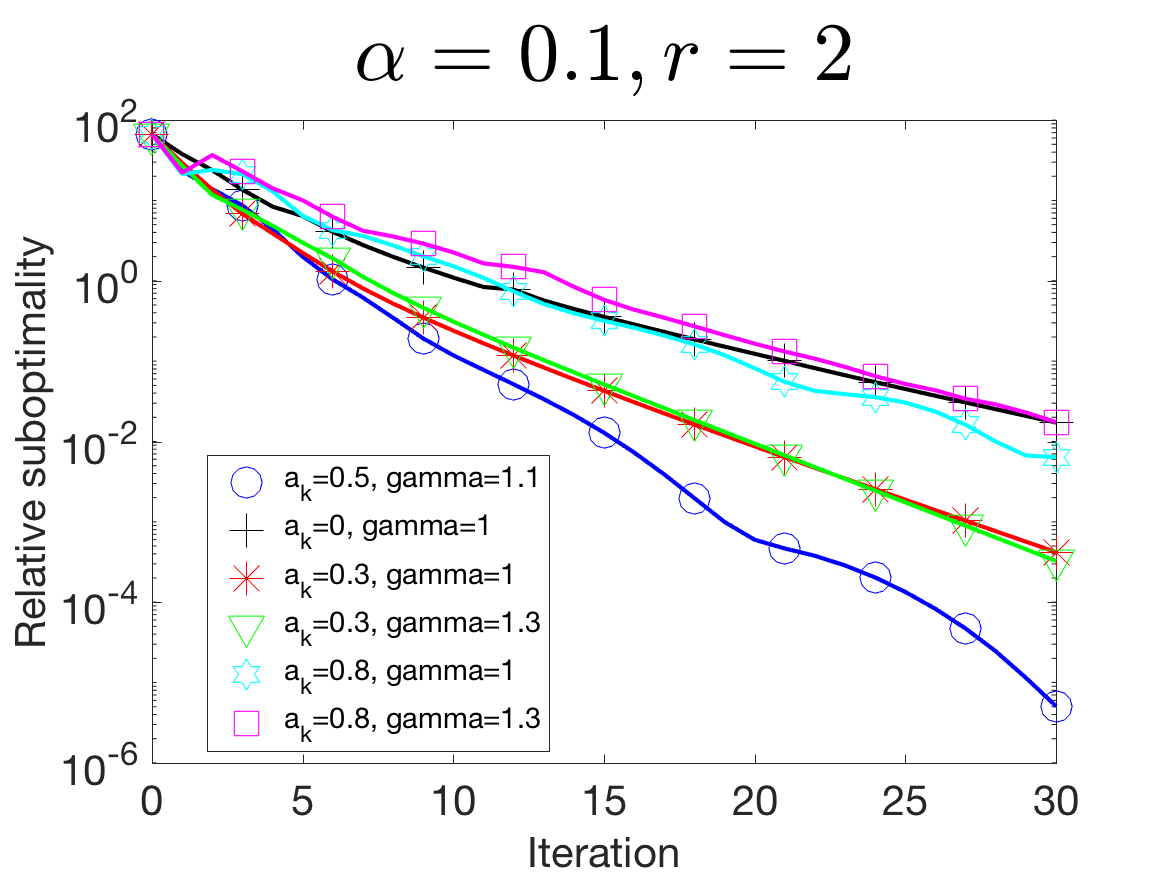}
        %\caption{ Residual vs. iteration  }\label{fig:bl_ex_flops}
\end{minipage}%
\begin{minipage}{0.25\textwidth}
  \centering
\includegraphics[width =  \textwidth ]{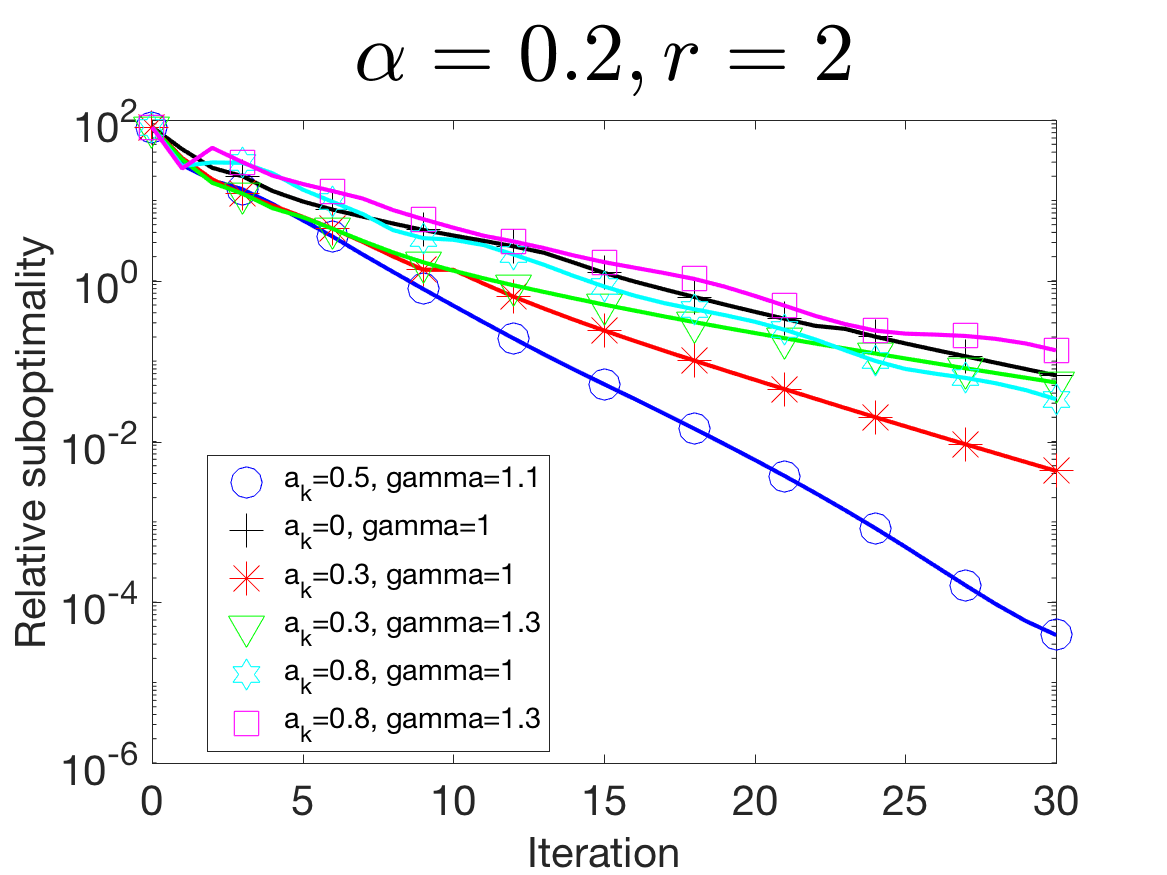}
        %\caption{ Residual vs. iteration  }\label{fig:bl_ex_flops}
\end{minipage}%
\begin{minipage}{0.25\textwidth}
  \centering
\includegraphics[width =  \textwidth ]{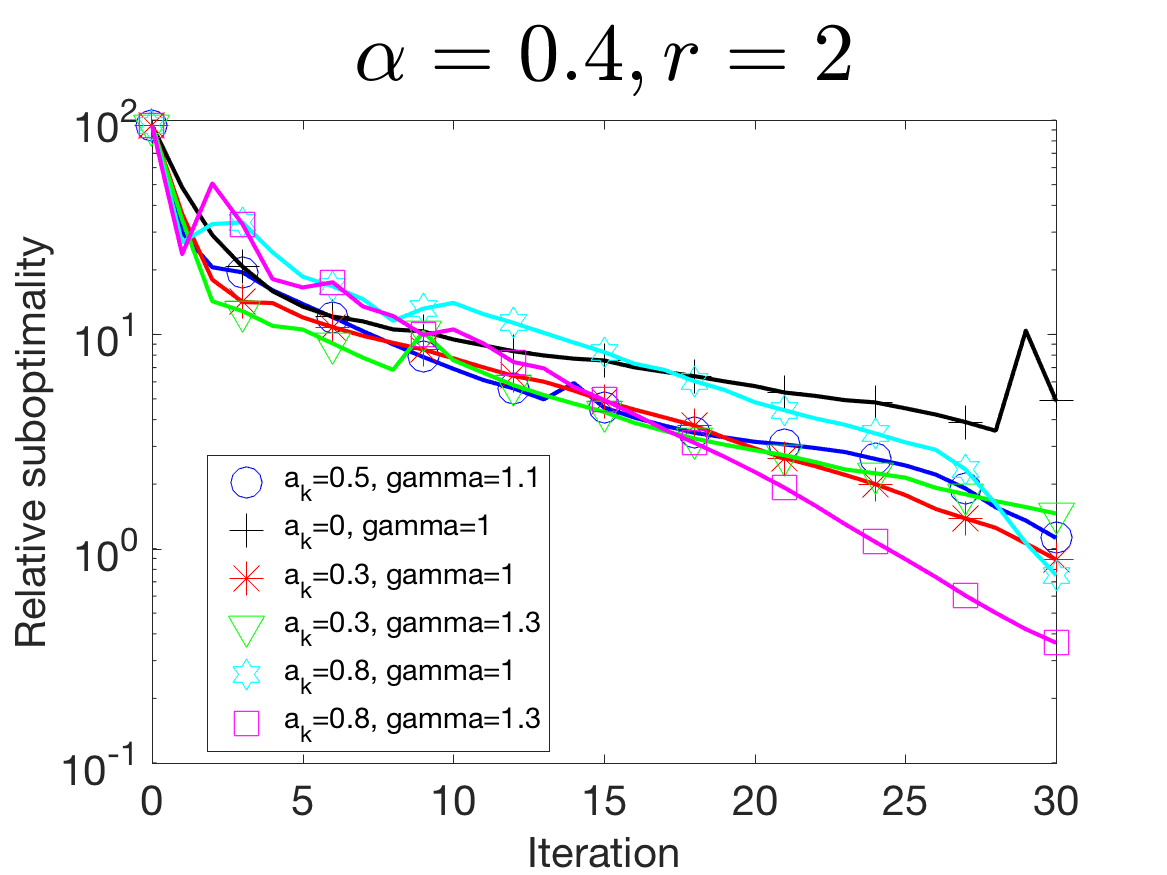}
        %\caption{ Residual vs. iteration  }\label{fig:bl_ex_flops}
\end{minipage}%
\\
\begin{minipage}{0.25\textwidth}
  \centering
\includegraphics[width =  \textwidth ]{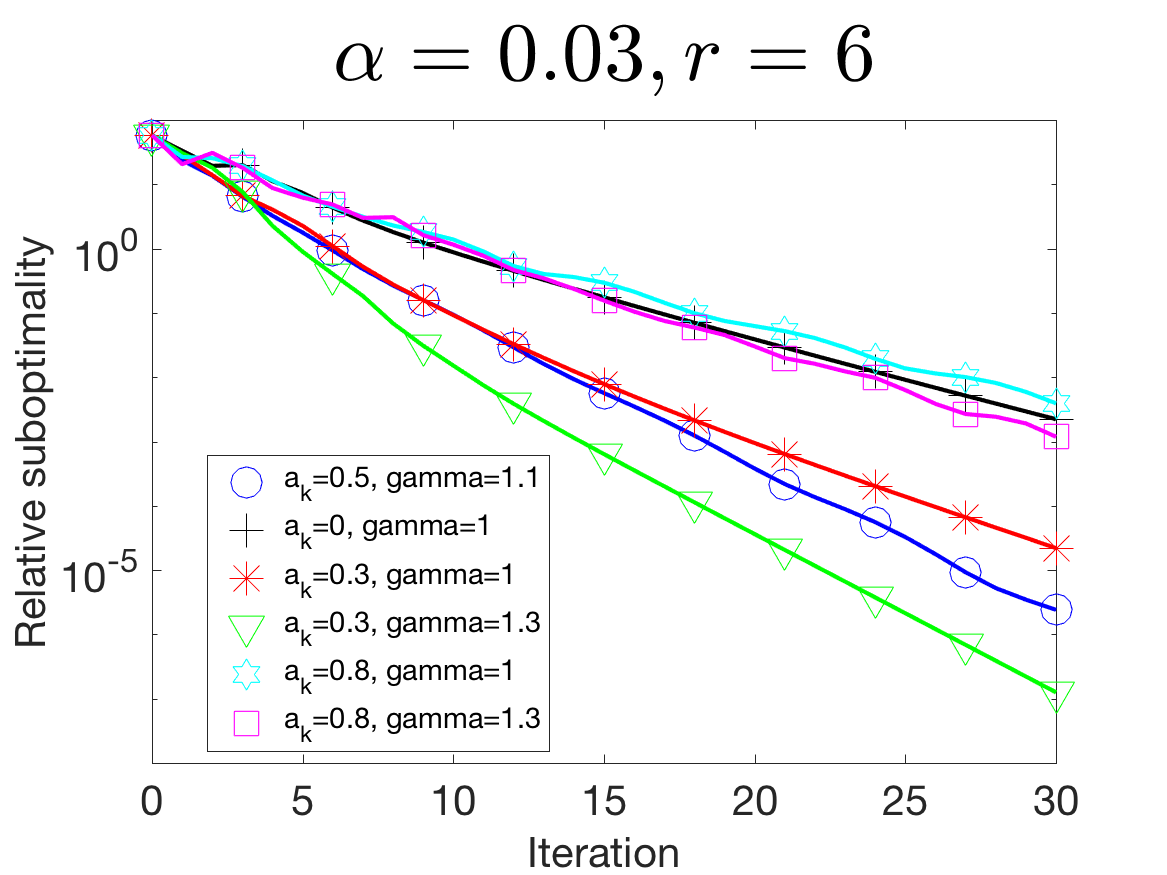}
        %\caption{ Residual vs. iteration  }\label{fig:bl_ex_flops}
\end{minipage}%
\begin{minipage}{0.25\textwidth}
  \centering
\includegraphics[width =  \textwidth]{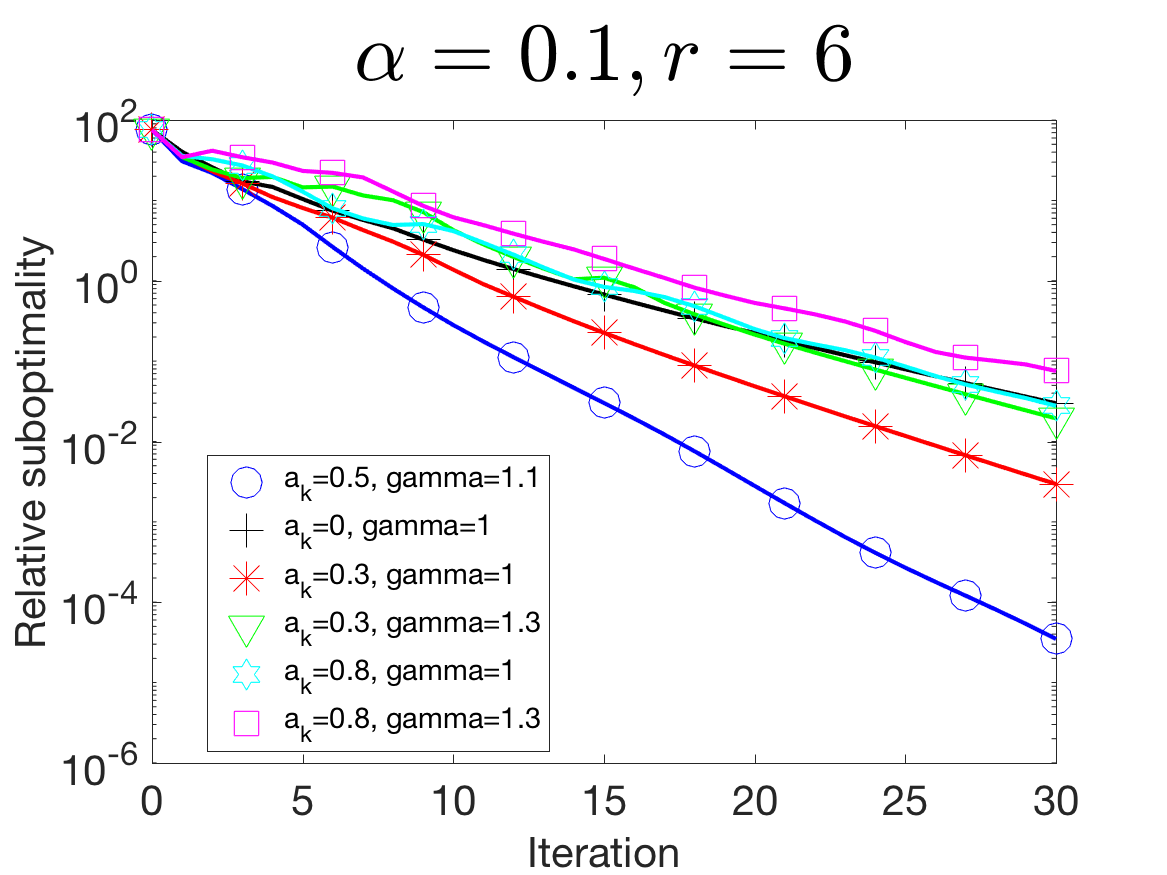}
        %\caption{ Residual vs. iteration  }\label{fig:bl_ex_flops}
\end{minipage}%
\begin{minipage}{0.25\textwidth}
  \centering
\includegraphics[width =  \textwidth ]{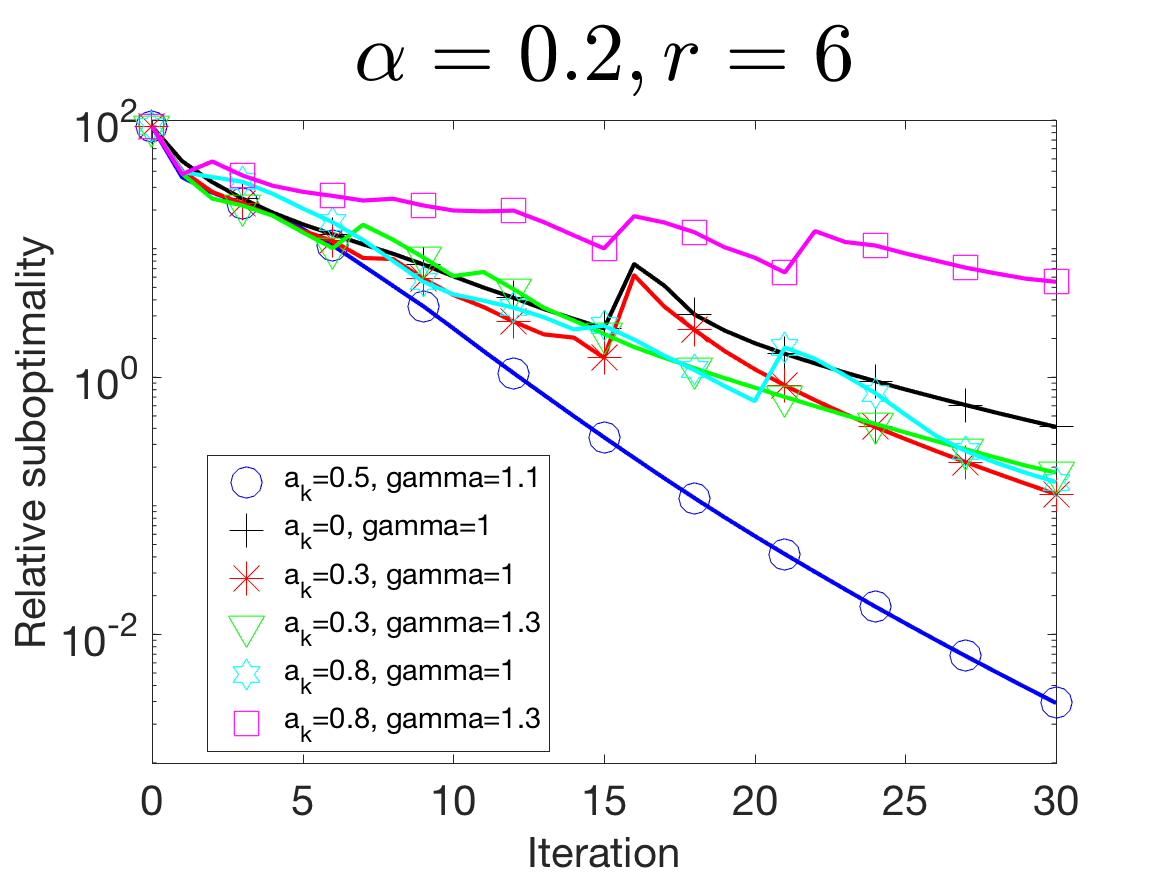}
        %\caption{ Residual vs. iteration  }\label{fig:bl_ex_flops}
\end{minipage}%
\begin{minipage}{0.25\textwidth}
  \centering
\includegraphics[width =  \textwidth ]{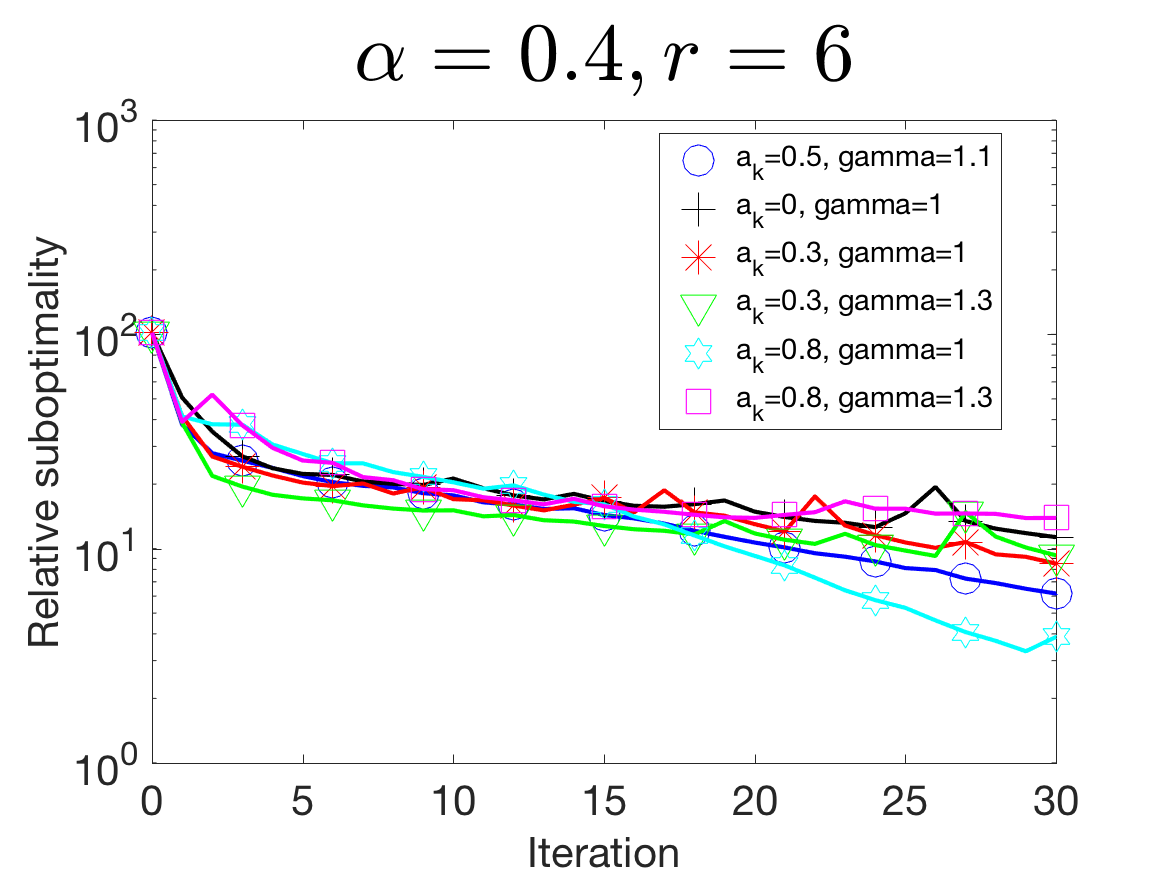}
        %\caption{ Residual vs. iteration  }\label{fig:bl_ex_flops}
\end{minipage}%
\\
\begin{minipage}{0.25\textwidth}
  \centering
\includegraphics[width =  \textwidth ]{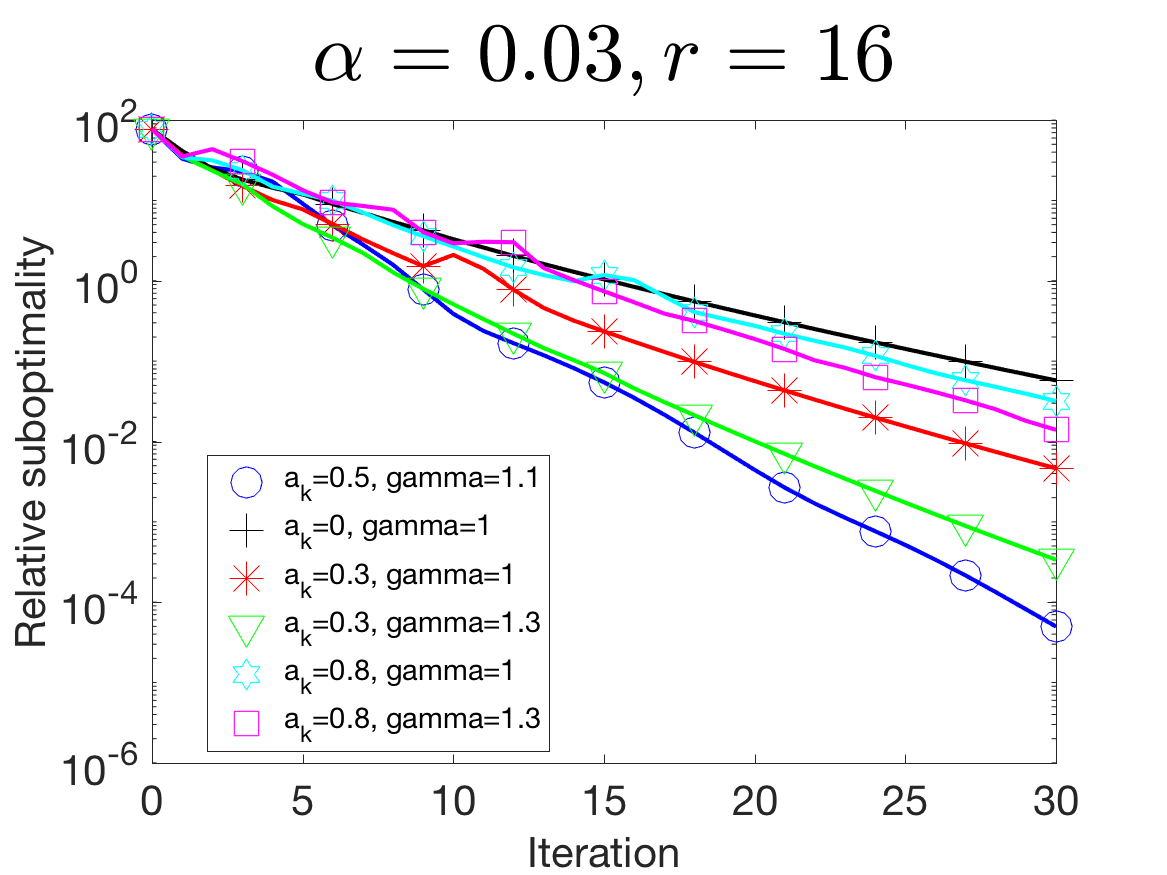}
        %\caption{ Residual vs. iteration  }\label{fig:bl_ex_flops}
\end{minipage}%
\begin{minipage}{0.25\textwidth}
  \centering
\includegraphics[width =  \textwidth ]{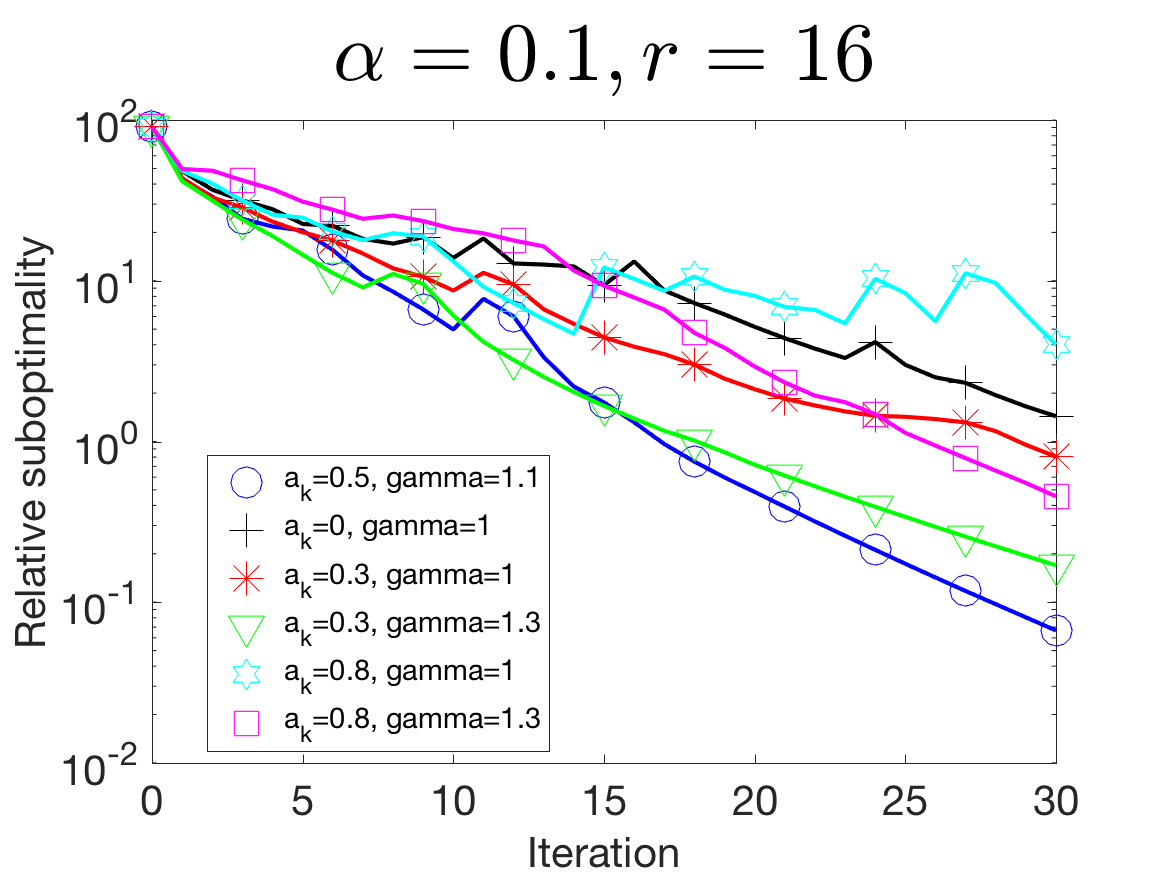}
        %\caption{ Residual vs. iteration  }\label{fig:bl_ex_flops}
\end{minipage}%
\begin{minipage}{0.25\textwidth}
  \centering
\includegraphics[width =  \textwidth ]{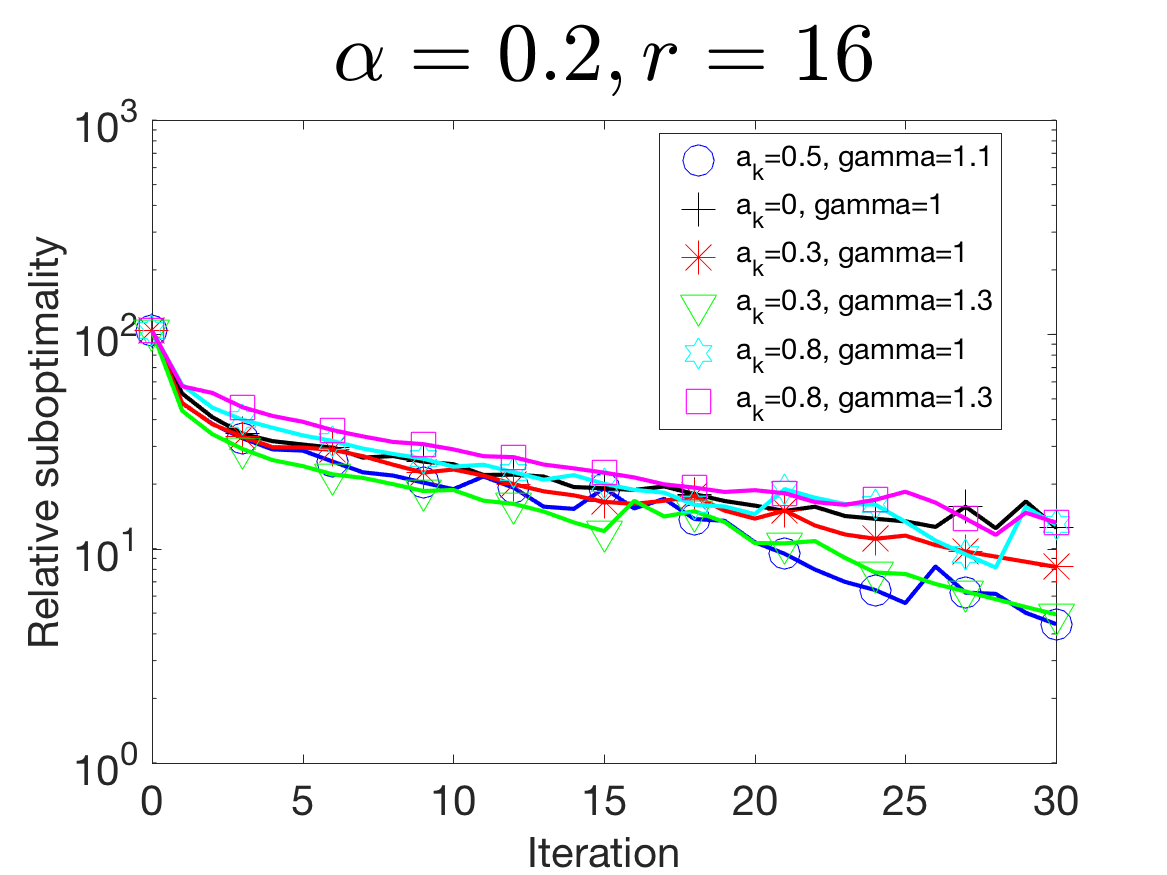}
        %\caption{ Residual vs. iteration  }\label{fig:bl_ex_flops}
\end{minipage}%
\begin{minipage}{0.25\textwidth}
  \centering
\includegraphics[width =  \textwidth ]{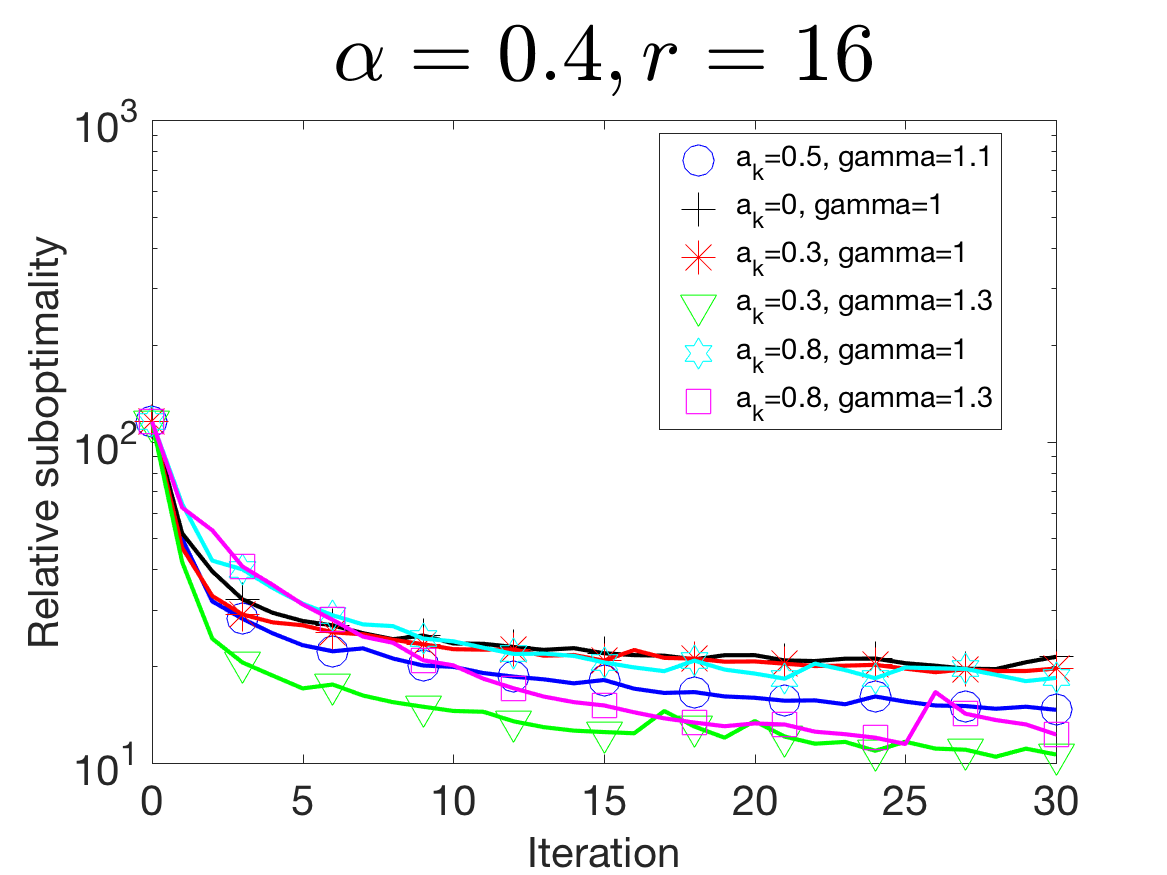}
        %\caption{ Residual vs. iteration  }\label{fig:bl_ex_flops}
\end{minipage}%
\\
\begin{minipage}{0.25\textwidth}
  \centering
\includegraphics[width =  \textwidth ]{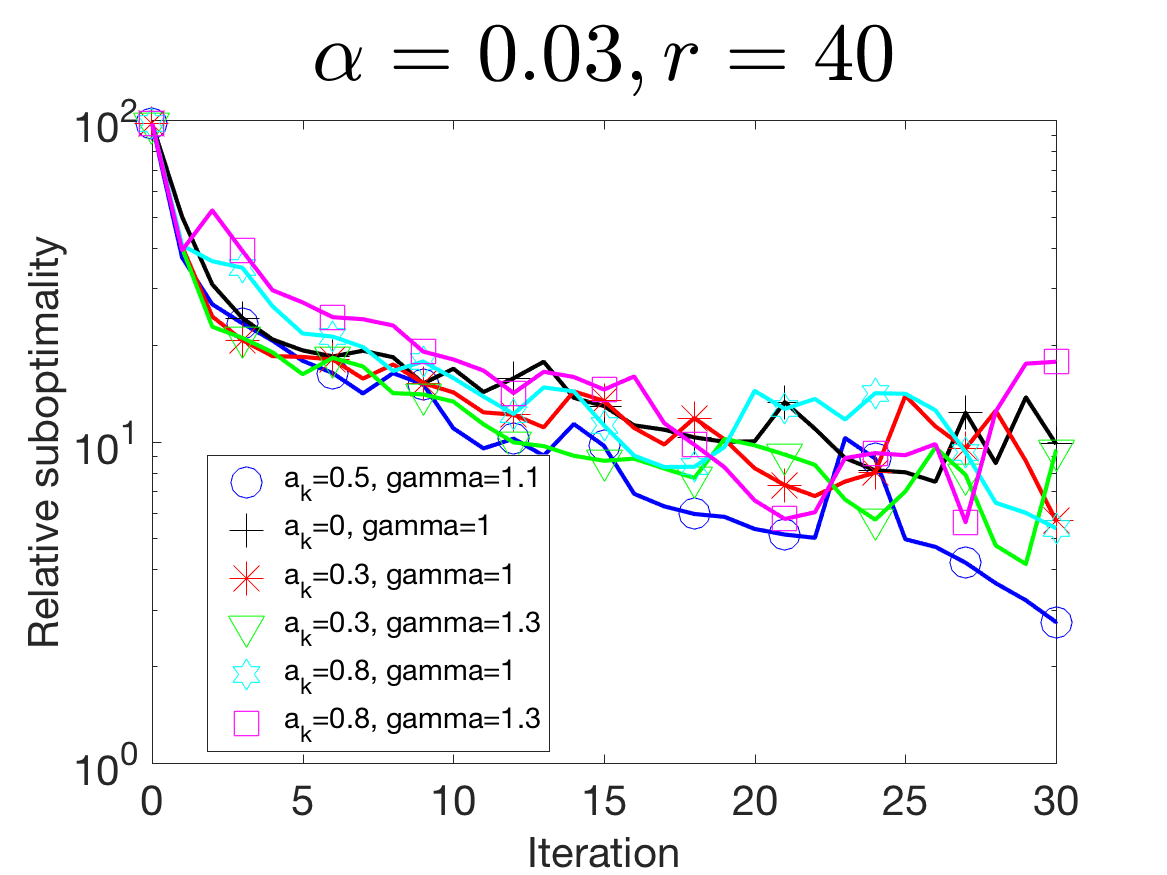}
        %\caption{ Residual vs. iteration  }\label{fig:bl_ex_flops}
\end{minipage}%
\begin{minipage}{0.25\textwidth}
  \centering
\includegraphics[width =  \textwidth ]{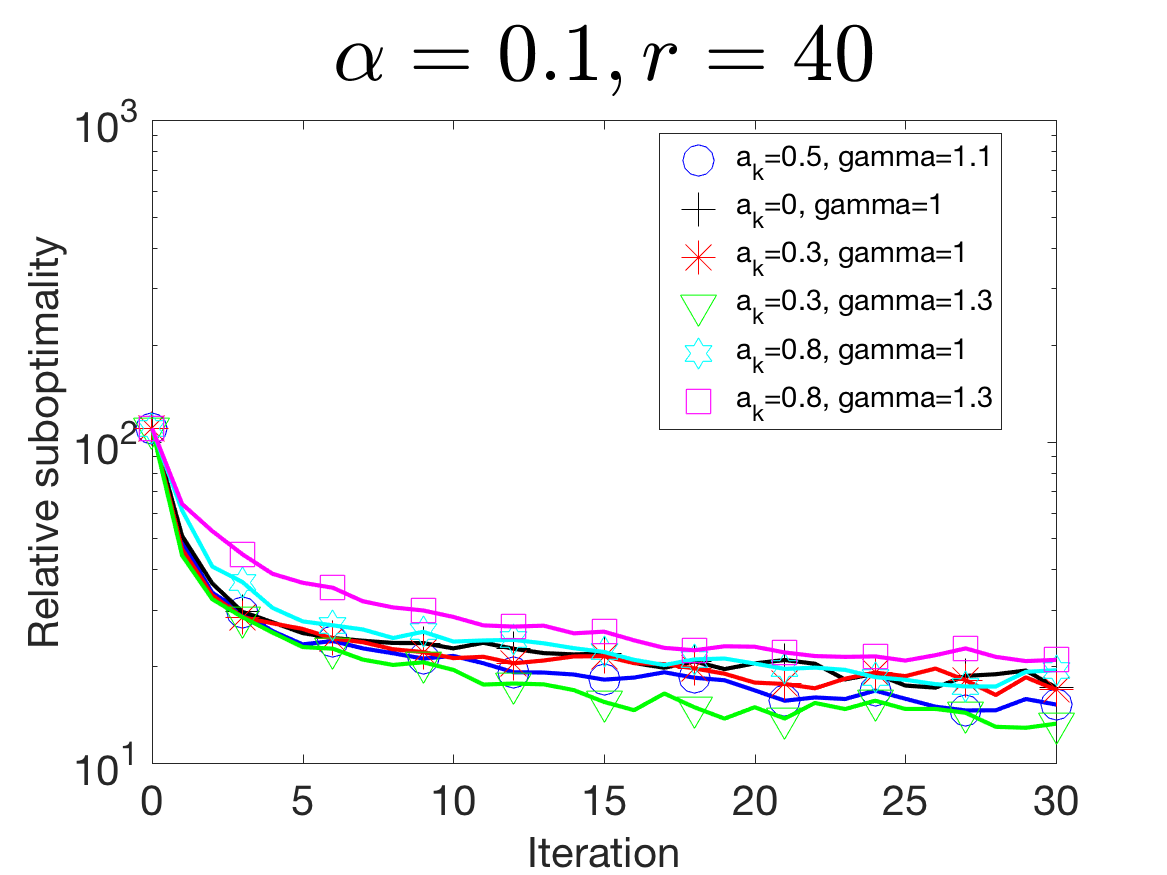}
        %\caption{ Residual vs. iteration  }\label{fig:bl_ex_flops}
\end{minipage}%
\begin{minipage}{0.25\textwidth}
  \centering
\includegraphics[width =  \textwidth ]{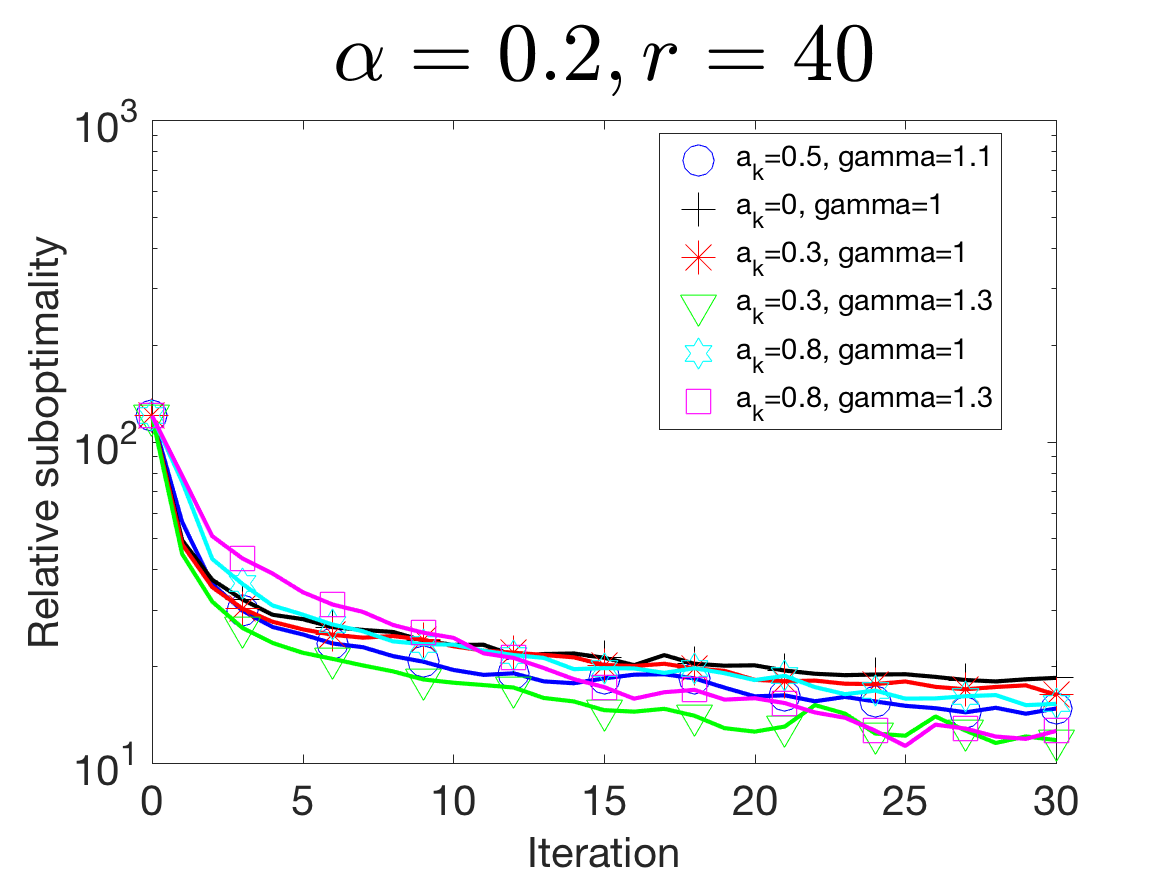}
        %\caption{ Residual vs. iteration  }\label{fig:bl_ex_flops}
\end{minipage}%
\begin{minipage}{0.25\textwidth}
  \centering
\includegraphics[width =  \textwidth ]{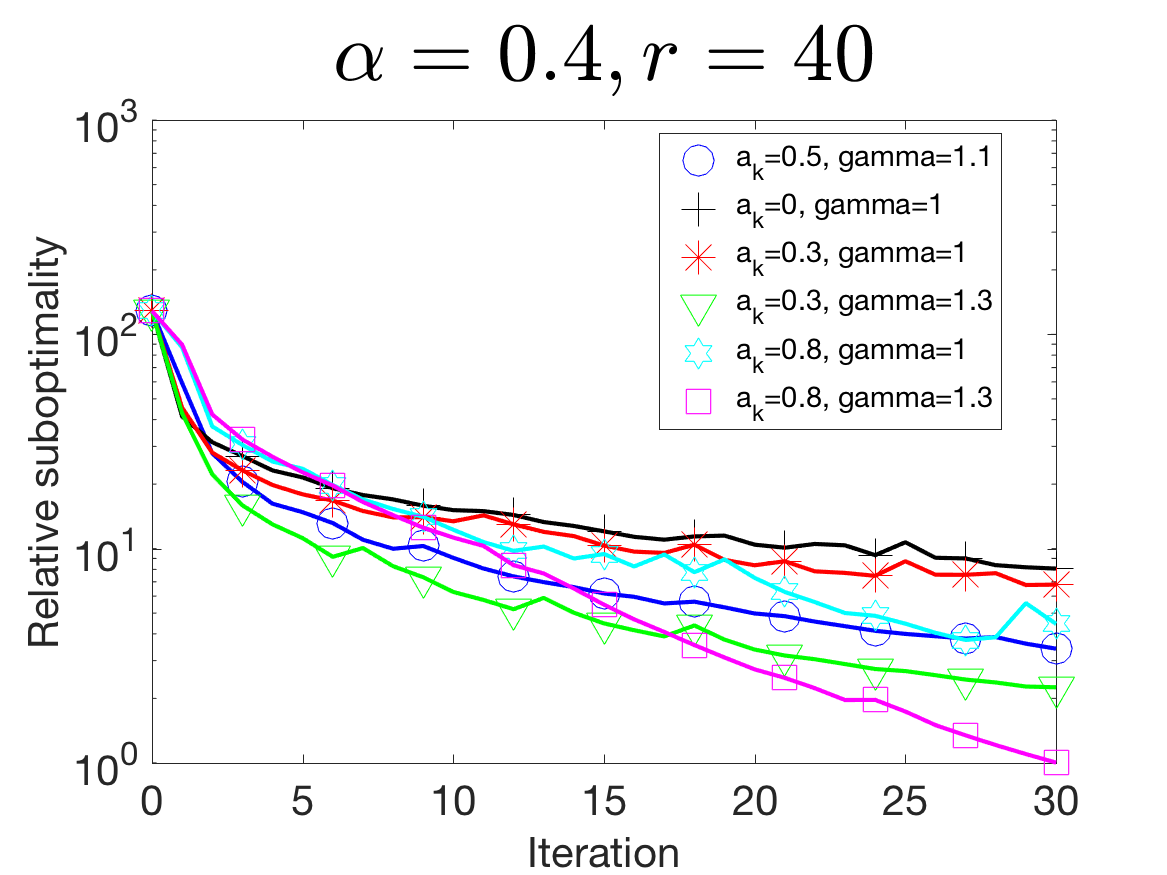}
        %\caption{ Residual vs. iteration  }\label{fig:bl_ex_flops}
\end{minipage}%
\caption{ Sensitivity of Algrithm~\ref{alg:apg} with respect to choice of $\gamma, a_k=b_k$.}\label{fig:sens}
\end{figure}

\subsection{Sensitivity to the choice of $r, \alpha$}
In this experiment, we examine how sensitive is Algorithm~\ref{alg:apg} on the correct choice of the target sparsity level $\alpha$ and the target rank $r$. 

In each experiment, we generate random matrices $\tilde{L},\tilde{S}$ (with independent entries ${\cal N}(0,1)$), project them onto $\hat{r}$-low rank and $\hat{\alpha}$-sparse constraint set respectively to obtain $\hat{L}, \hat{S}$ and set $A = \hat{L}+ \hat{S}$. Then, we run Algorithm~\ref{alg:apg} with various choices of $r,\alpha$ and report the results. For simplicity we consider only $\gamma = 1.1, a_k = b_k = \frac12$ (from the previous experiment) and $m=n=100$. Figure~\ref{fig:sens2} shows the result. We can see that if sparsity level is underestimated, the method converges very slowly. Moreover, the method is more sensitive to the correct choices of target sparsity than target rank. The last take-away from this experiment is that over-estimation of target parameters usually leads to slightly slower convergence.

\begin{figure}[h]
\centering
\begin{minipage}{0.25\textwidth}
  \centering
\includegraphics[width =  \textwidth ]{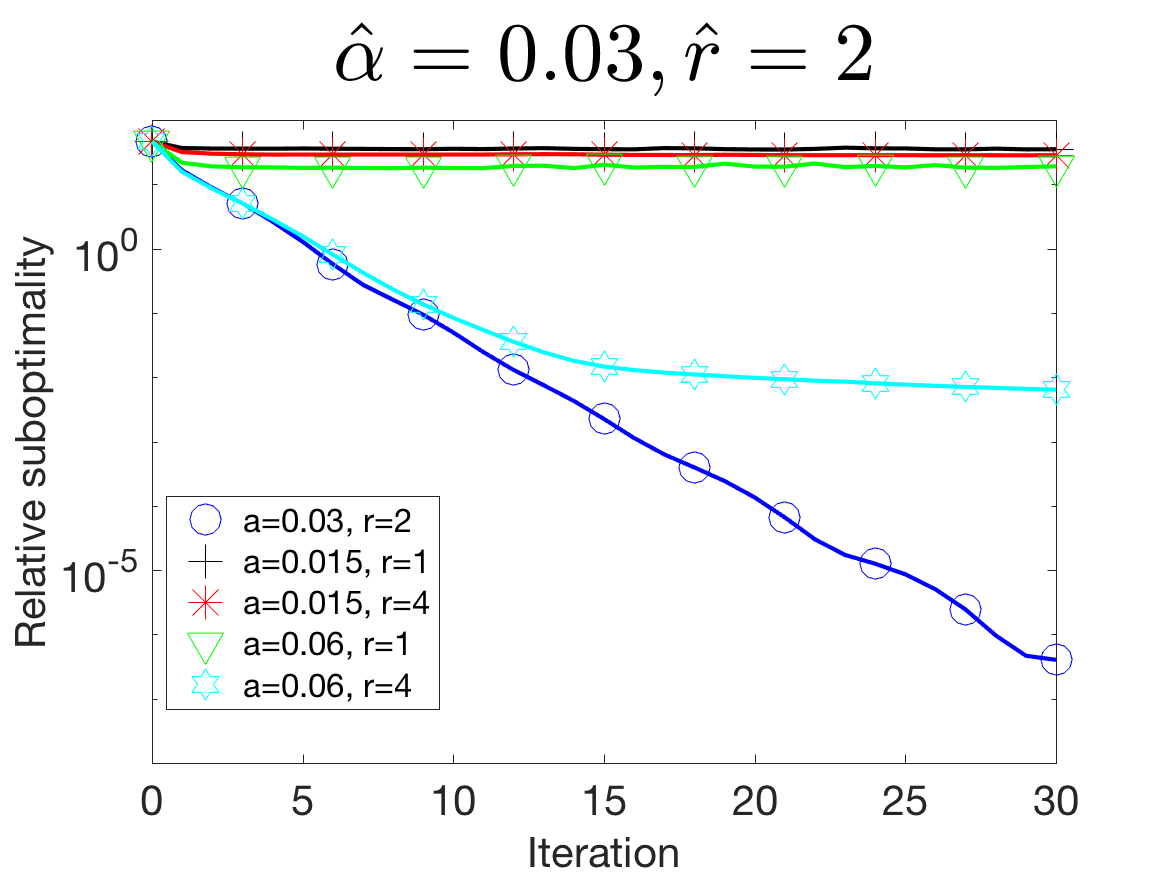}
        %\caption{ Residual vs. iteration  }\label{fig:bl_ex_flops}
\end{minipage}%
\begin{minipage}{0.25\textwidth}
  \centering
\includegraphics[width =  \textwidth ]{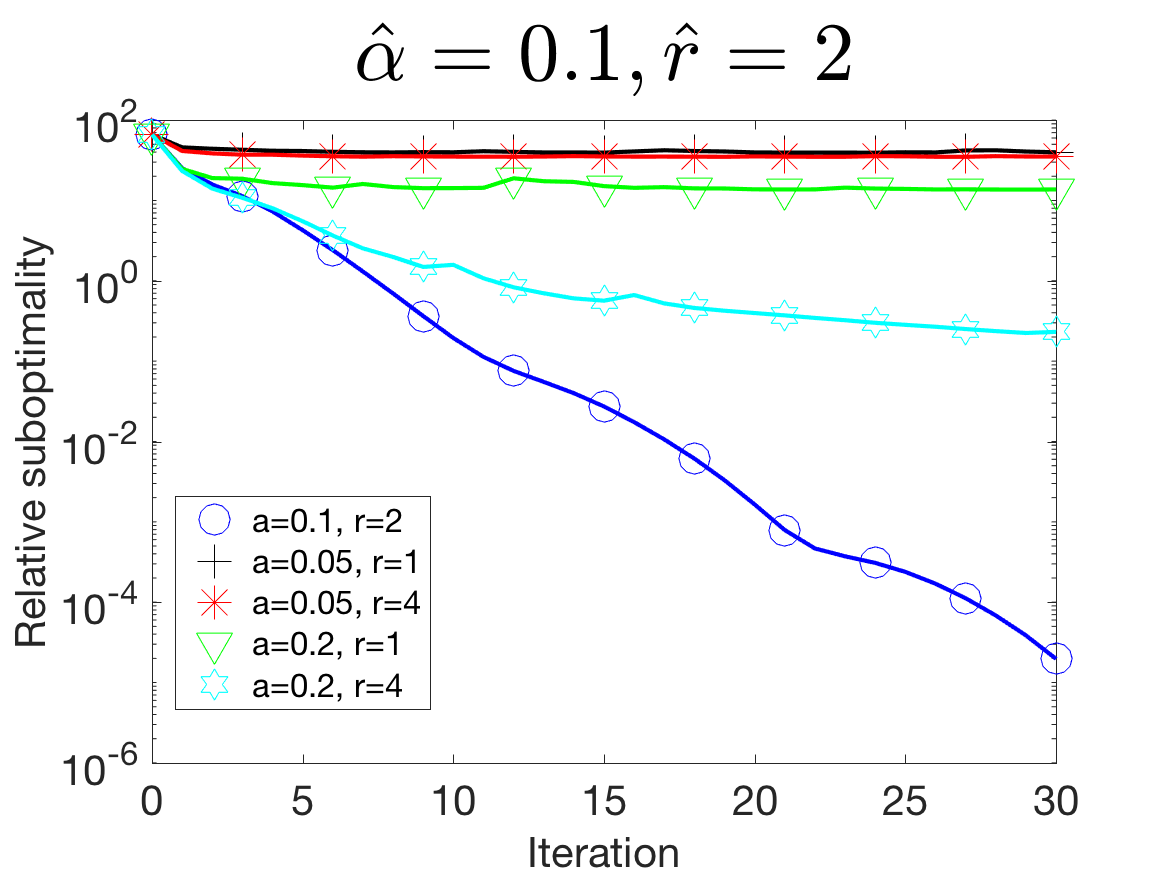}
        %\caption{ Residual vs. iteration  }\label{fig:bl_ex_flops}
\end{minipage}%
\begin{minipage}{0.25\textwidth}
  \centering
\includegraphics[width =  \textwidth ]{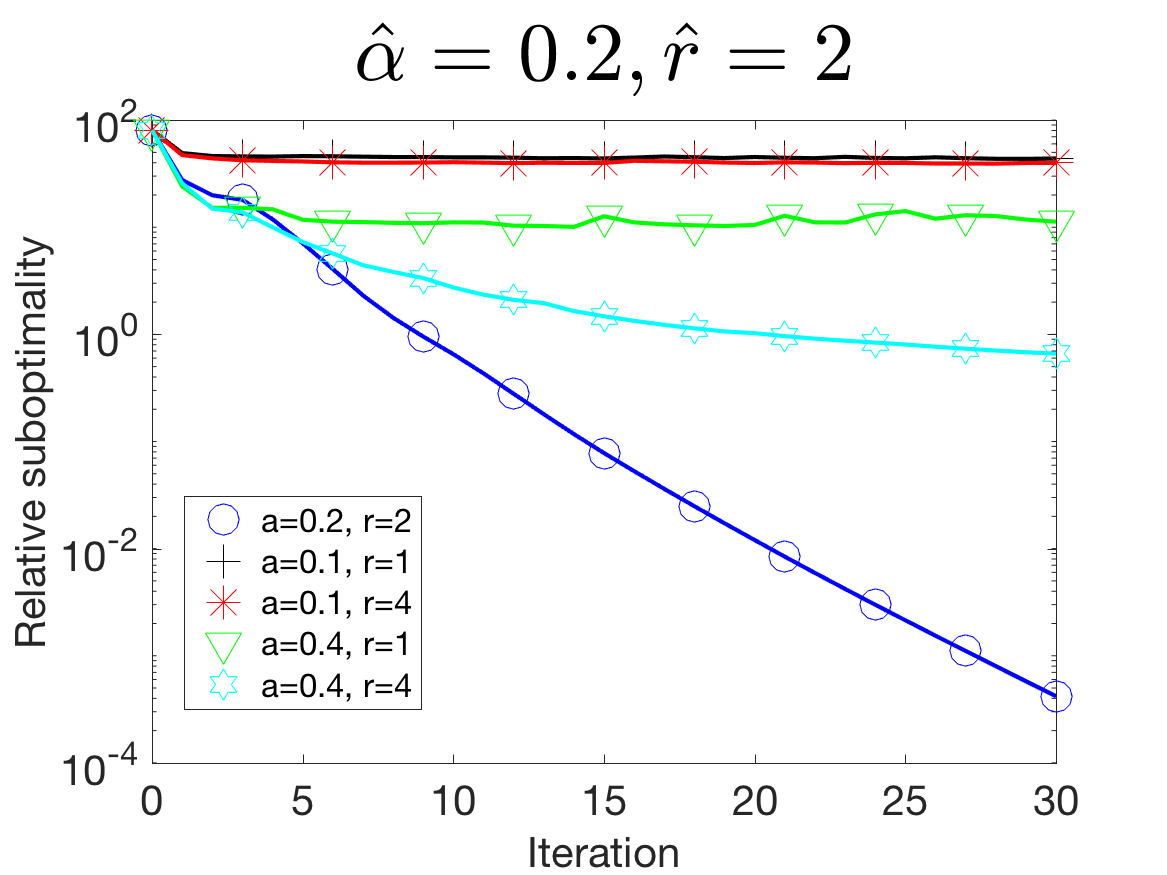}
        %\caption{ Residual vs. iteration  }\label{fig:bl_ex_flops}
\end{minipage}%
\begin{minipage}{0.25\textwidth}
  \centering
\includegraphics[width =  \textwidth ]{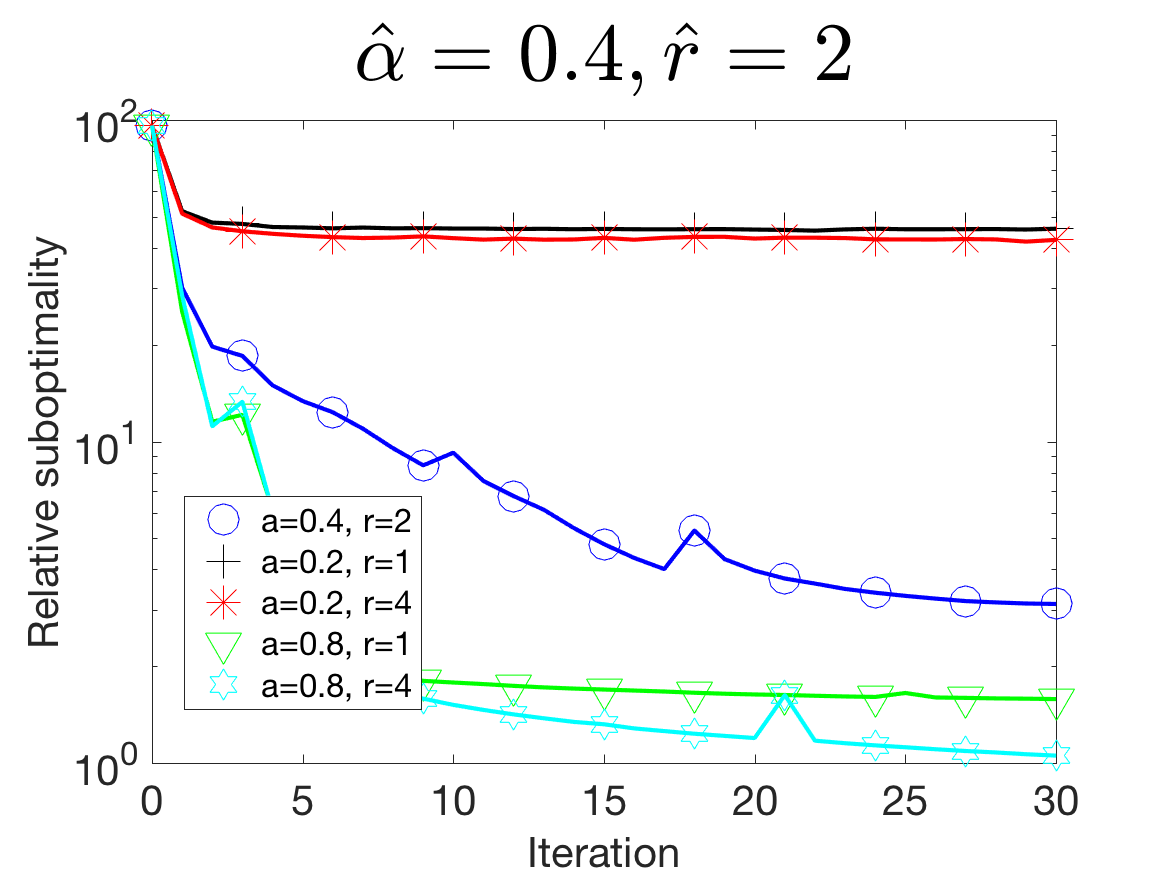}
        %\caption{ Residual vs. iteration  }\label{fig:bl_ex_flops}
\end{minipage}%
\\
\begin{minipage}{0.25\textwidth}
  \centering
\includegraphics[width =  \textwidth ]{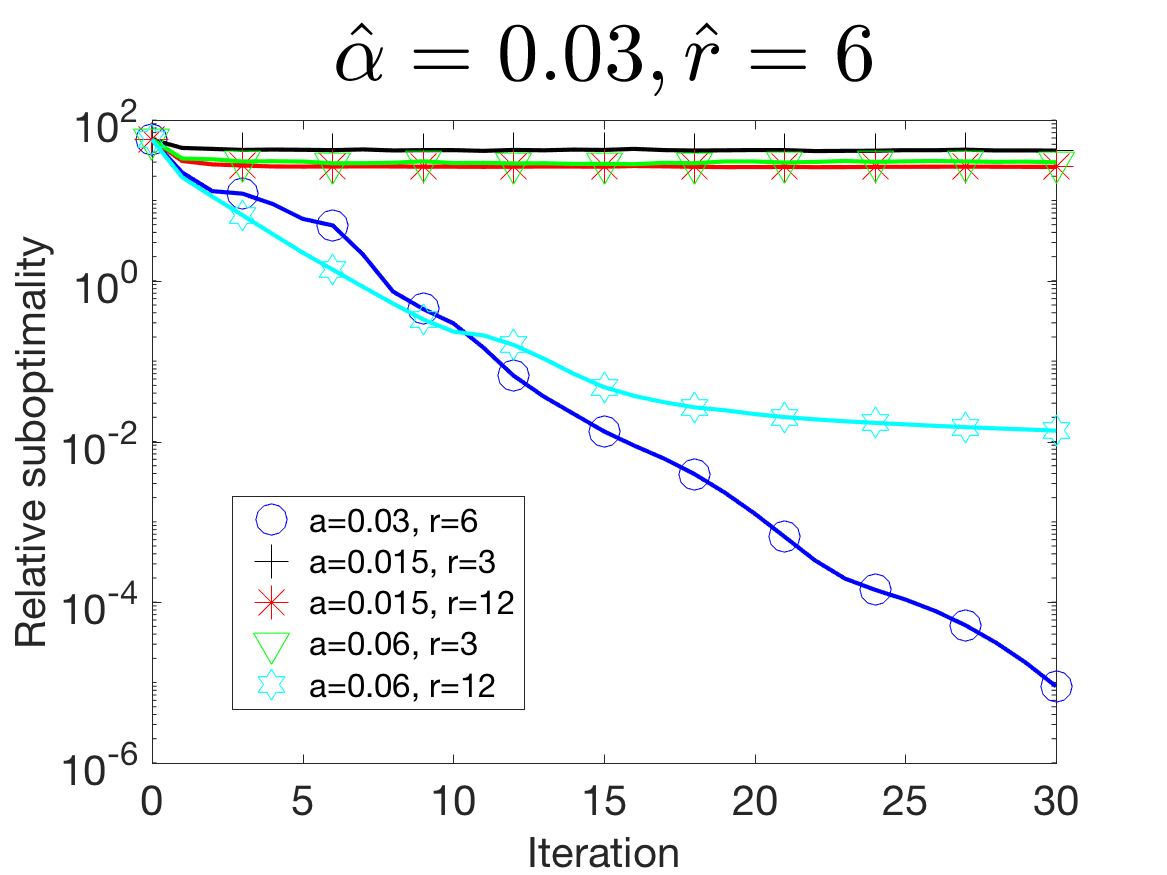}
        %\caption{ Residual vs. iteration  }\label{fig:bl_ex_flops}
\end{minipage}%
\begin{minipage}{0.25\textwidth}
  \centering
\includegraphics[width =  \textwidth ]{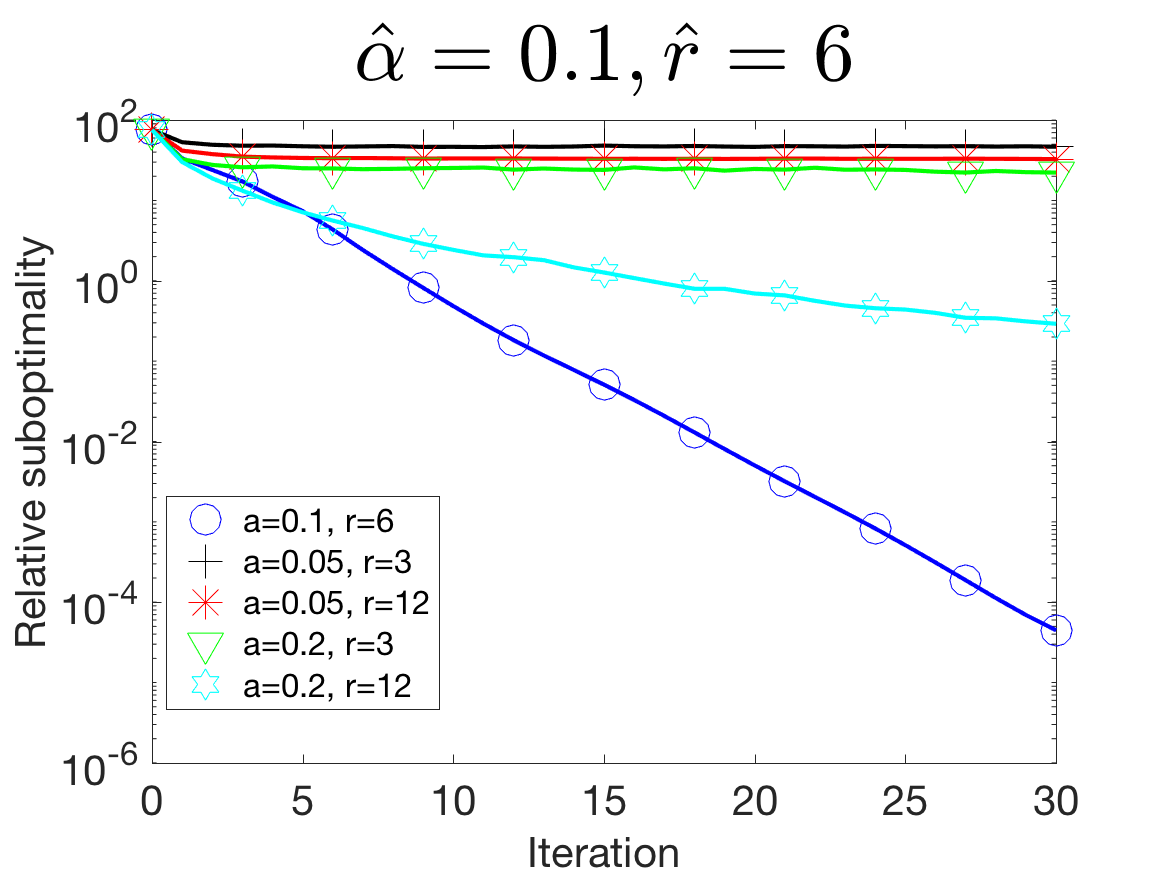}
        %\caption{ Residual vs. iteration  }\label{fig:bl_ex_flops}
\end{minipage}%
\begin{minipage}{0.25\textwidth}
  \centering
\includegraphics[width =  \textwidth ]{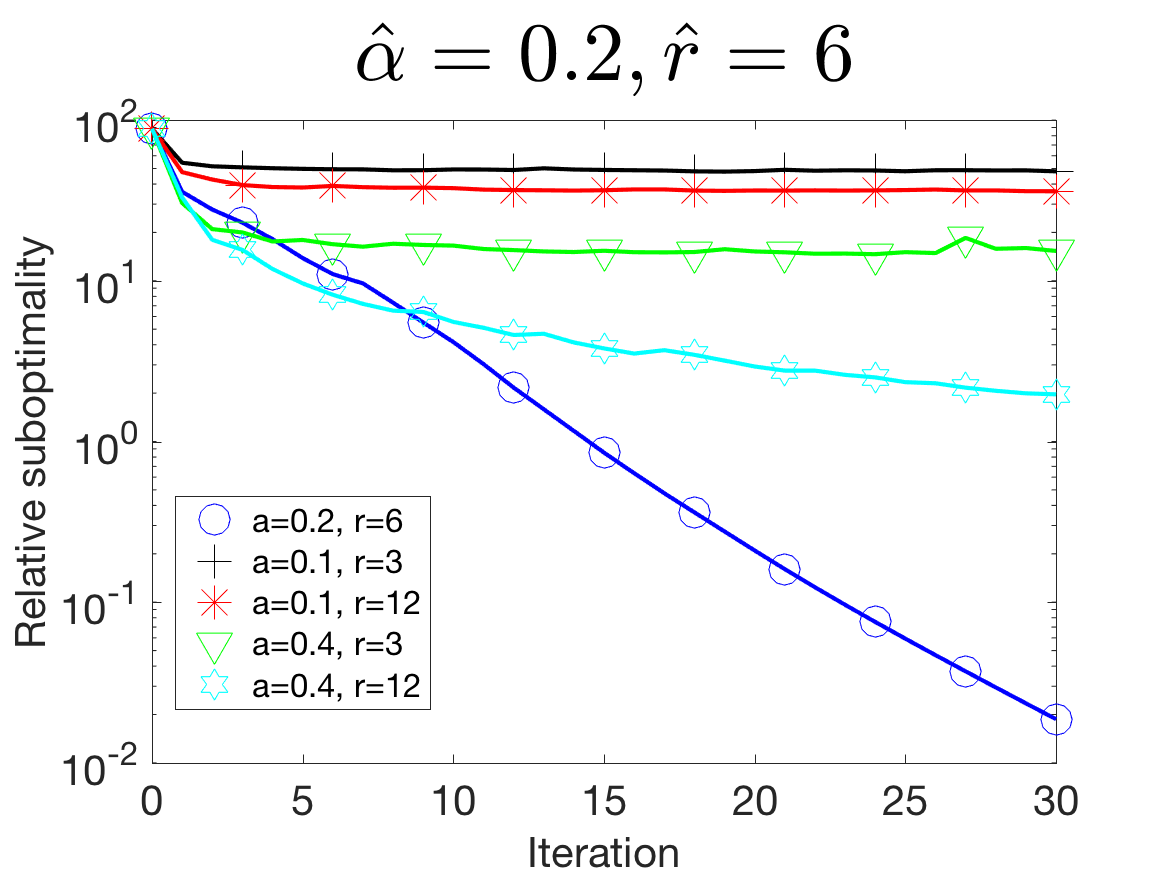}
        %\caption{ Residual vs. iteration  }\label{fig:bl_ex_flops}
\end{minipage}%
\begin{minipage}{0.25\textwidth}
  \centering
\includegraphics[width =  \textwidth ]{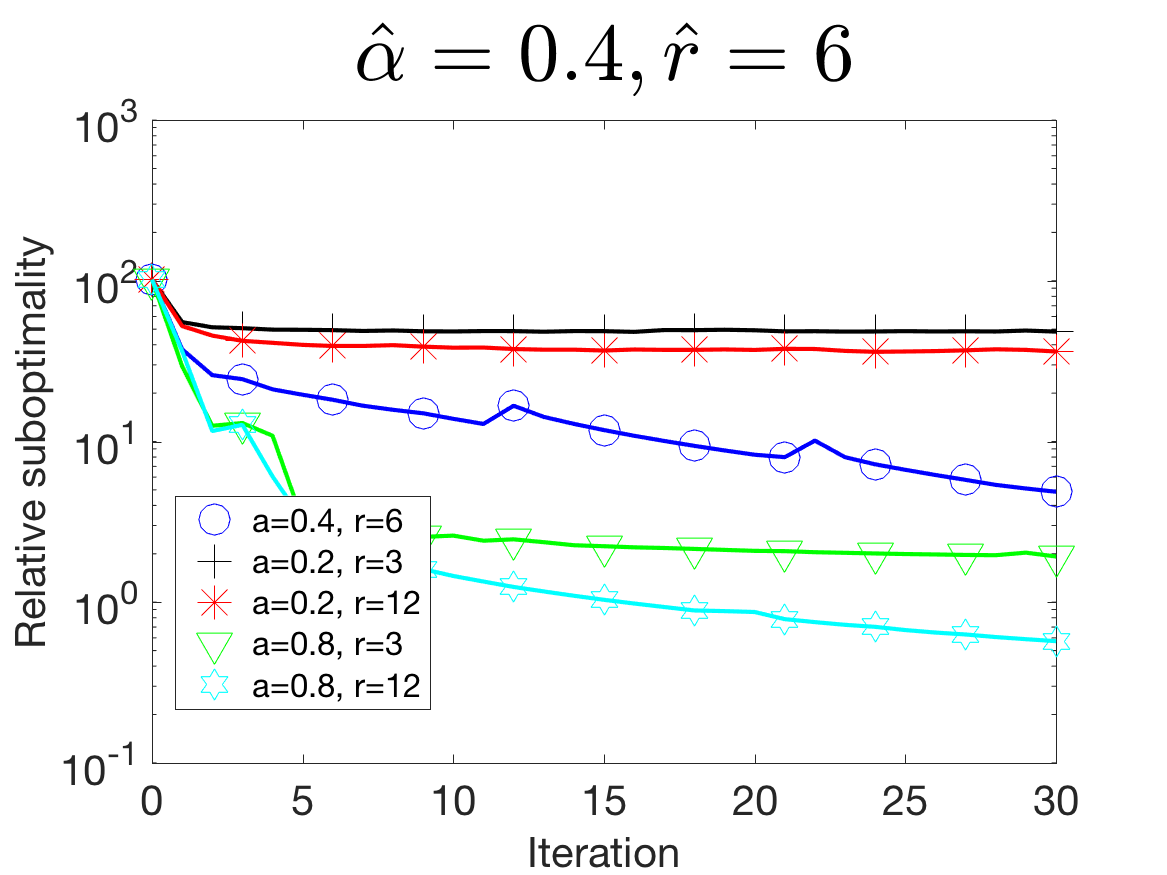}
        %\caption{ Residual vs. iteration  }\label{fig:bl_ex_flops}
\end{minipage}%
\\
\begin{minipage}{0.25\textwidth}
  \centering
\includegraphics[width =  \textwidth ]{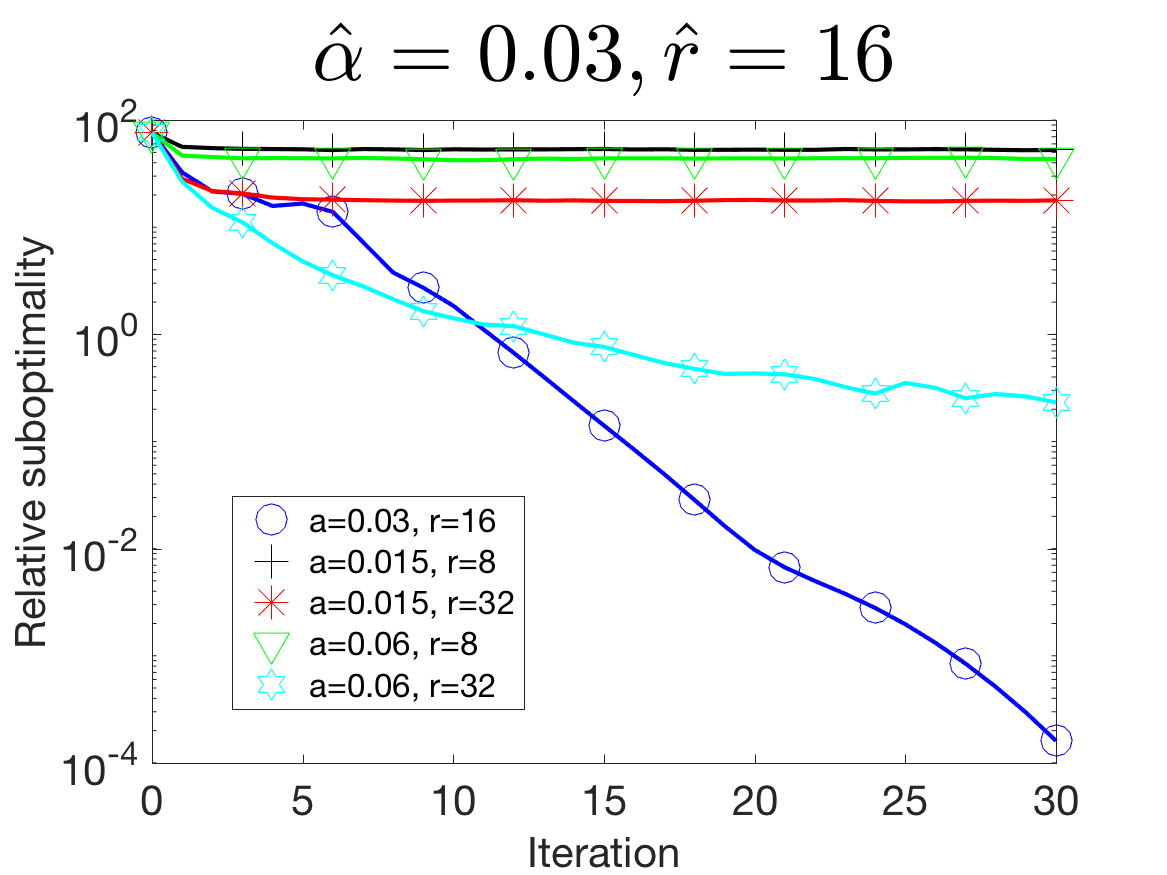}
        %\caption{ Residual vs. iteration  }\label{fig:bl_ex_flops}
\end{minipage}%
\begin{minipage}{0.25\textwidth}
  \centering
\includegraphics[width =  \textwidth ]{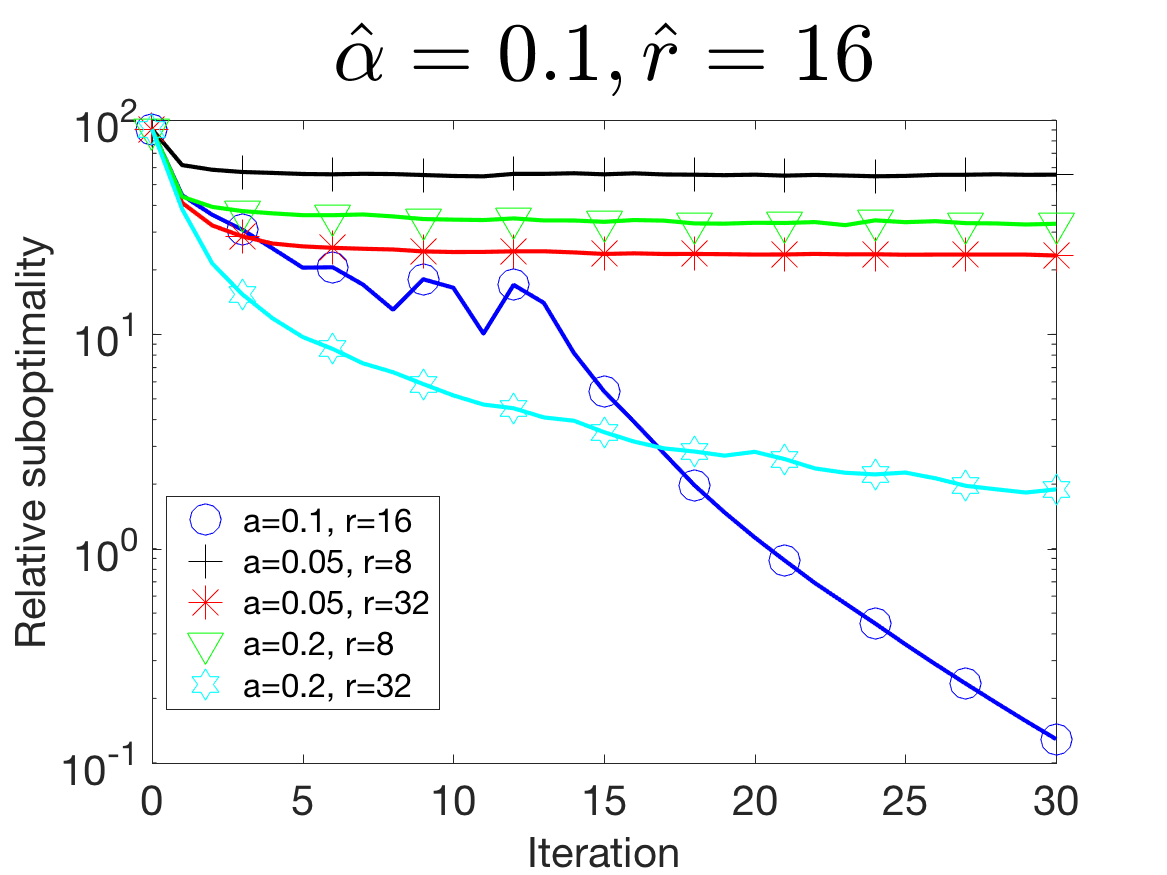}
        %\caption{ Residual vs. iteration  }\label{fig:bl_ex_flops}
\end{minipage}%
\begin{minipage}{0.25\textwidth}
  \centering
\includegraphics[width =  \textwidth ]{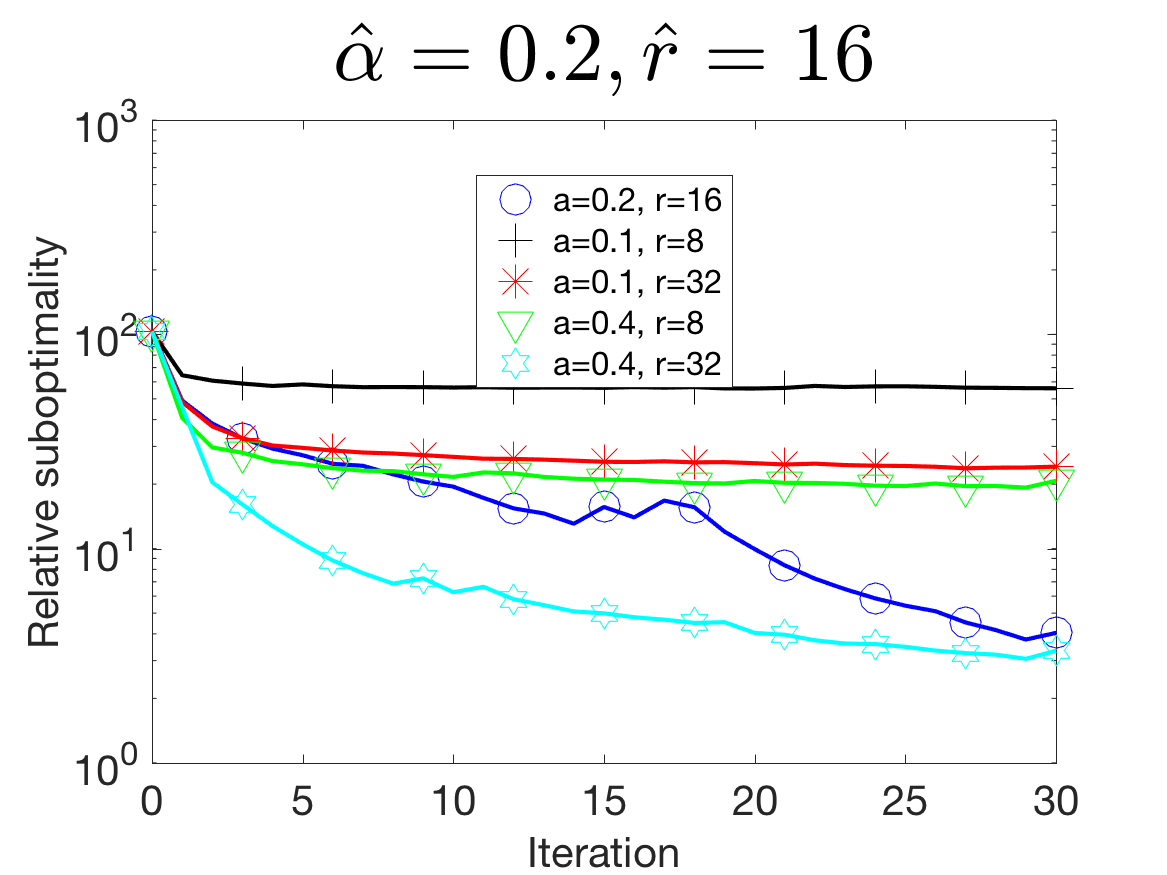}
        %\caption{ Residual vs. iteration  }\label{fig:bl_ex_flops}
\end{minipage}%
\begin{minipage}{0.25\textwidth}
  \centering
\includegraphics[width =  \textwidth ]{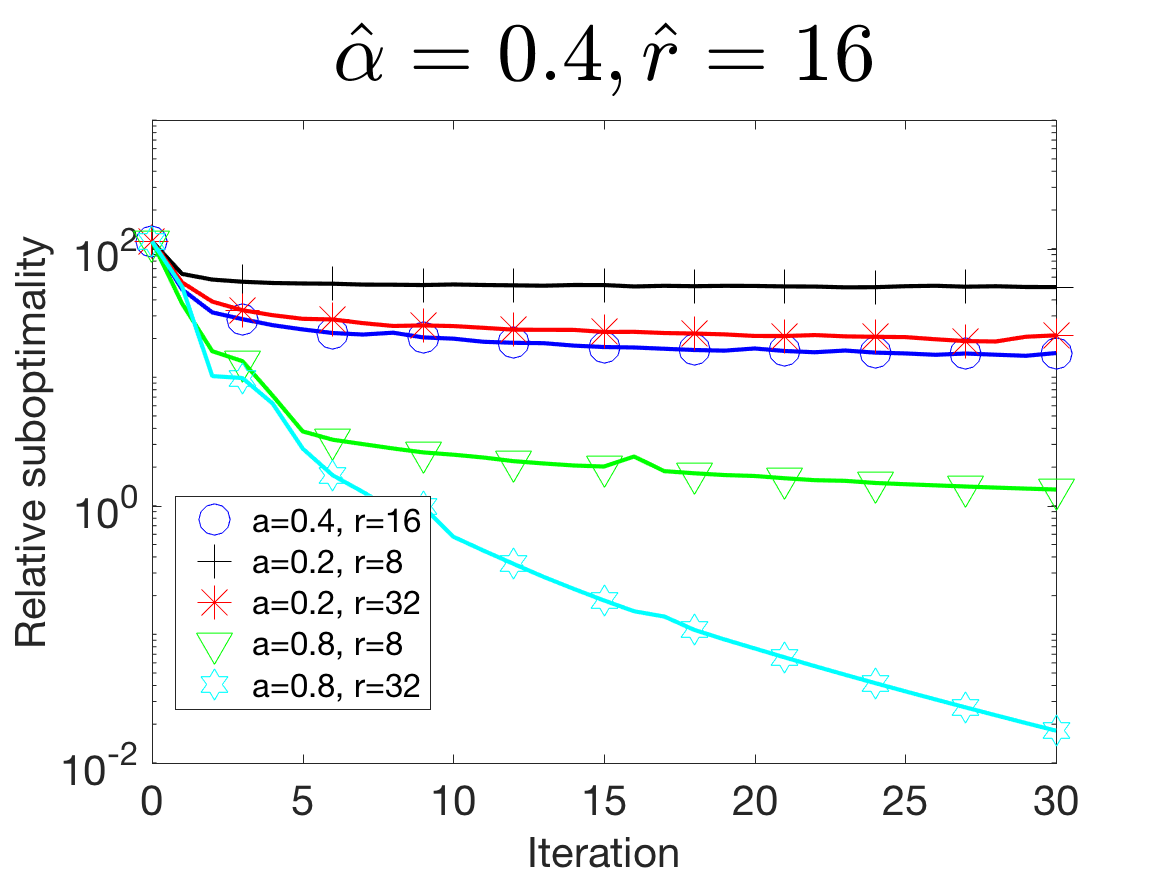}
        %\caption{ Residual vs. iteration  }\label{fig:bl_ex_flops}
\end{minipage}%
\\
\begin{minipage}{0.25\textwidth}
  \centering
\includegraphics[width =  \textwidth ]{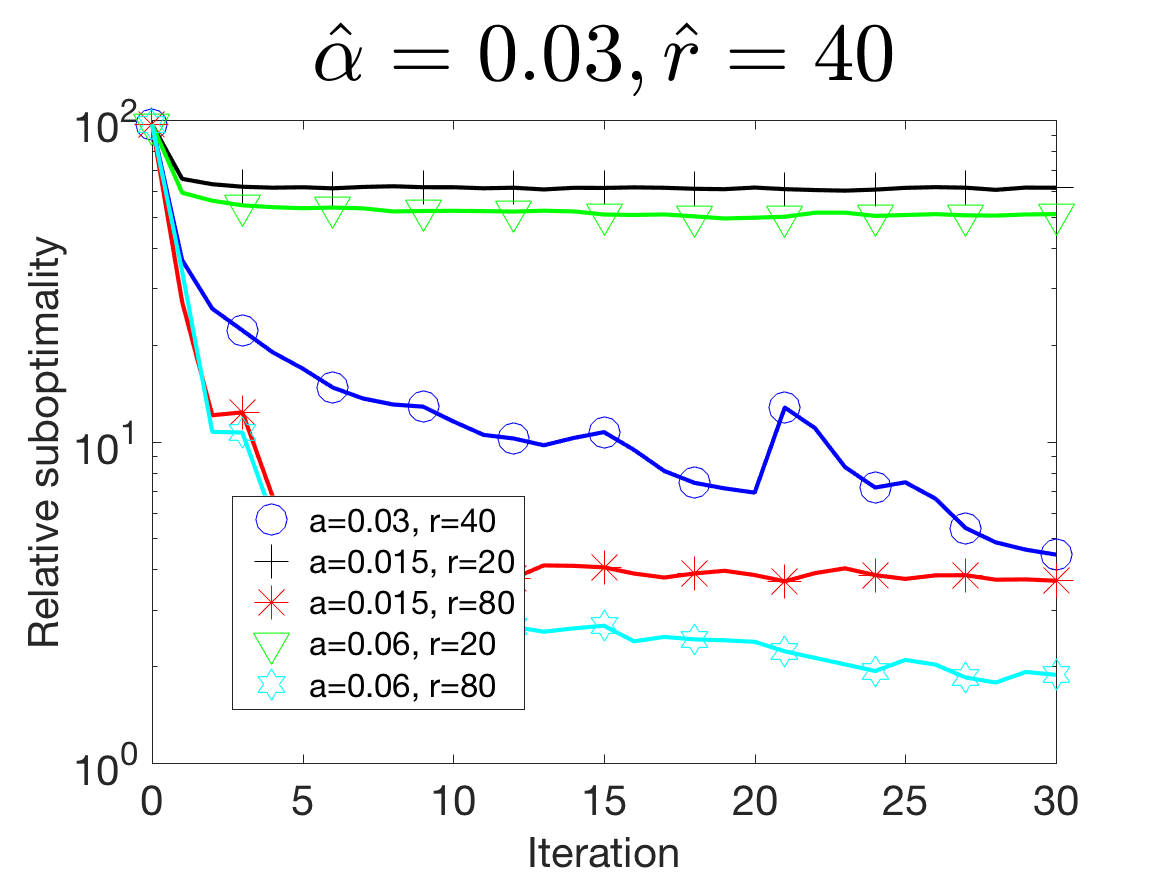}
        %\caption{ Residual vs. iteration  }\label{fig:bl_ex_flops}
\end{minipage}%
\begin{minipage}{0.25\textwidth}
  \centering
\includegraphics[width =  \textwidth ]{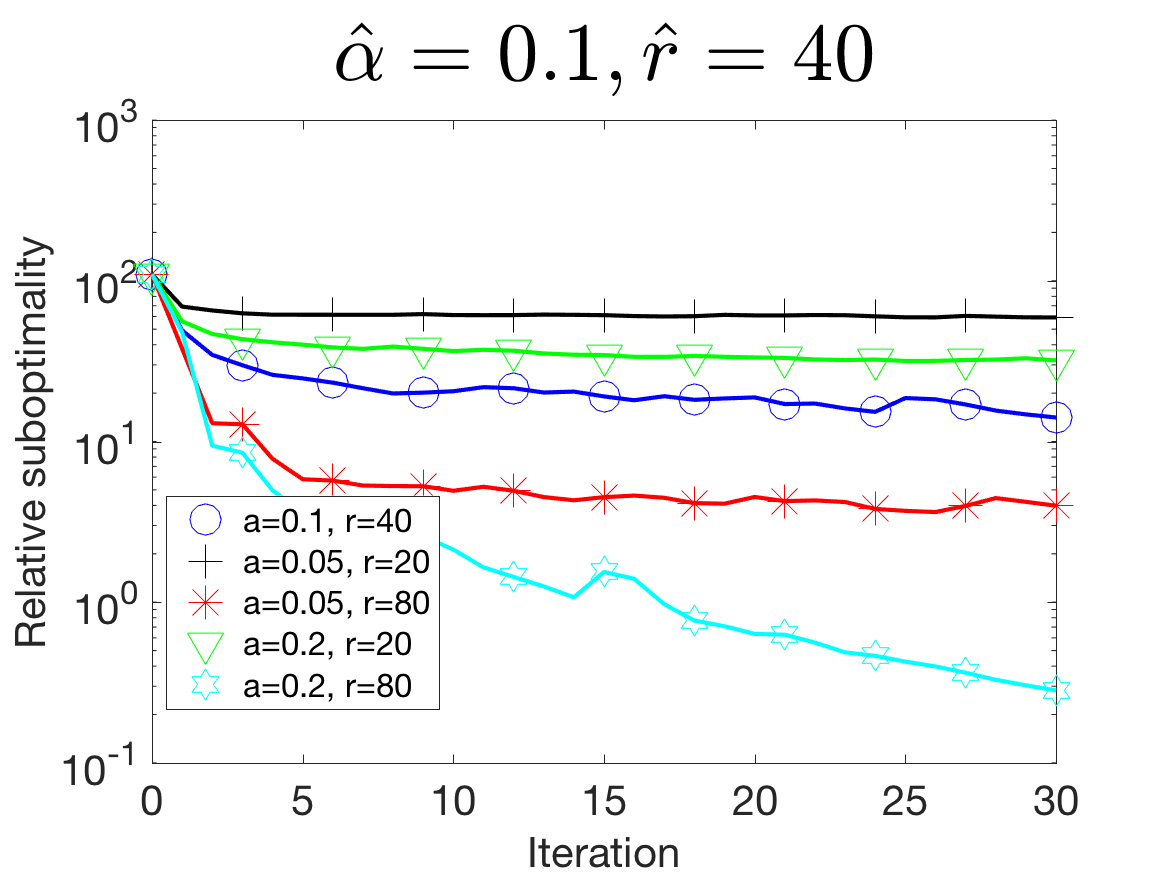}
        %\caption{ Residual vs. iteration  }\label{fig:bl_ex_flops}
\end{minipage}%
\begin{minipage}{0.25\textwidth}
  \centering
\includegraphics[width =  \textwidth ]{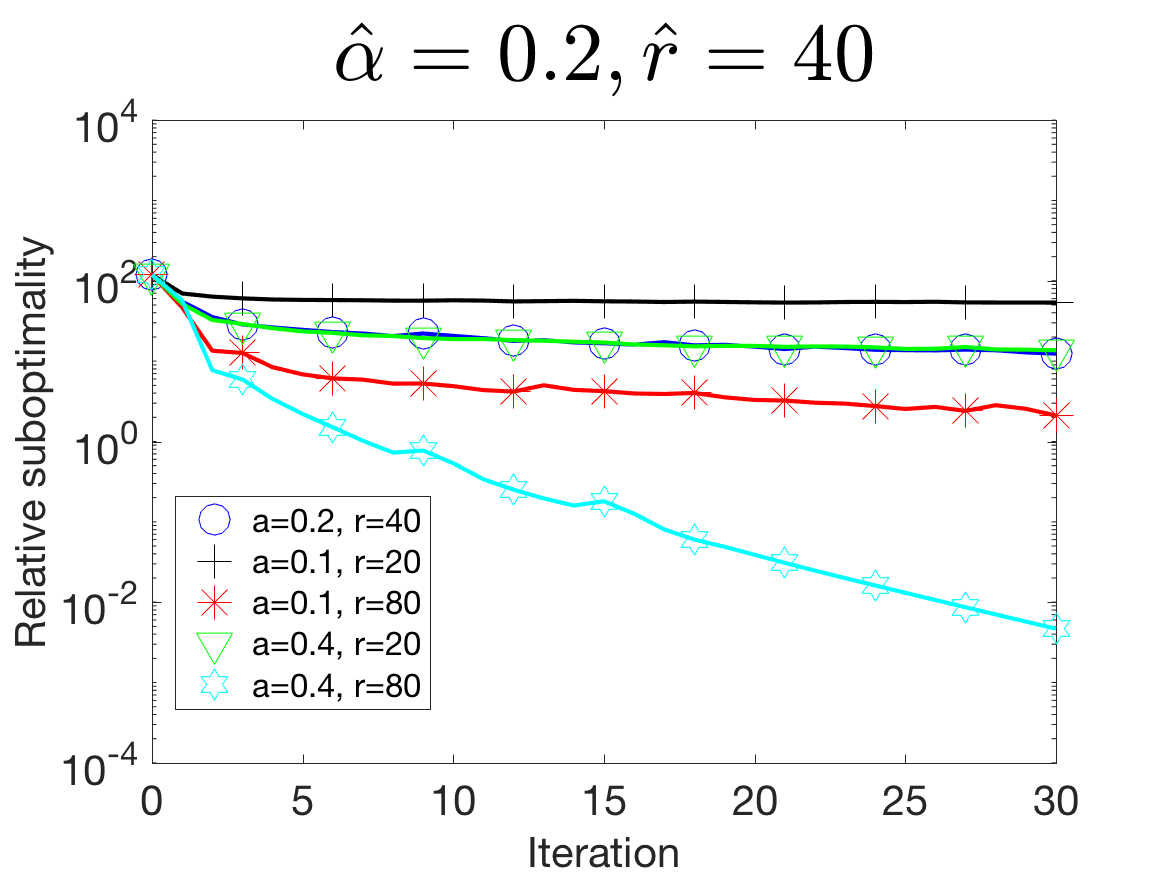}
        %\caption{ Residual vs. iteration  }\label{fig:bl_ex_flops}
\end{minipage}%
\begin{minipage}{0.25\textwidth}
  \centering
\includegraphics[width =  \textwidth ]{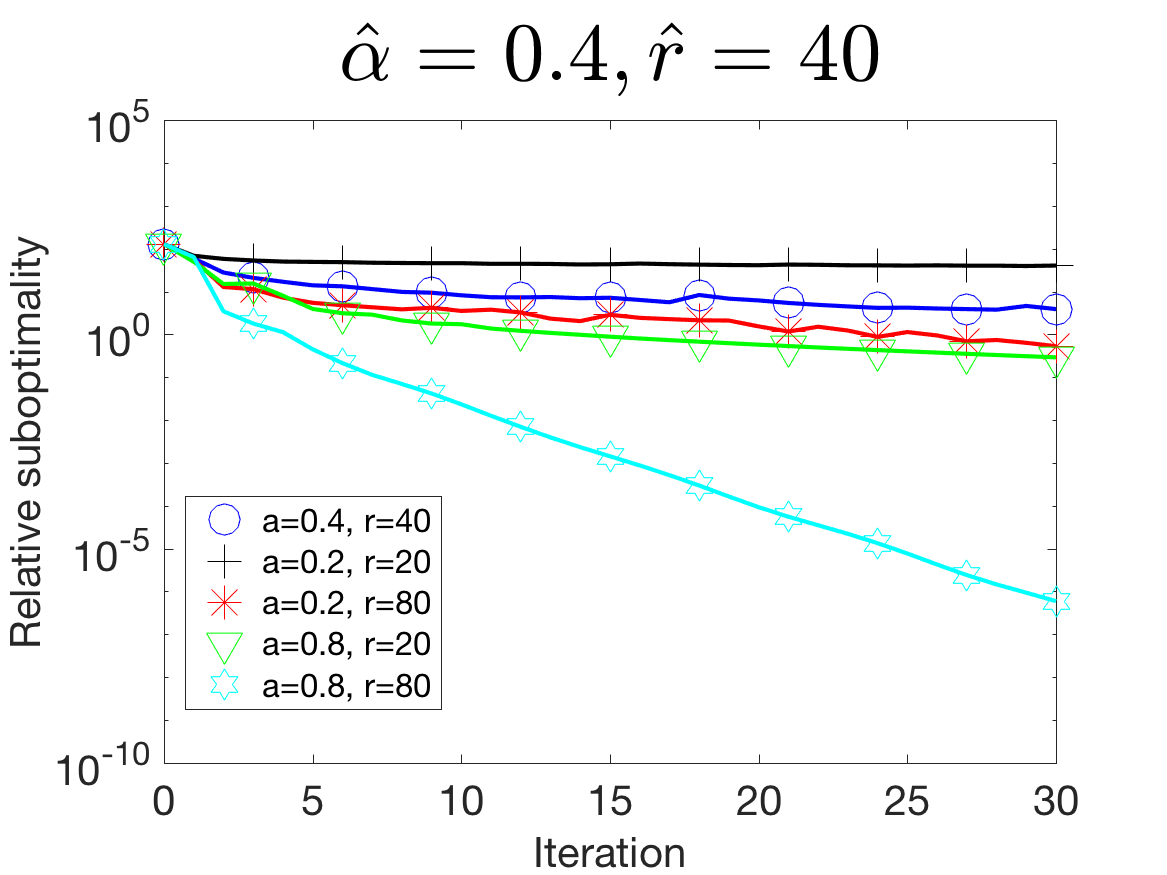}
        %\caption{ Residual vs. iteration  }\label{fig:bl_ex_flops}
\end{minipage}%
\caption{Sensitivity of Algorithm~\ref{alg:apg} with respect to the correct choice of target rank and target sparsity.}\label{fig:sens2}
\end{figure}

\subsection{Sensitivity to the choice of the starting point}

In the last experiment, we examine how the starting point influences the convergence rate. For each problem instance, we perform 50 independent runs of Algorithm~\ref{alg:apg} and report the best, worst and median performance. 

For simplicity, we consider only problems with known target rank and sparsity -- we generate random matrices $\tilde{L},\tilde{S}$ (with independent entries ${\cal N}(0,1)$), project them onto low rank and sparse constraint set respectively to obtain $\hat{L}, \hat{S}$ and set $A = \hat{L}+ \hat{S}$. Further, we set $a_k = b_k=0.5$, $\gamma =1.1$ and $m=n=100$. Figure~\ref{fig:sens3} shows the result. We can see that the convergence speed of Algorithm~\ref{alg:apg} is, in most cases, not influenced significantly by the starting point. Thus, the non-convex nature of the problem is surprisingly not causing any issues. Lastly, the convergence rate of  Algorithm~\ref{alg:apg} is faster for small values of $\alpha,r$, which is often the most interesting case in terms of the practical application.

\begin{figure}[h]
\centering
\begin{minipage}{0.25\textwidth}
  \centering
\includegraphics[width =  \textwidth ]{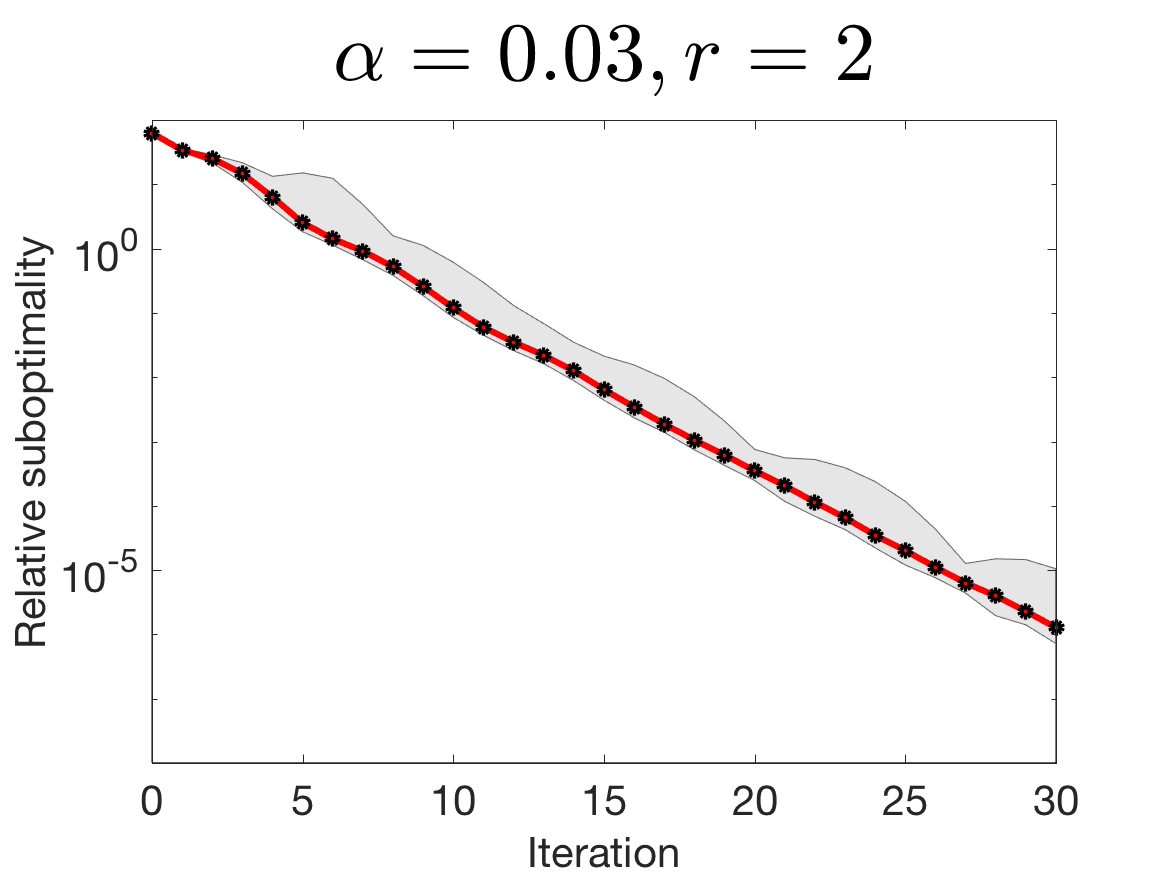}
        %\caption{ Residual vs. iteration  }\label{fig:bl_ex_flops}
\end{minipage}%
\begin{minipage}{0.25\textwidth}
  \centering
\includegraphics[width =  \textwidth ]{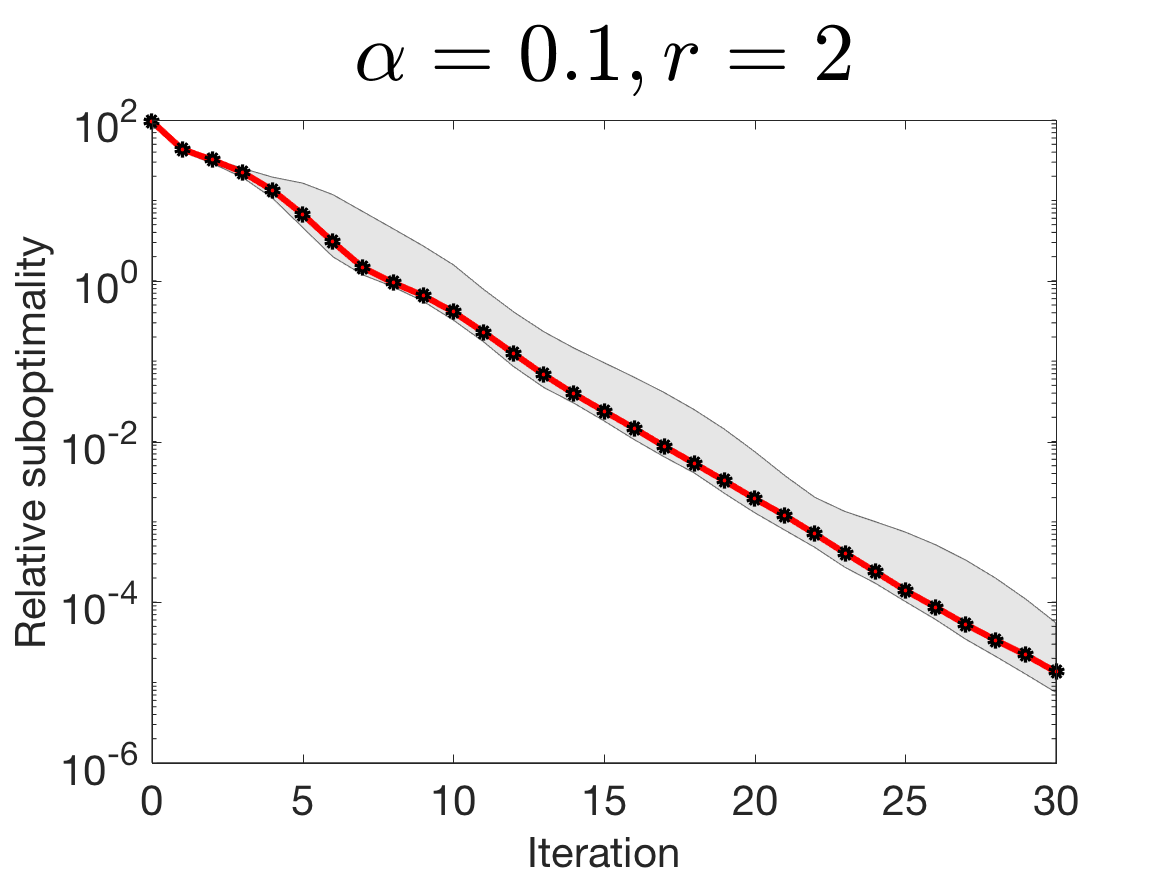}
        %\caption{ Residual vs. iteration  }\label{fig:bl_ex_flops}
\end{minipage}%
\begin{minipage}{0.25\textwidth}
  \centering
\includegraphics[width =  \textwidth ]{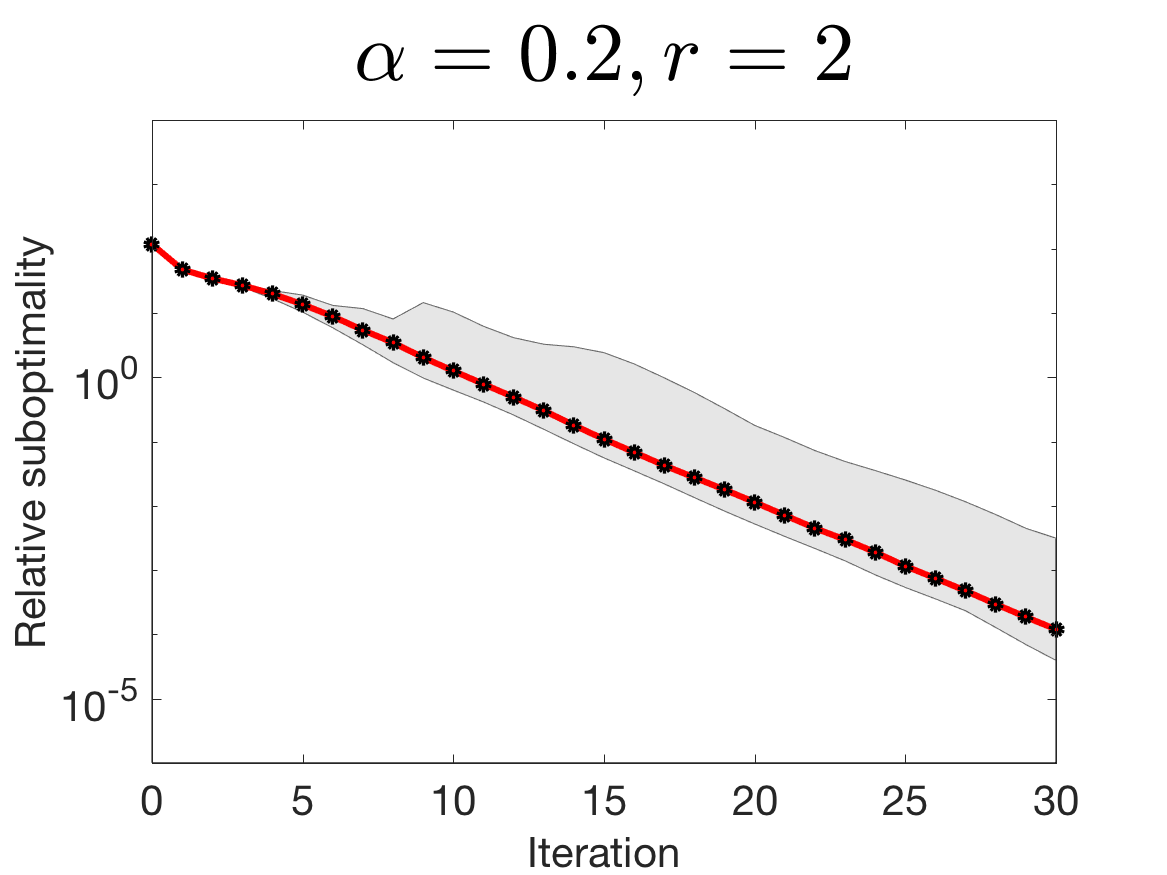}
        %\caption{ Residual vs. iteration  }\label{fig:bl_ex_flops}
\end{minipage}%
\begin{minipage}{0.25\textwidth}
  \centering
\includegraphics[width =  \textwidth ]{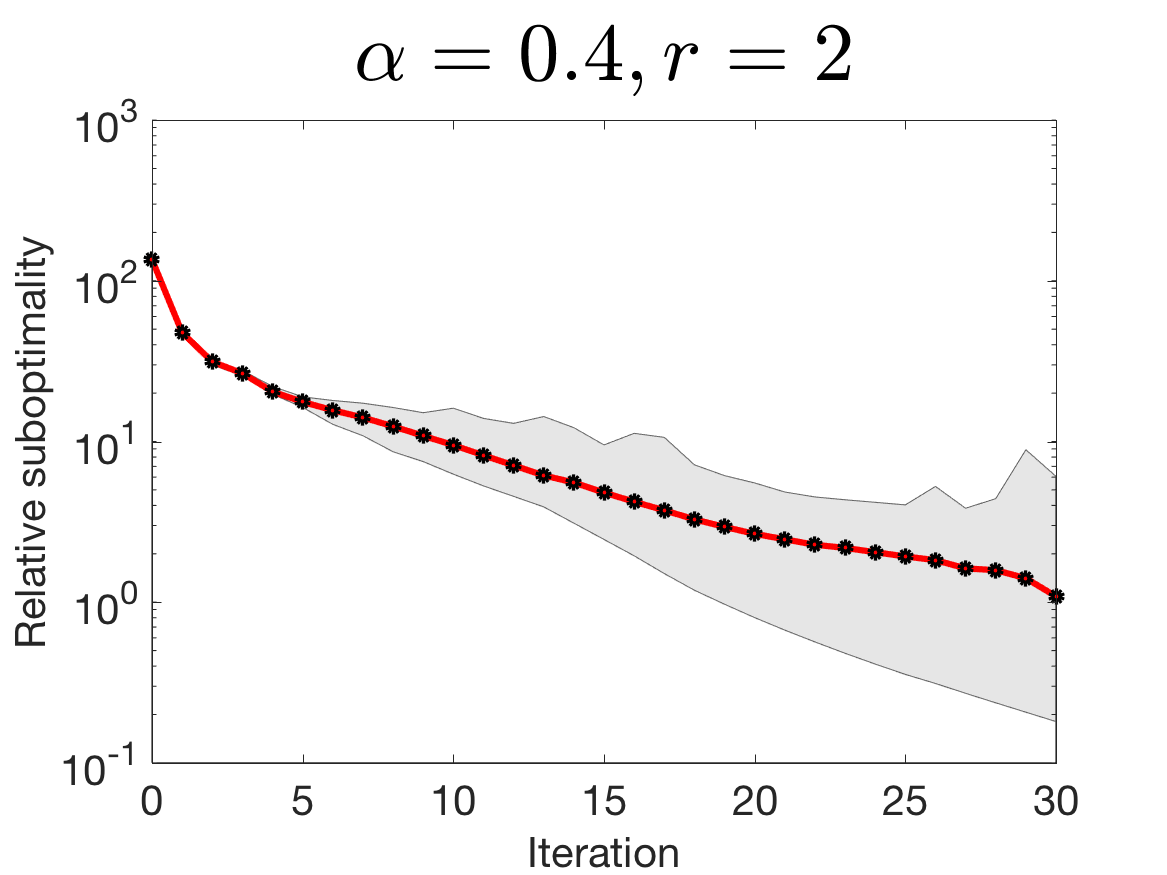}
        %\caption{ Residual vs. iteration  }\label{fig:bl_ex_flops}
\end{minipage}%
\\
\begin{minipage}{0.25\textwidth}
  \centering
\includegraphics[width =  \textwidth ]{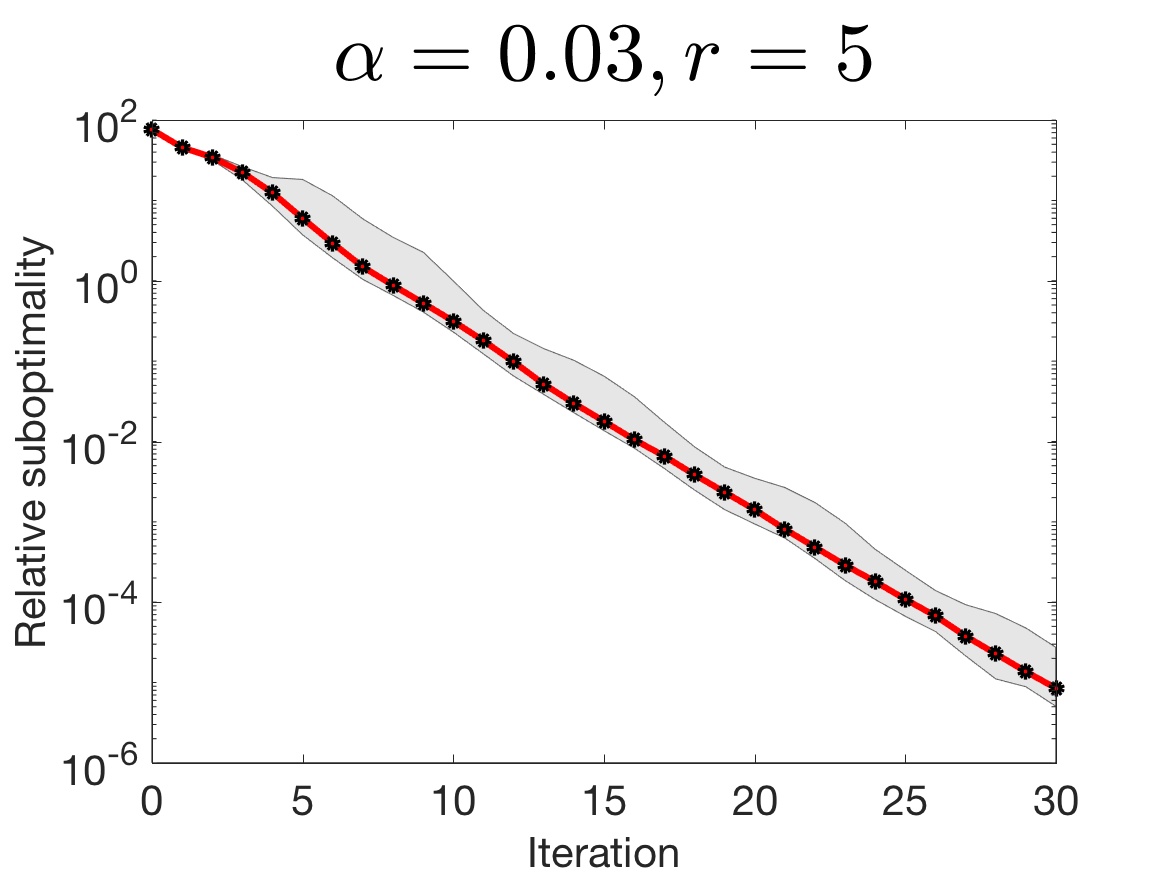}
        %\caption{ Residual vs. iteration  }\label{fig:bl_ex_flops}
\end{minipage}%
\begin{minipage}{0.25\textwidth}
  \centering
\includegraphics[width =  \textwidth ]{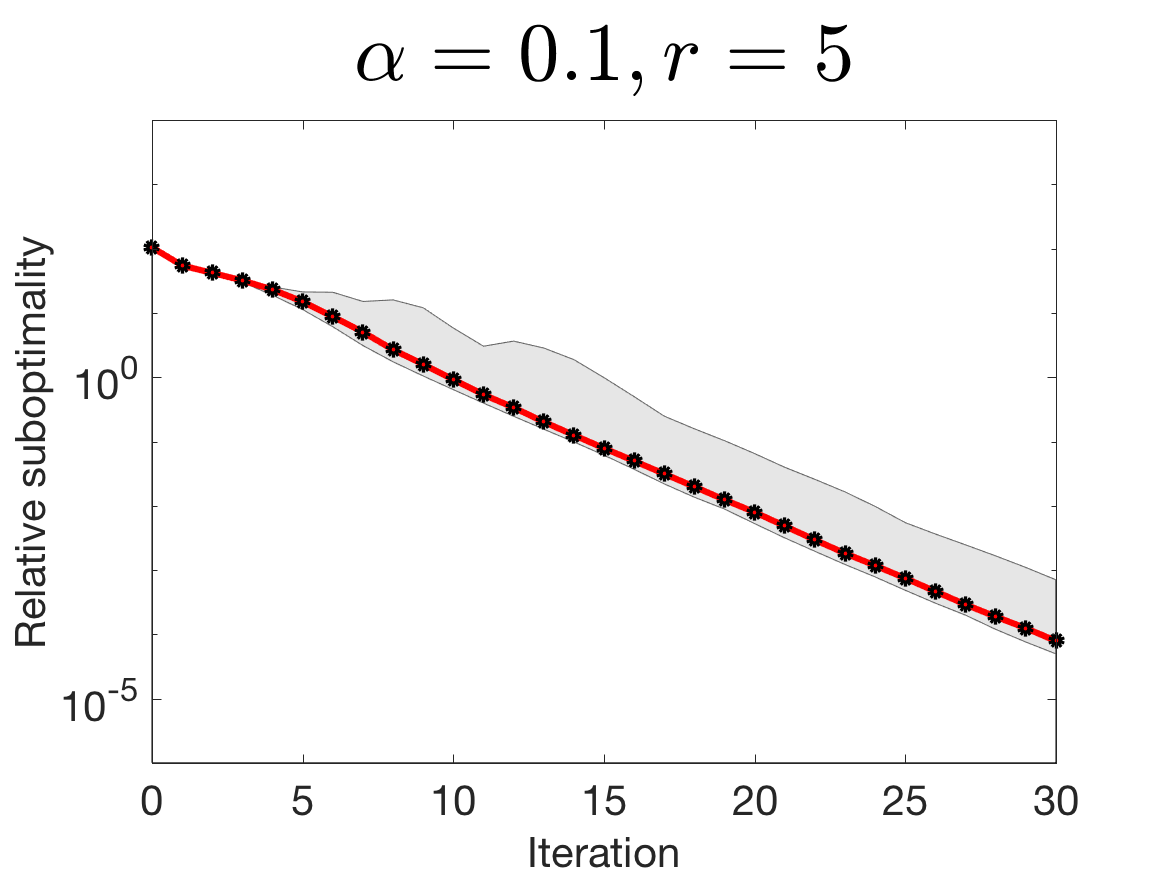}
        %\caption{ Residual vs. iteration  }\label{fig:bl_ex_flops}
\end{minipage}%
\begin{minipage}{0.25\textwidth}
  \centering
\includegraphics[width =  \textwidth ]{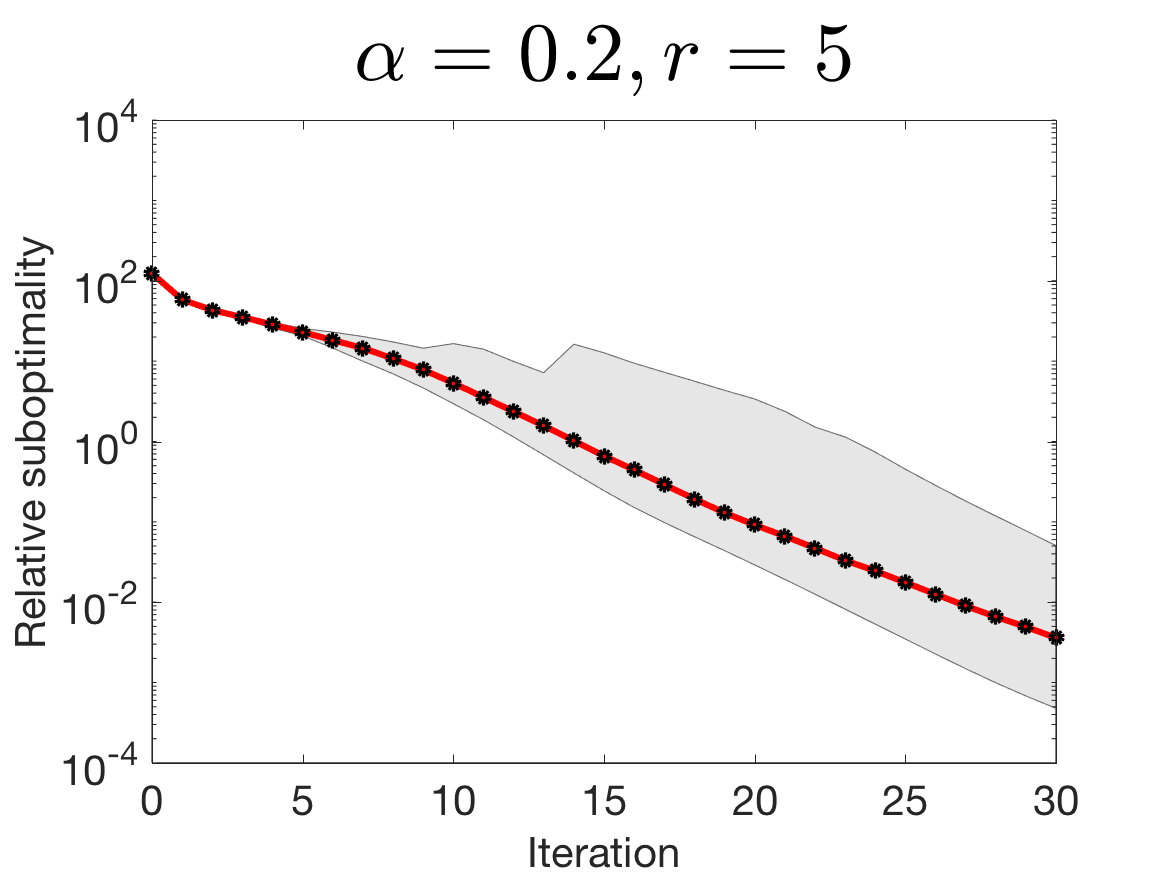}
        %\caption{ Residual vs. iteration  }\label{fig:bl_ex_flops}
\end{minipage}%
\begin{minipage}{0.25\textwidth}
  \centering
\includegraphics[width =  \textwidth ]{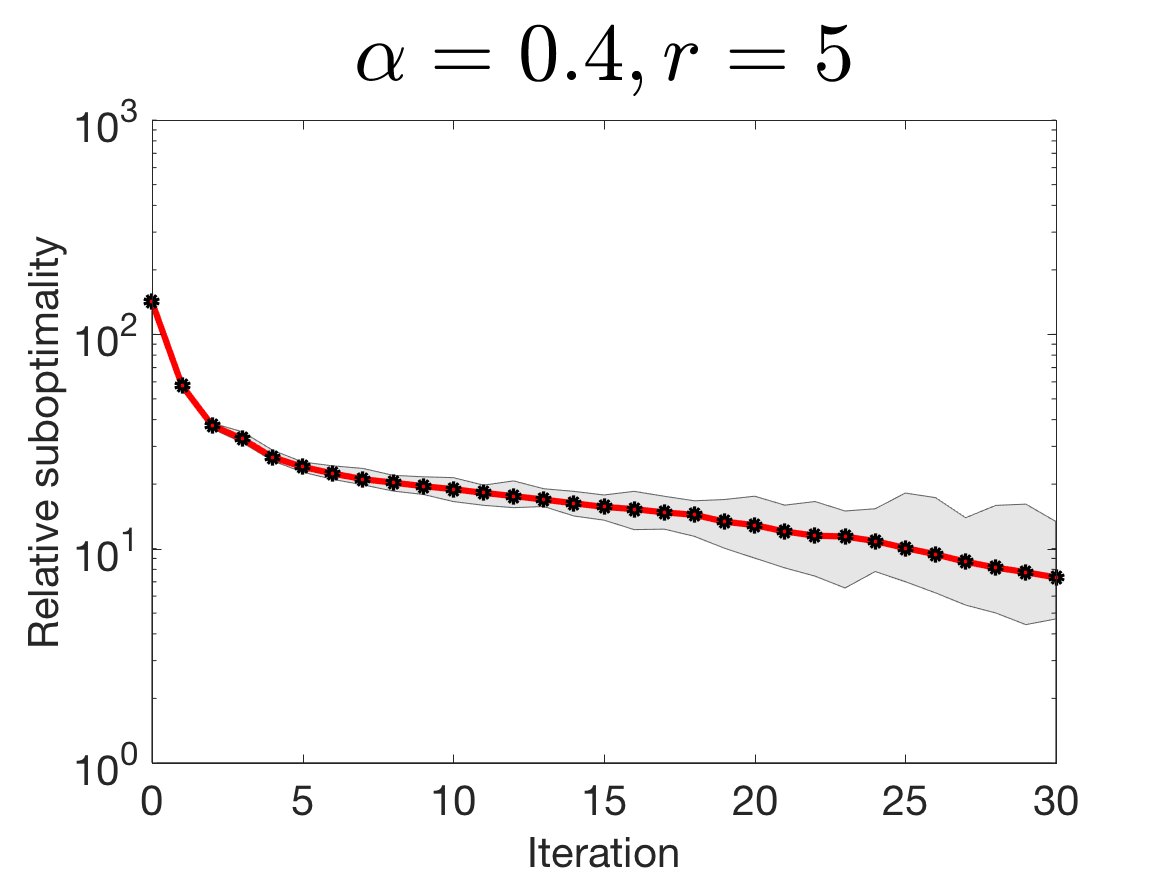}
        %\caption{ Residual vs. iteration  }\label{fig:bl_ex_flops}
\end{minipage}%
\\
\begin{minipage}{0.25\textwidth}
  \centering
\includegraphics[width =  \textwidth ]{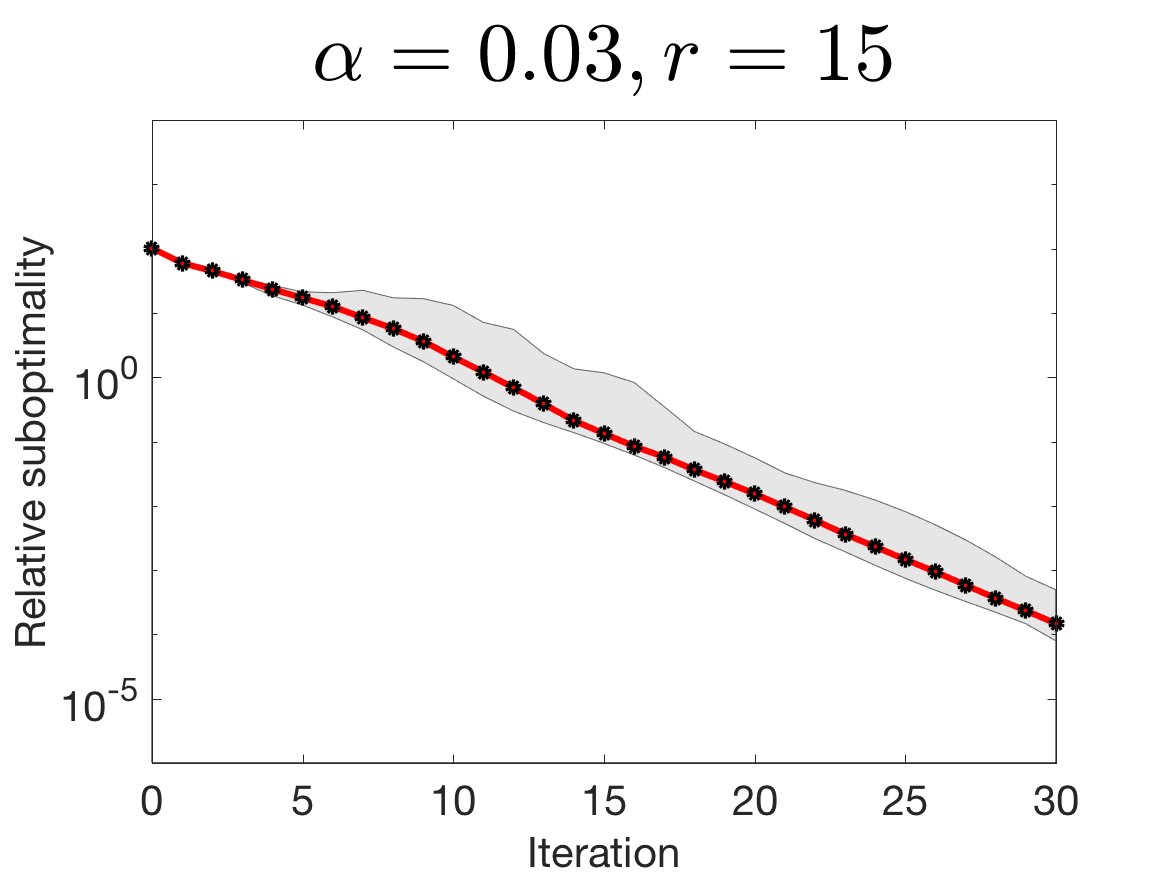}
        %\caption{ Residual vs. iteration  }\label{fig:bl_ex_flops}
\end{minipage}%
\begin{minipage}{0.25\textwidth}
  \centering
\includegraphics[width =  \textwidth ]{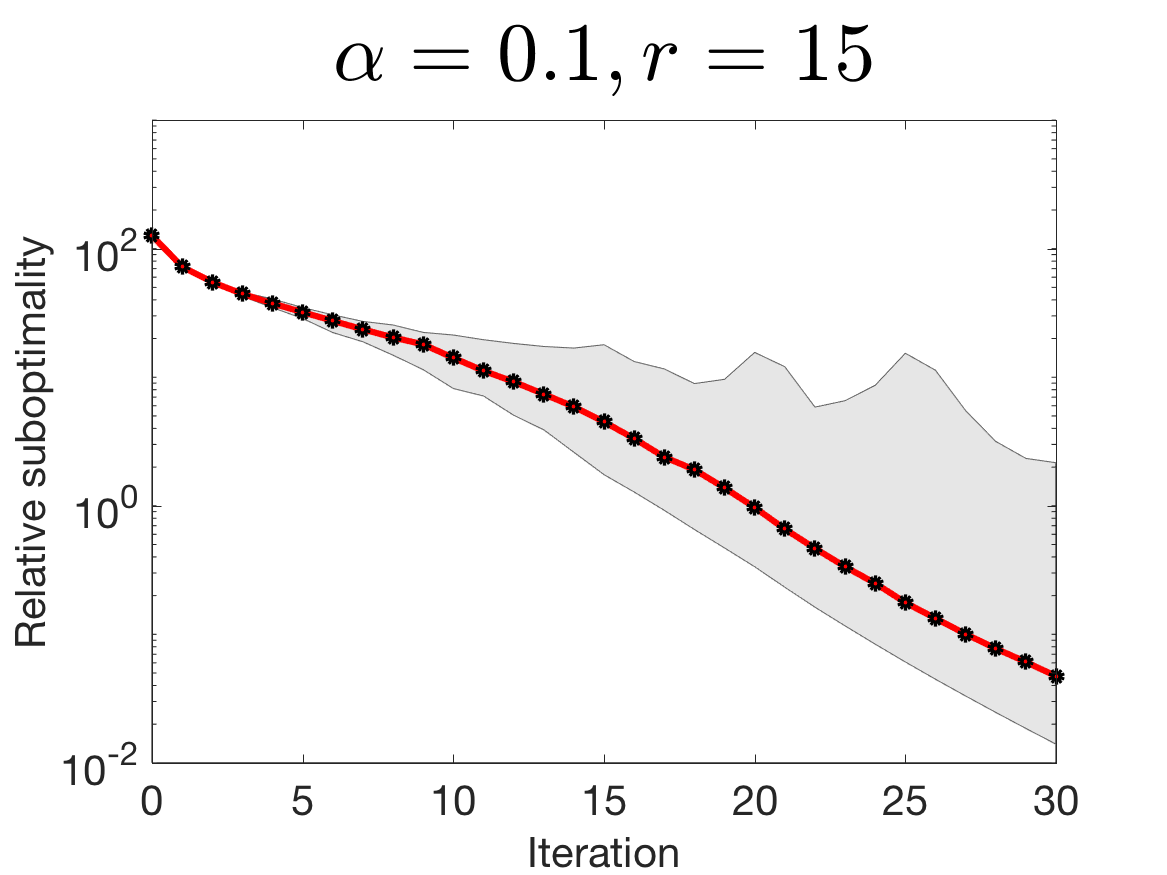}
        %\caption{ Residual vs. iteration  }\label{fig:bl_ex_flops}
\end{minipage}%
\begin{minipage}{0.25\textwidth}
  \centering
\includegraphics[width =  \textwidth ]{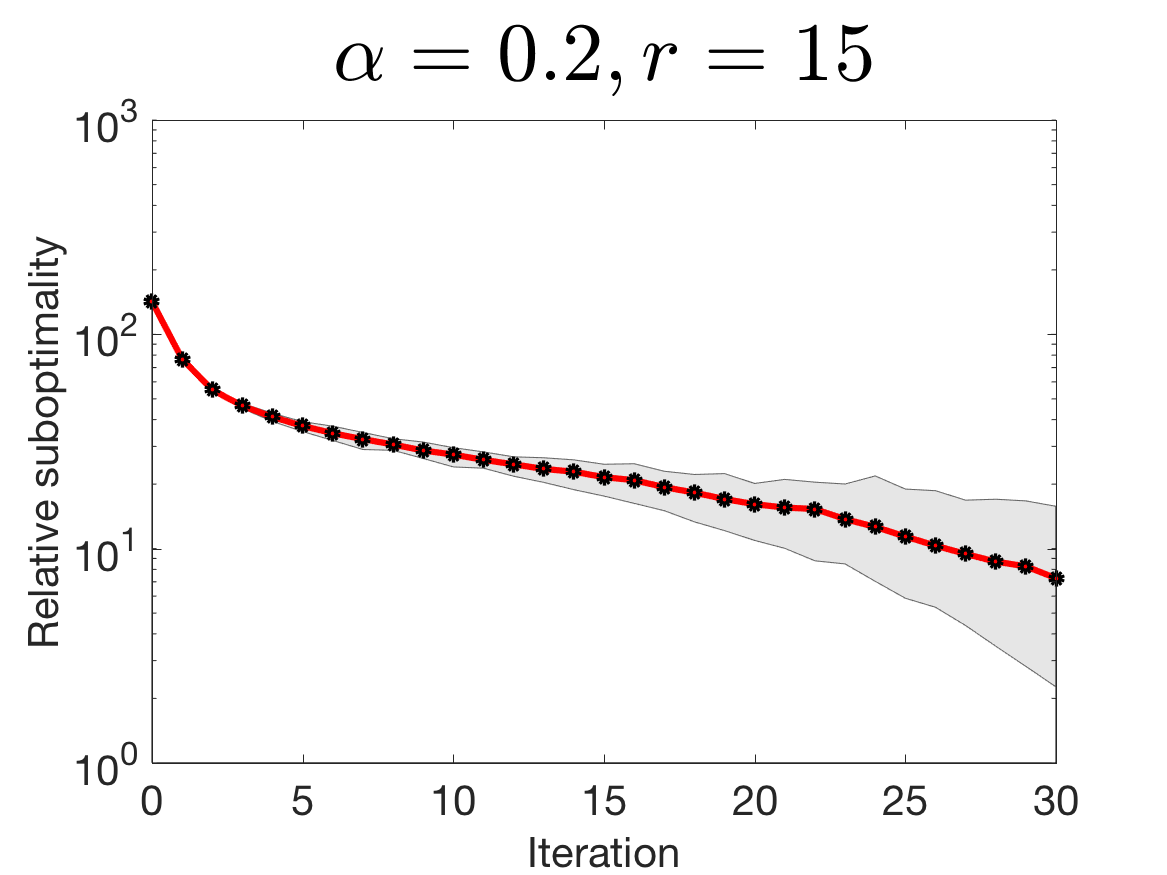}
        %\caption{ Residual vs. iteration  }\label{fig:bl_ex_flops}
\end{minipage}%
\begin{minipage}{0.25\textwidth}
  \centering
\includegraphics[width =  \textwidth ]{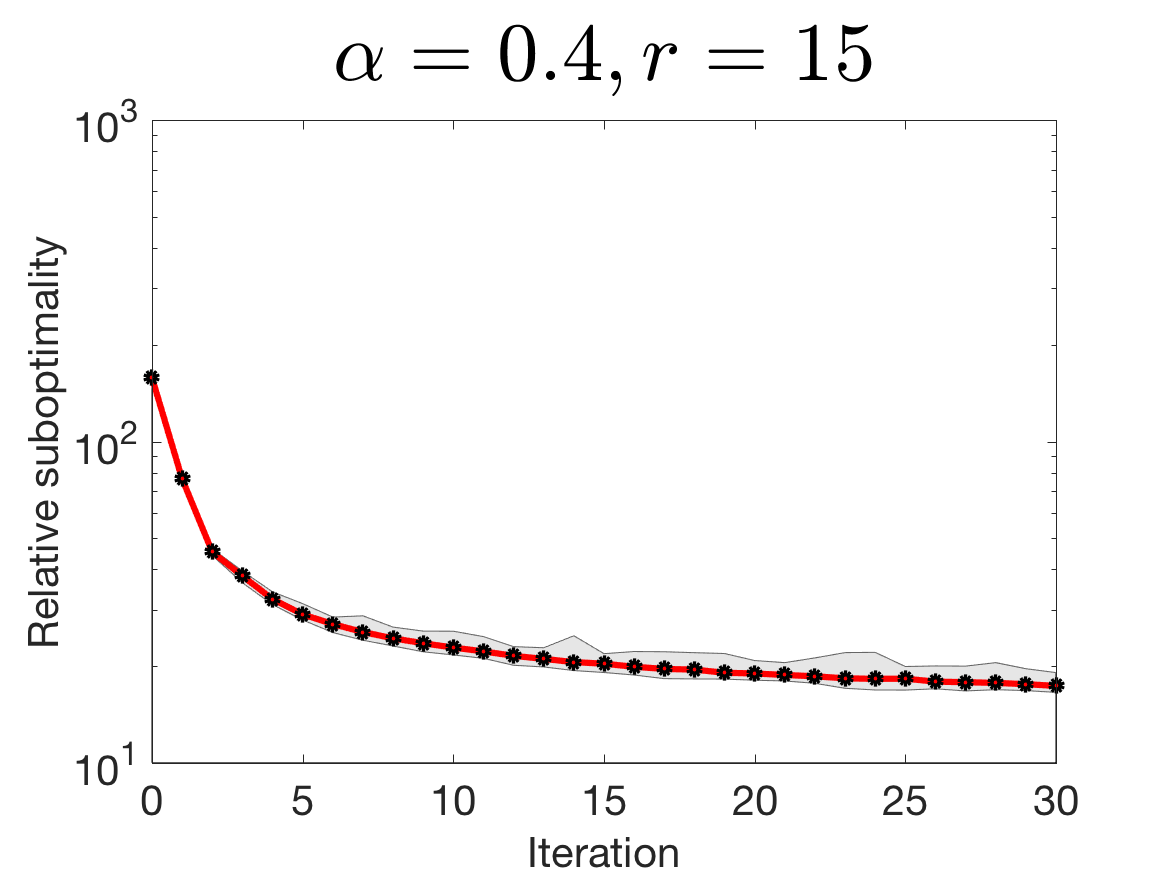}
        %\caption{ Residual vs. iteration  }\label{fig:bl_ex_flops}
\end{minipage}%
\\
\begin{minipage}{0.25\textwidth}
  \centering
\includegraphics[width =  \textwidth ]{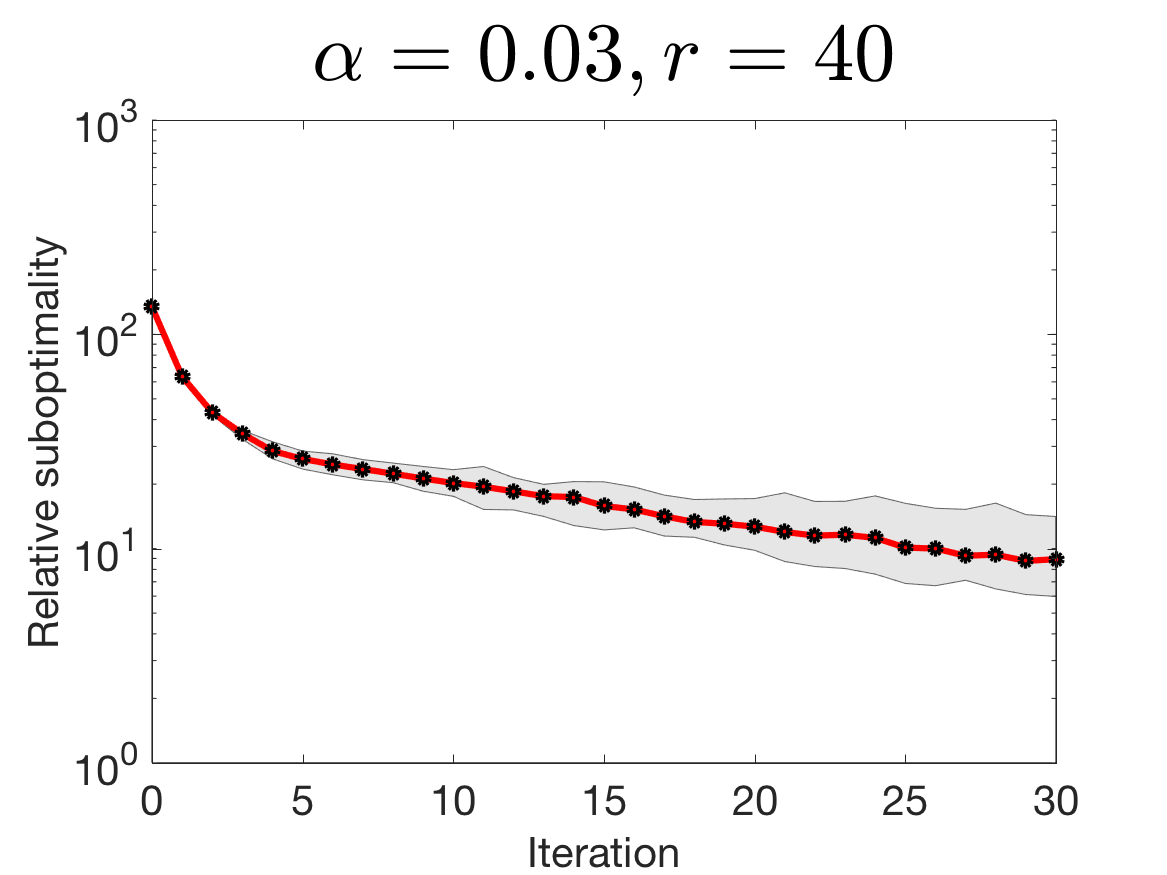}
        %\caption{ Residual vs. iteration  }\label{fig:bl_ex_flops}
\end{minipage}%
\begin{minipage}{0.25\textwidth}
  \centering
\includegraphics[width =  \textwidth ]{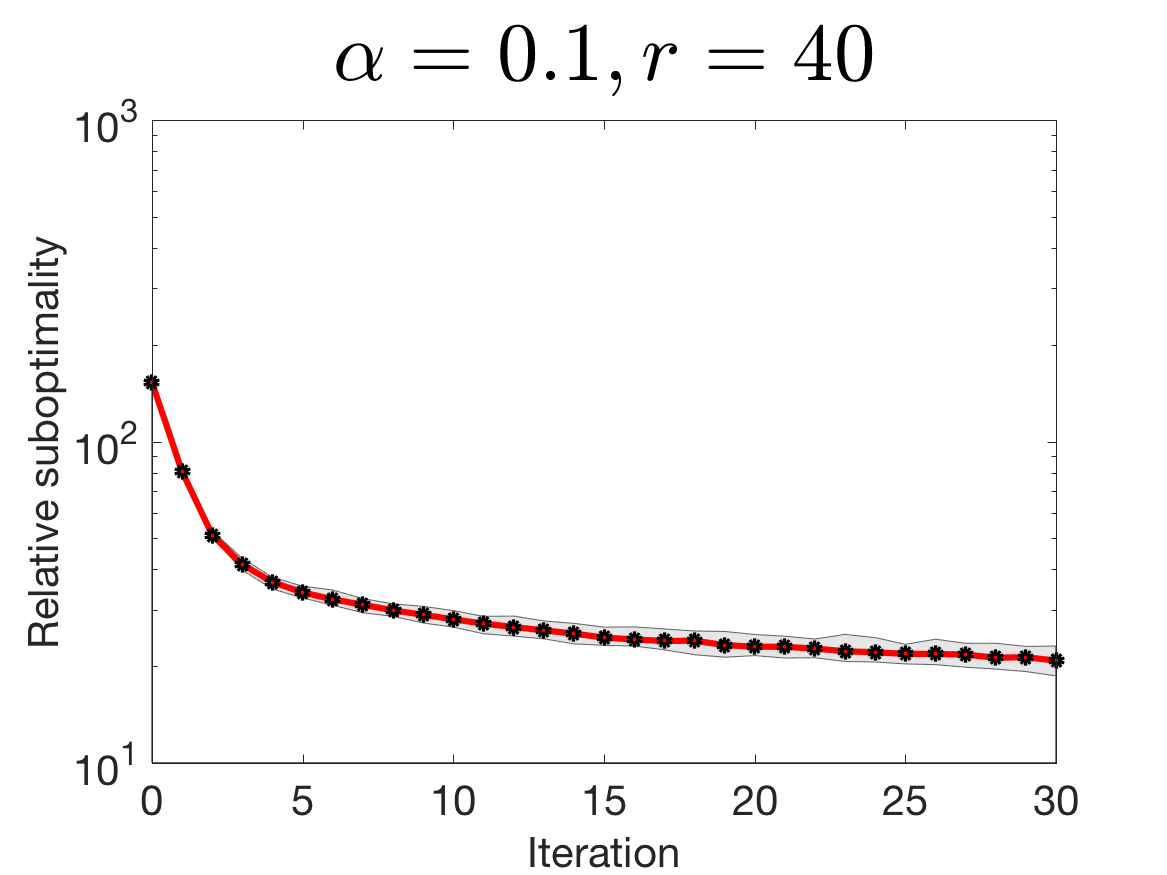}
        %\caption{ Residual vs. iteration  }\label{fig:bl_ex_flops}
\end{minipage}%
\begin{minipage}{0.25\textwidth}
  \centering
\includegraphics[width =  \textwidth ]{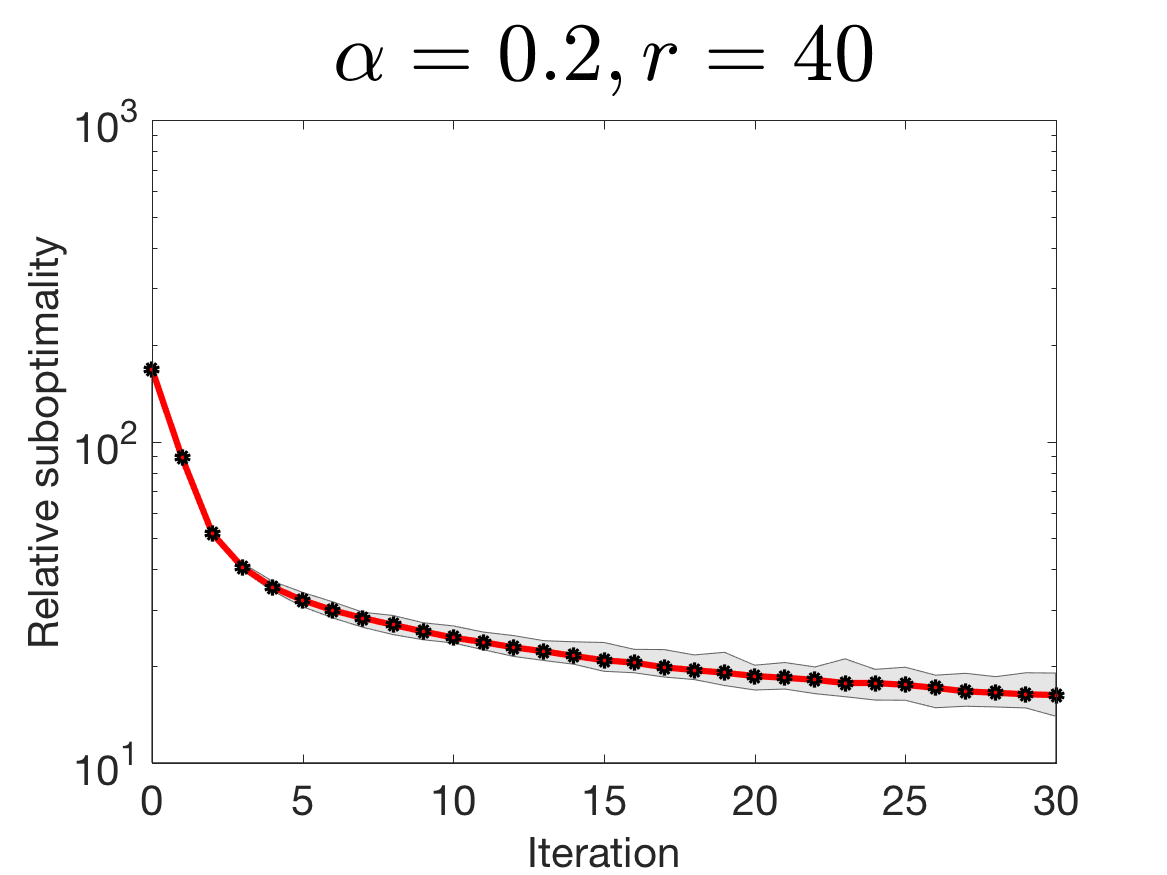}
        %\caption{ Residual vs. iteration  }\label{fig:bl_ex_flops}
\end{minipage}%
\begin{minipage}{0.25\textwidth}
  \centering
\includegraphics[width =  \textwidth ]{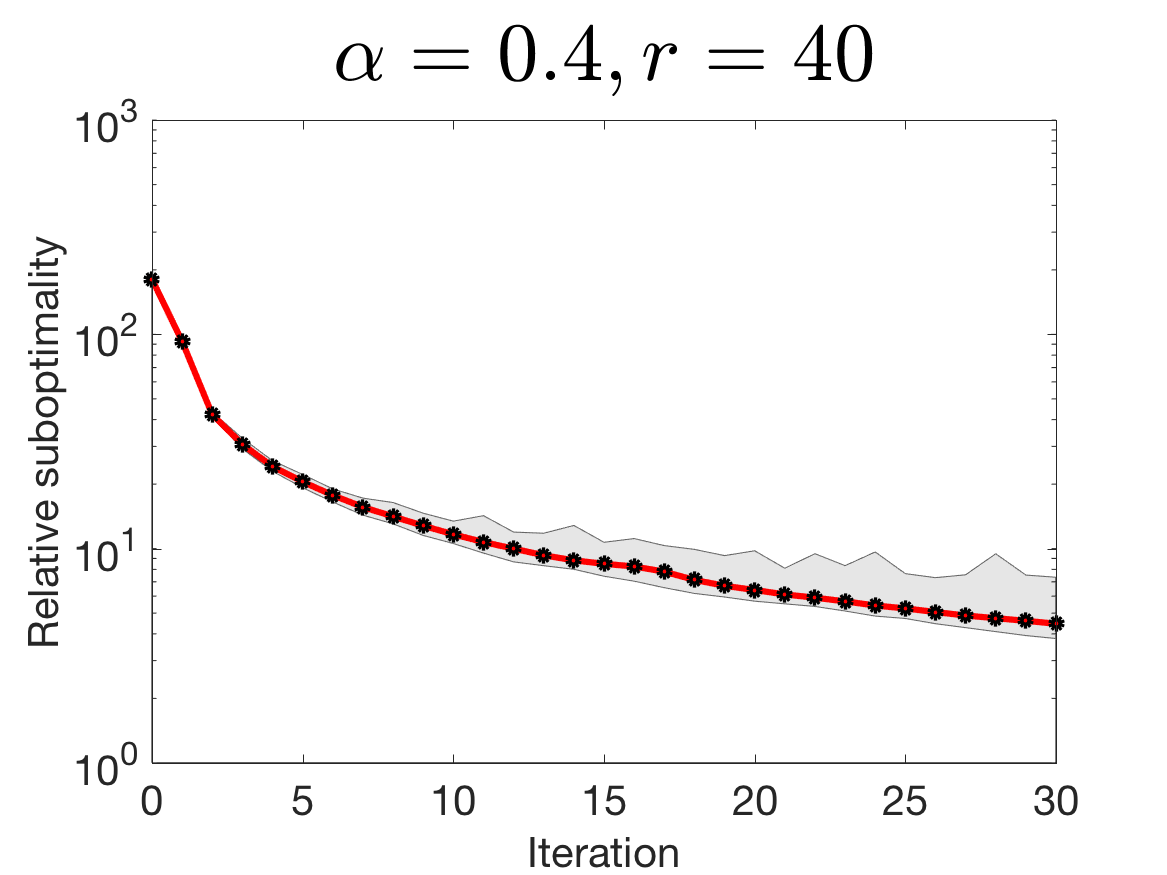}
        %\caption{ Residual vs. iteration  }\label{fig:bl_ex_flops}
\end{minipage}%
\caption{ Sensitivity of Algorithm~\ref{alg:apg} with respect to the starting point.}\label{fig:sens3}
\end{figure}

\begin{figure}[h]
    \centering
    \begin{minipage}{0.32\textwidth}
    \includegraphics[width=\textwidth]{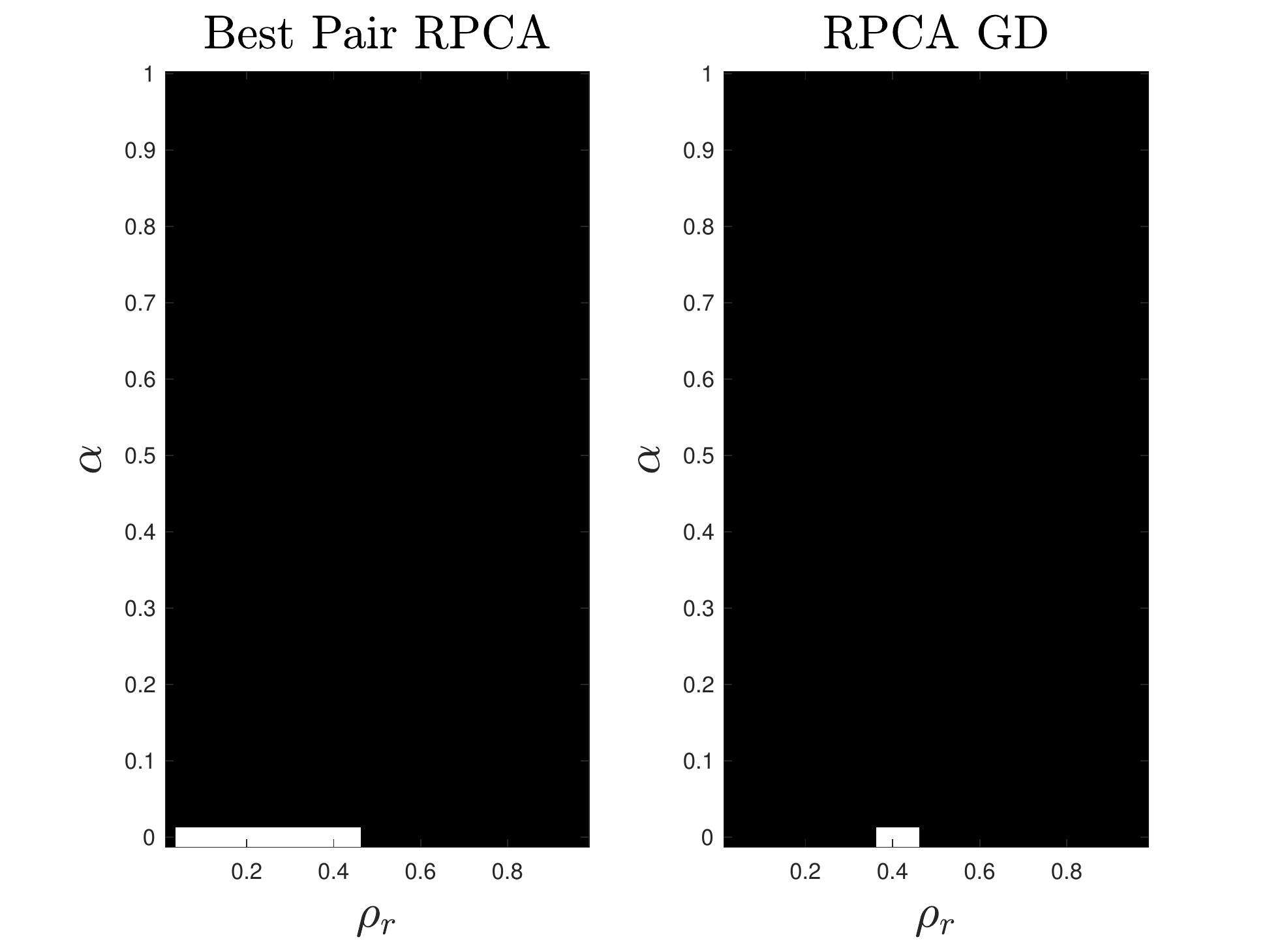}
      \end{minipage}
    \begin{minipage}{0.32\textwidth}
    \includegraphics[width = \textwidth]{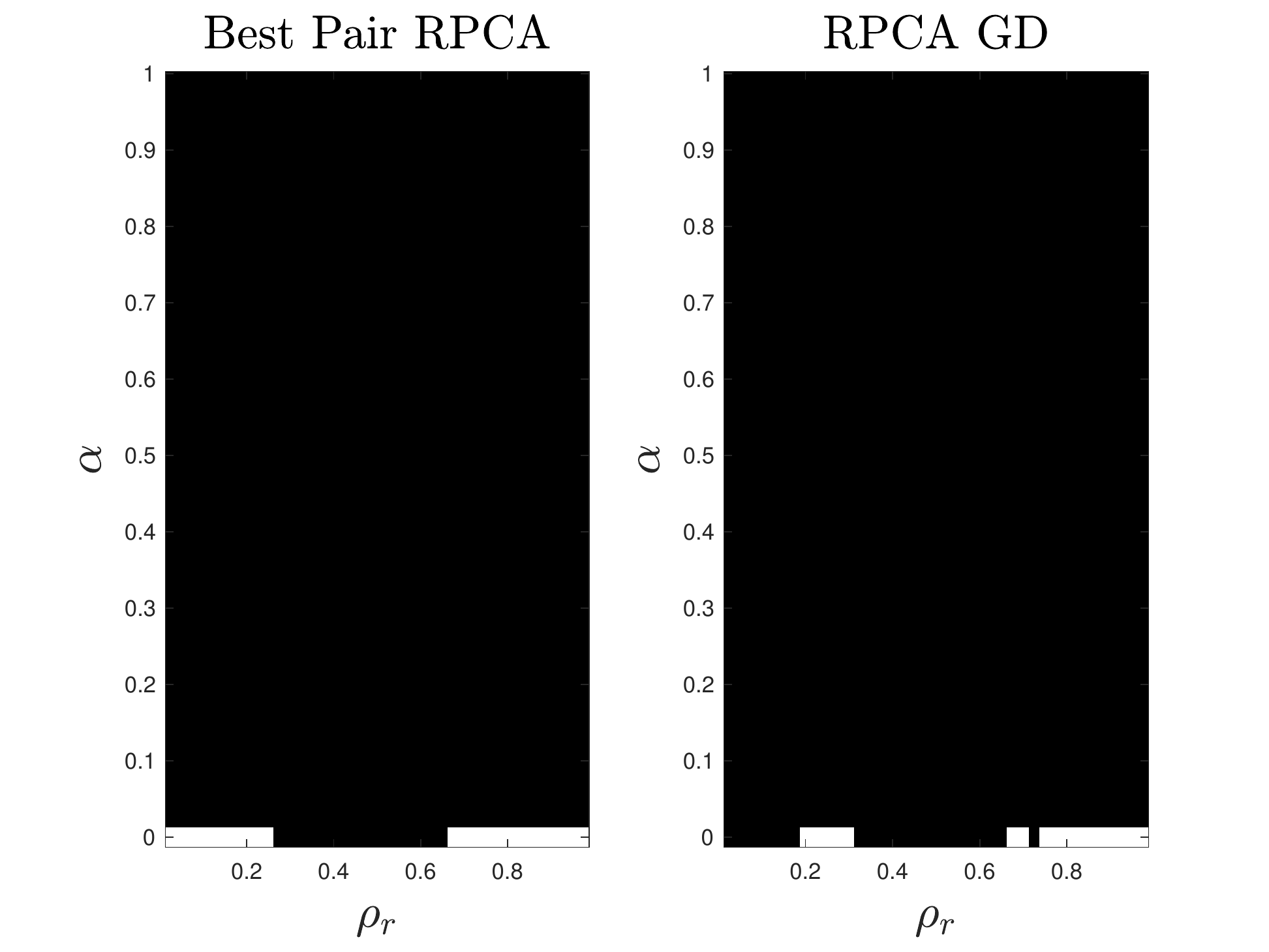}
  \end{minipage} 
   \begin{minipage}{0.32\textwidth}
    \includegraphics[width = \textwidth]{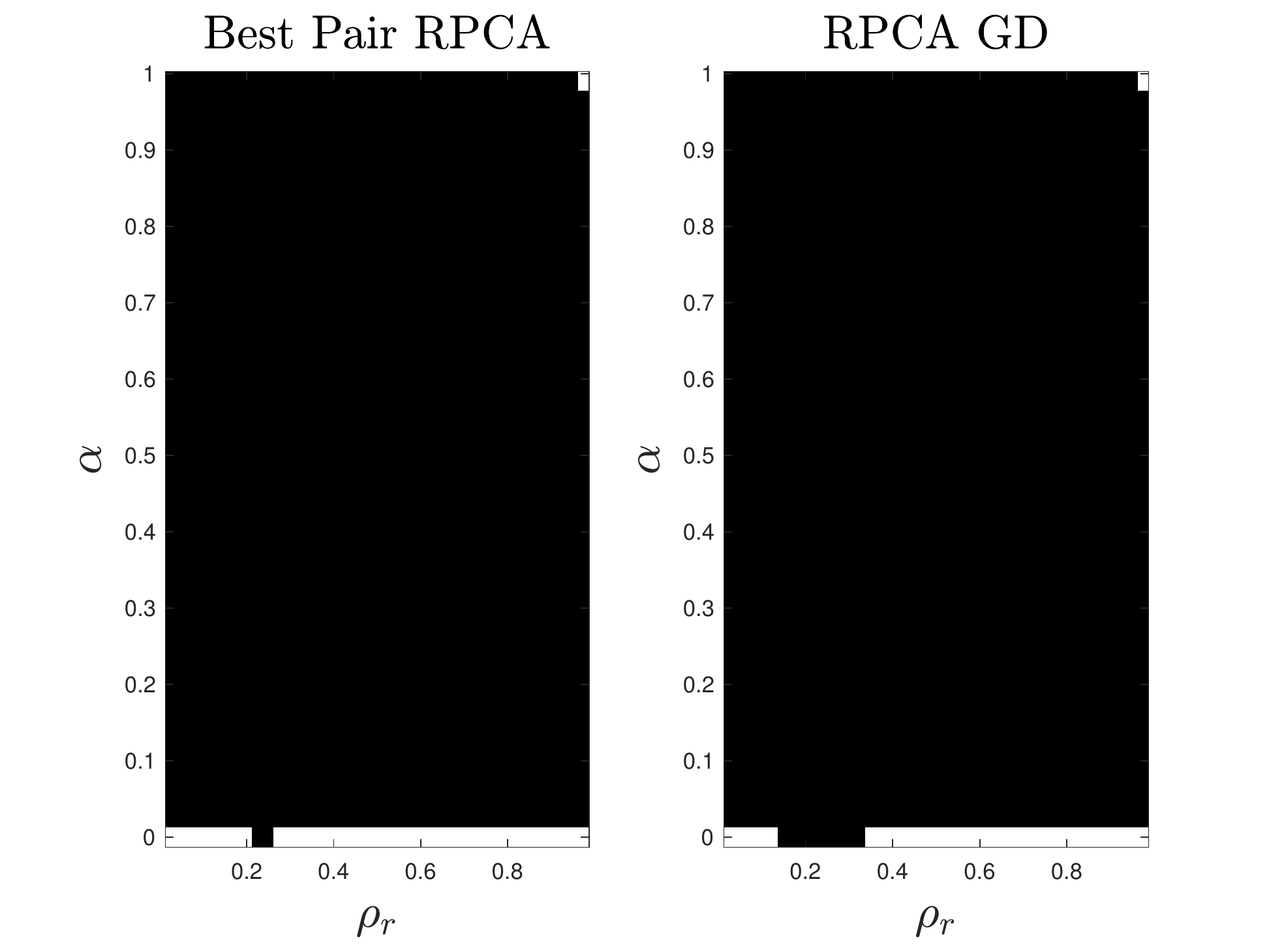}
  \end{minipage} 
\caption{\small{Phase transition diagram to compare between RPCA GD and best-pair RPCA with respect to rank and error sparsity. We set $\rho_r={\rm rank}(L)/m$ and $\alpha$ is the sparsity measure. We have $(\rho_r,\alpha)\in (0.025,1]\times(0,1)$ with $r=5:5:200$ and $\alpha = {\tt linspace}(0,0.99,40)$.}}
    \label{syntheticdata3}
\end{figure}

\begin{figure}[h]
    \centering
    \begin{minipage}{0.49\textwidth}
    \includegraphics[width=\textwidth]{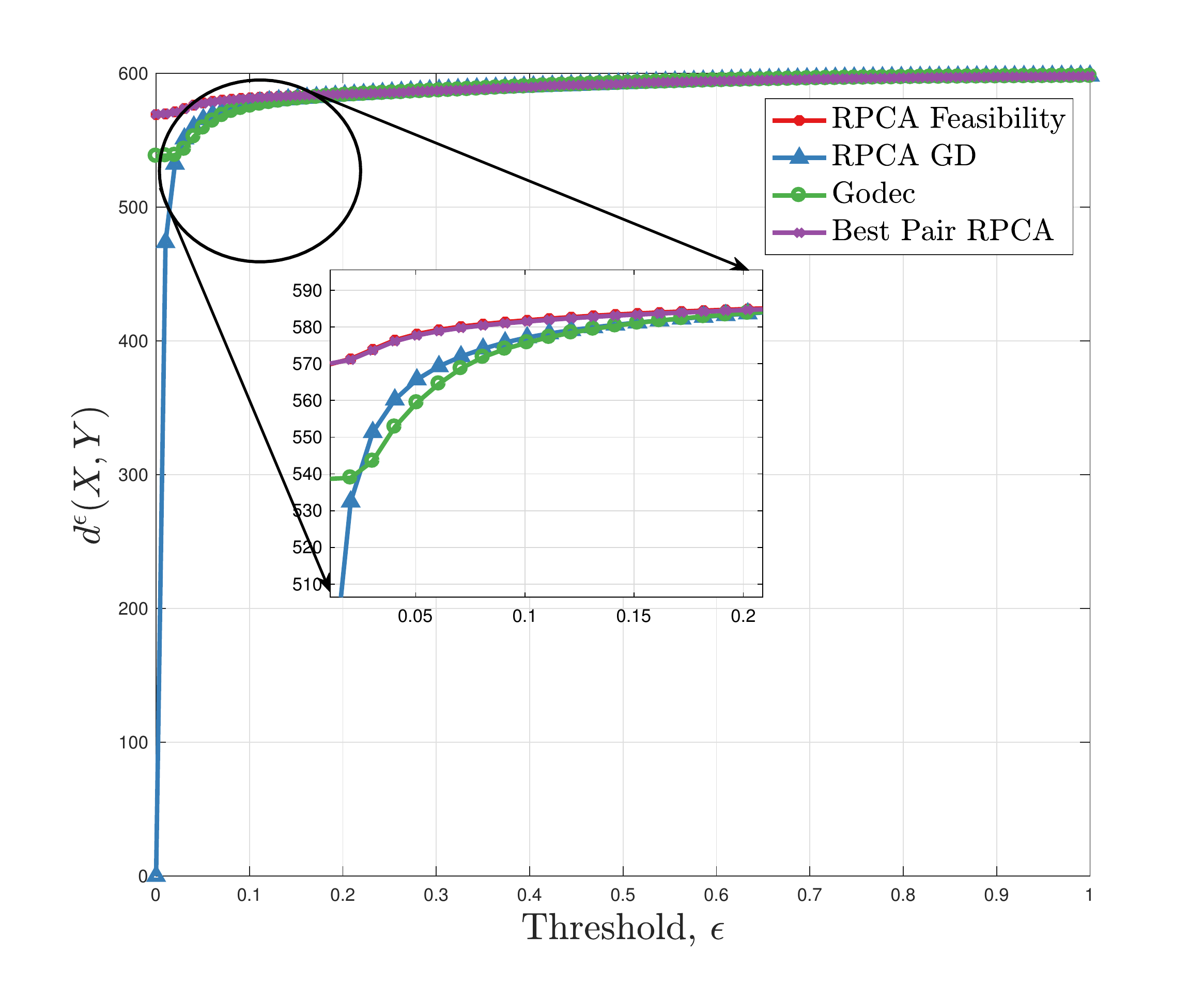}
      \end{minipage}
    \begin{minipage}{0.5\textwidth}
    \includegraphics[width = \textwidth, height = 2.2in]{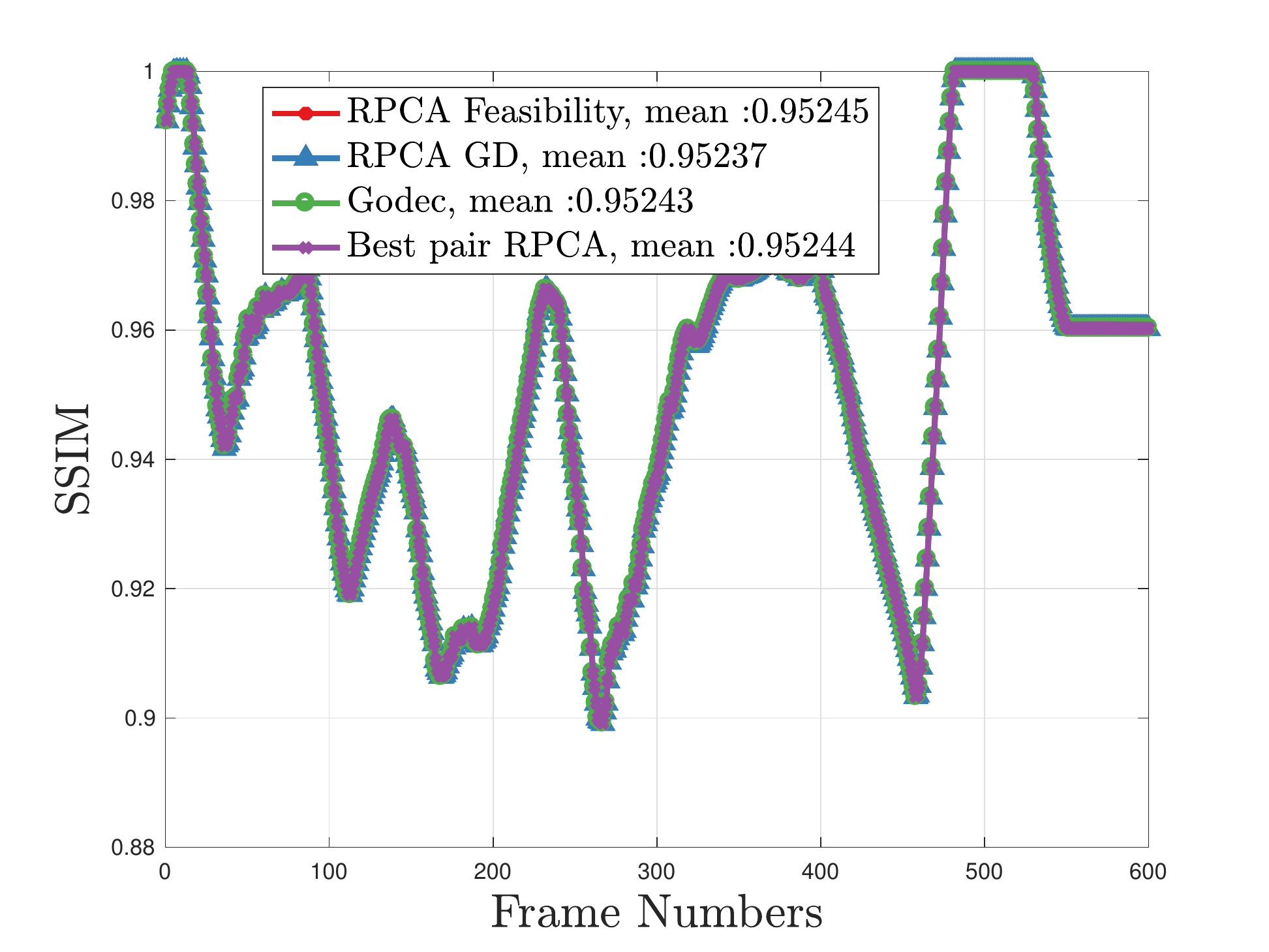}
  \end{minipage} 
\caption{{Quantitative comparison between different algorithms on Stuttgart {\tt Basic} sequence. We compare the recovered foreground by different methods with respect to the foreground GT available for each frame on two different metrics: $\epsilon$-proximity metric--$d_{\epsilon}(X,Y)$ as in \citep{duttahanzely} and structural similarity index measure by \citep{mssim}. In recovering the foreground objects, our best pair RPCA is as robust as the other baseline methods with respect to both metrics.}}
    \label{qunatitative_BG}
\end{figure}

\begin{figure}[h]
    \centering
    \begin{minipage}{0.32\textwidth}
    \includegraphics[width=\textwidth]{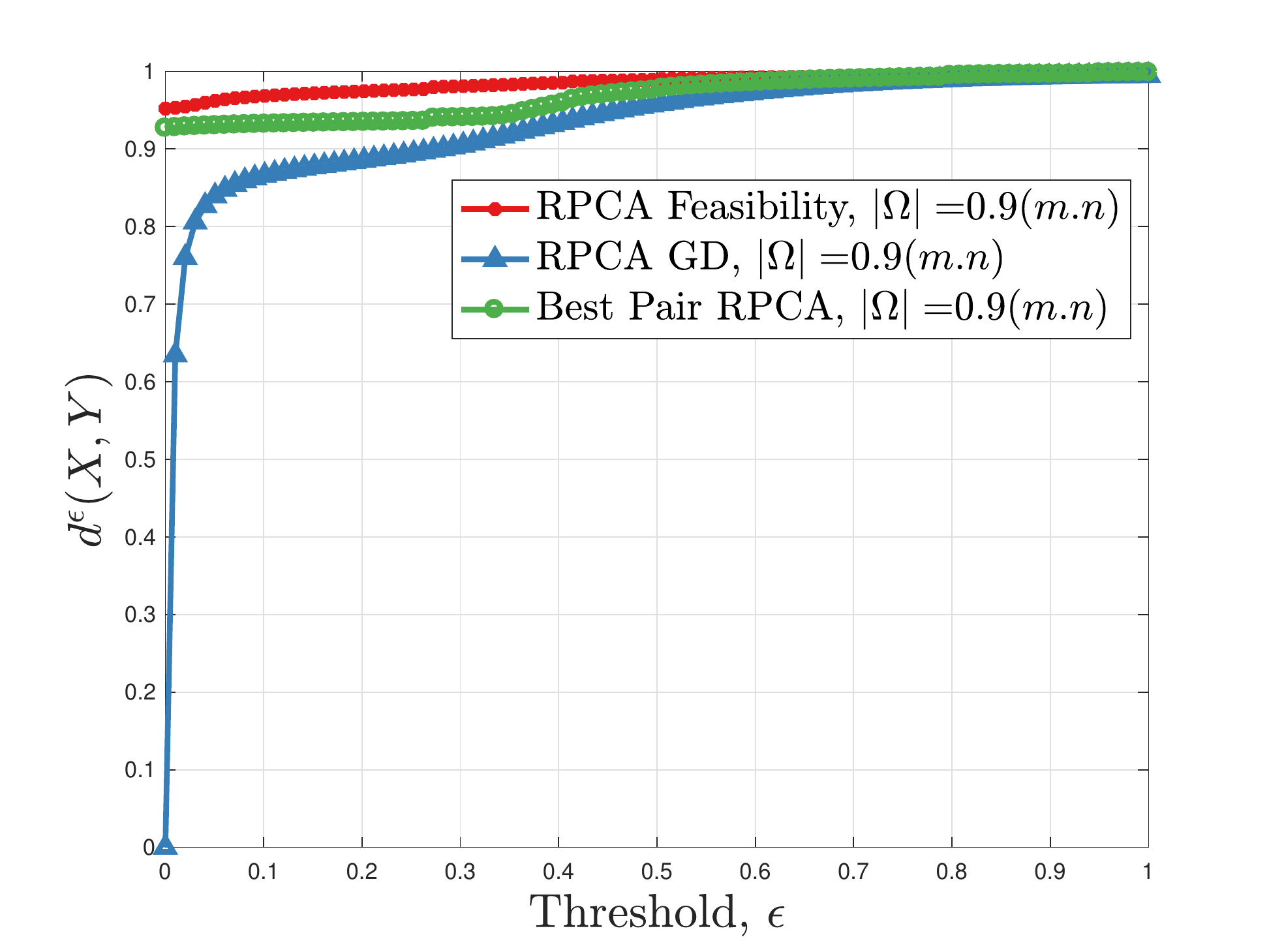}
      \end{minipage}
    \begin{minipage}{0.32\textwidth}
    \includegraphics[width = \textwidth]{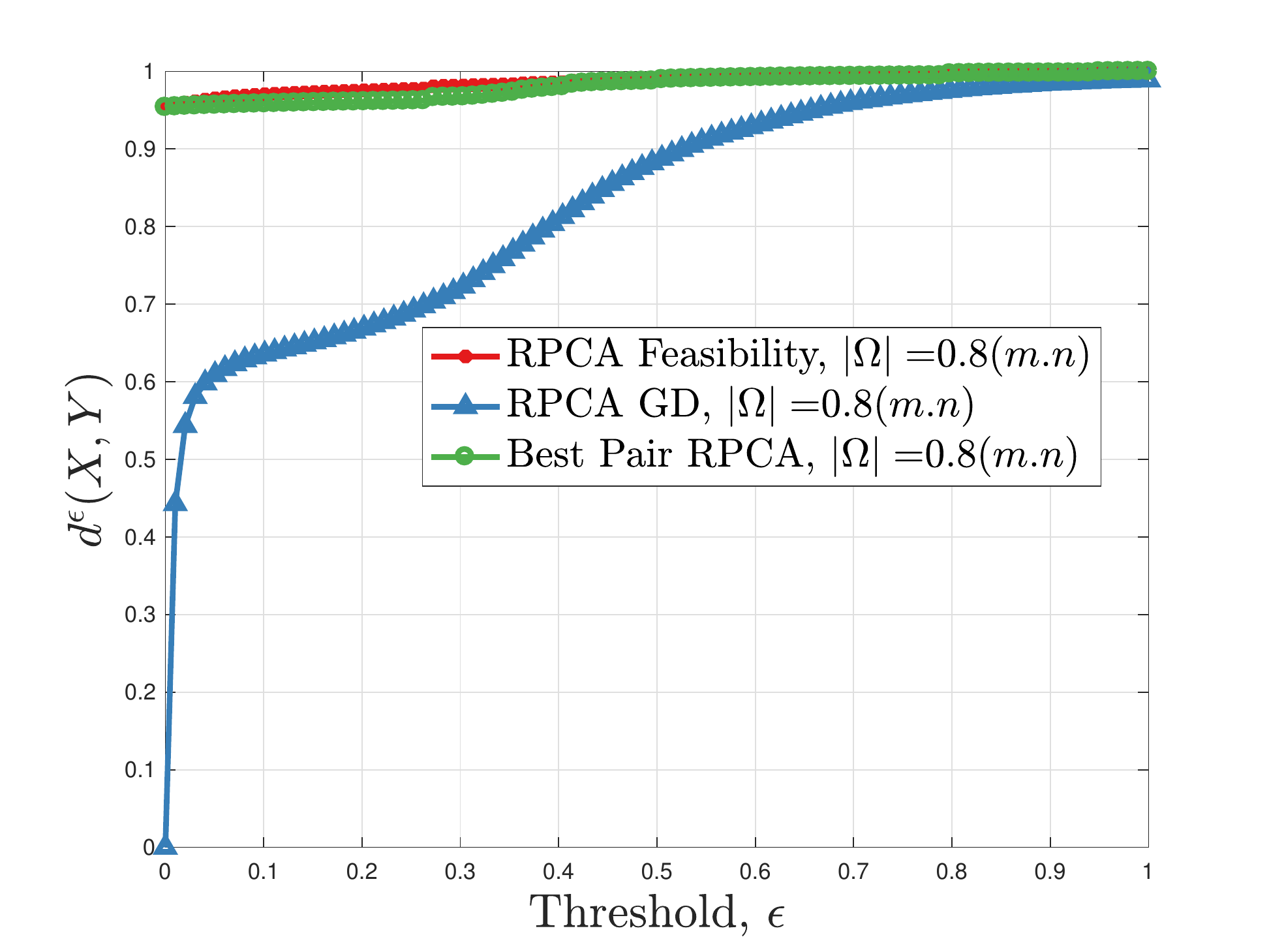}
  \end{minipage} 
   \begin{minipage}{0.32\textwidth}
    \includegraphics[width = \textwidth]{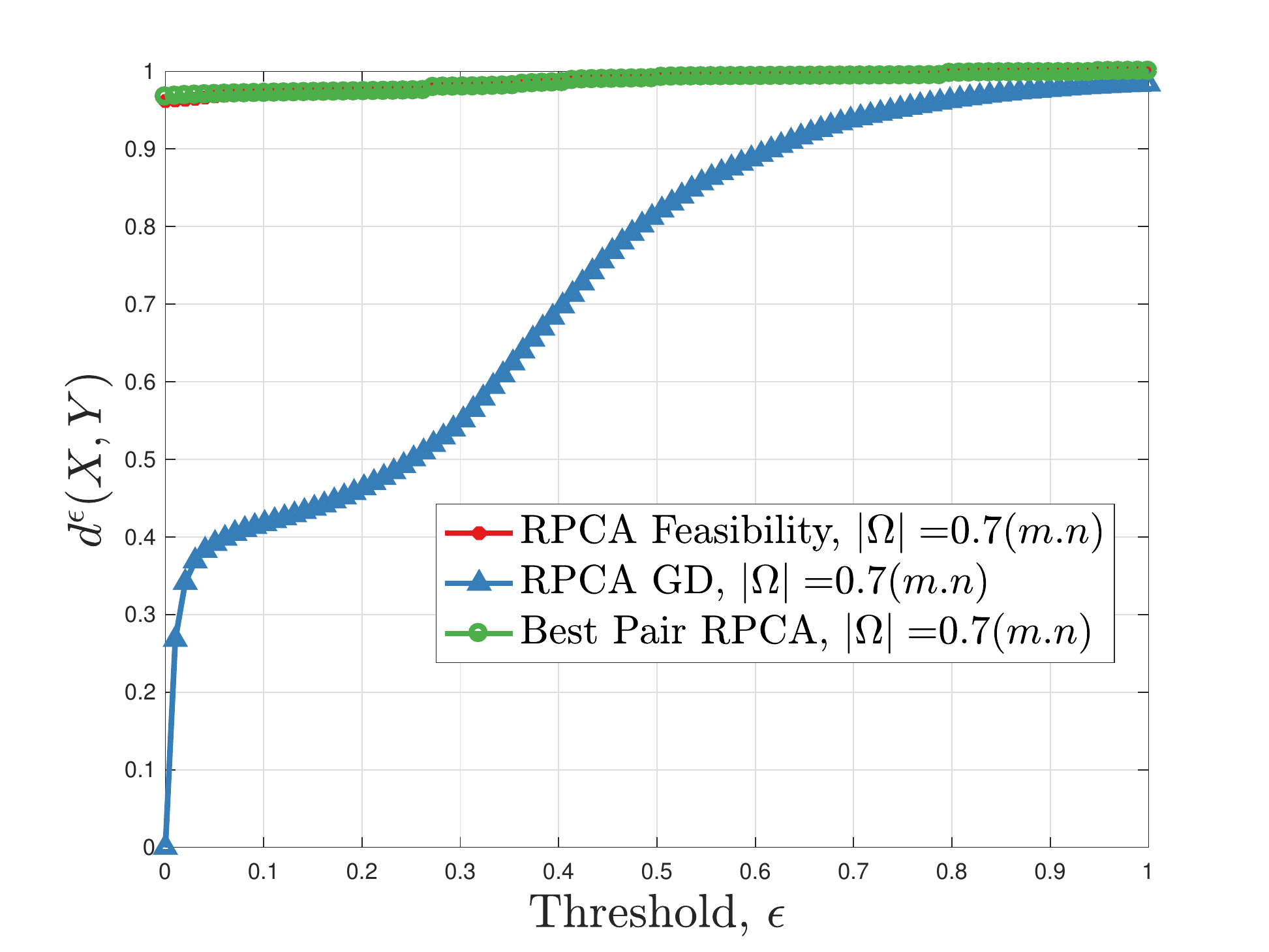}
  \end{minipage} 
  \\
  \begin{minipage}{0.32\textwidth}
    \includegraphics[width=\textwidth]{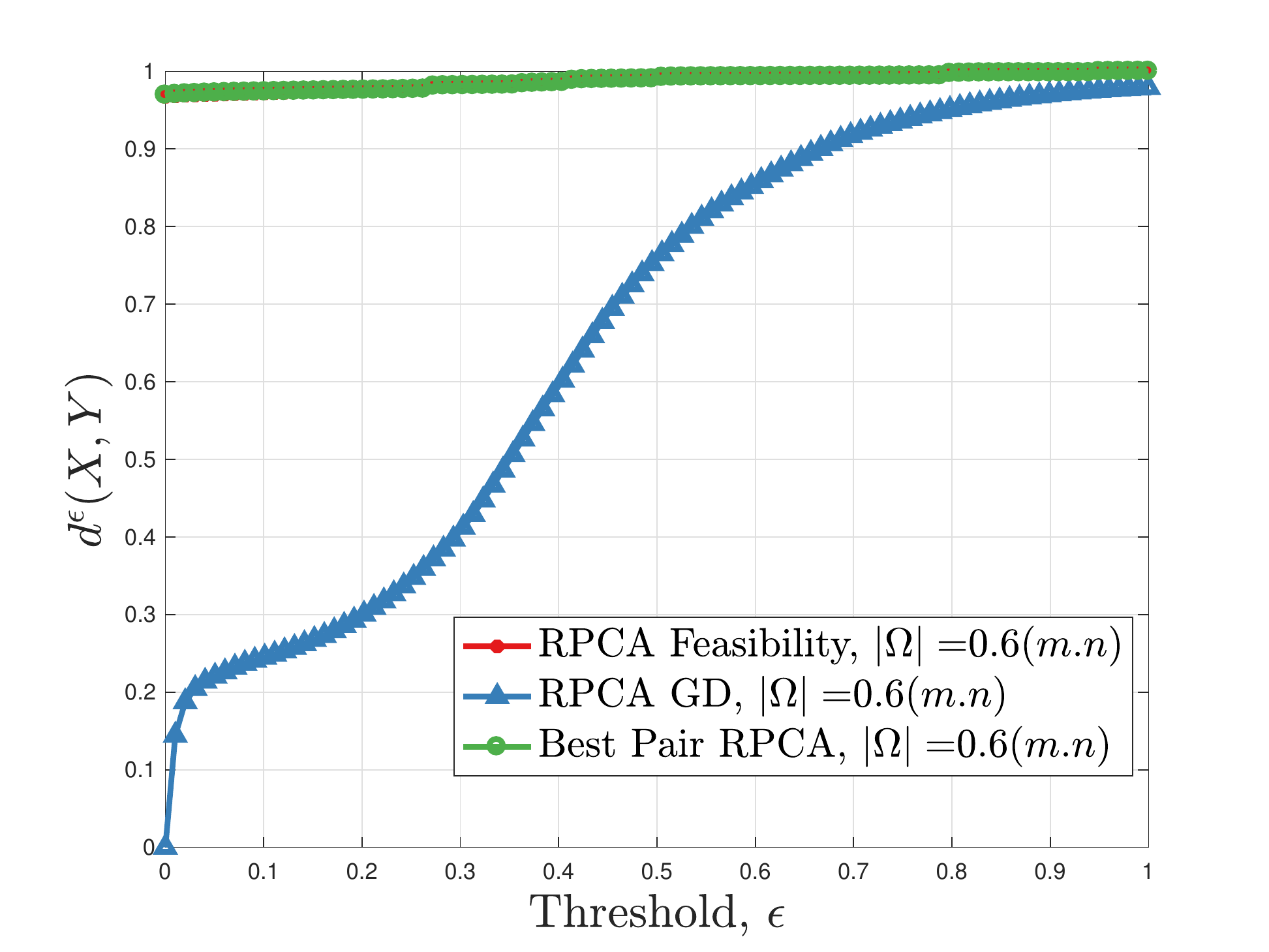}
      \end{minipage}
    \begin{minipage}{0.32\textwidth}
    \includegraphics[width = \textwidth]{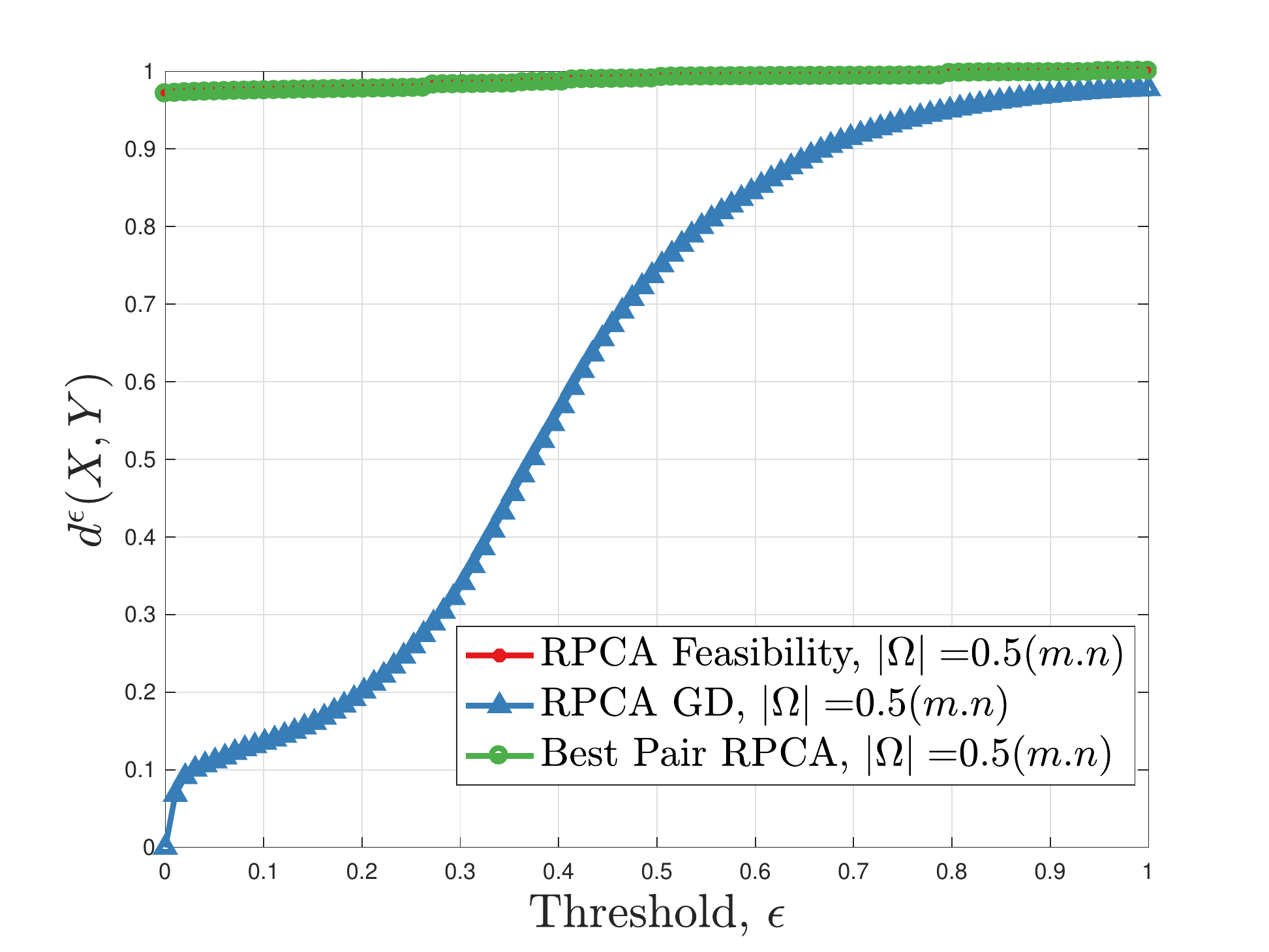}
  \end{minipage} 
   \begin{minipage}{0.32\textwidth}
    \includegraphics[width = \textwidth]{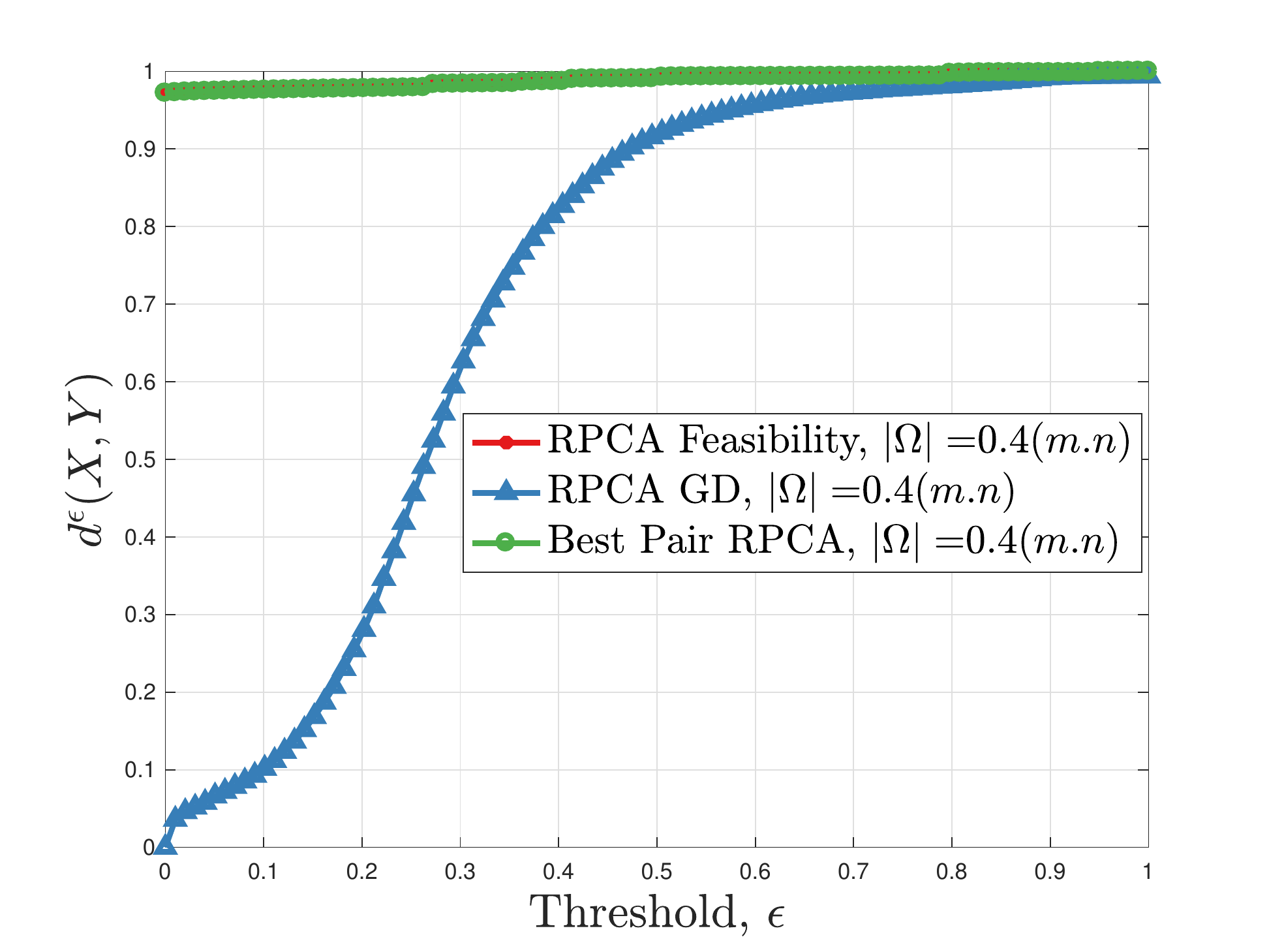}
  \end{minipage} 
\caption{\small{Quantitative comparison of foreground recovered by best pair RPCA, RPCA GD, and RPCA NCFF Stuttgart {\tt Basic} sequence, frame size $144\times176$ with observable entries: (a) $|\Omega|=0.9(m.n)$, (b) $|\Omega|=0.8(m.n),$ (c) $|\Omega|=0.7(m.n)$, (d) $|\Omega|=0.6(m.n)$, (e) $|\Omega|=0.5(m.n)$, and (f) $|\Omega|=0.4(m.n)$. The performance of RPCA GD drops significantly as $|\Omega|$ decreases. In contrast, the performance of our best pair RPCA and RPCA NCF stay stable irrespective of the size of $|\Omega|$.}}
    \label{qunatitative_BG_1}
\end{figure}

\subsubsection{Inlier detection}\label{sec:inlier detection}

Historically, PCA and RPCA are used in detecting the inliers and the outliers from a composite dataset. We infused  400 random, grayscale, downsampled~($20\times20$ pixels) natural images from the BACKGROUND/Google folder of the Caltech101 database \citep{caltech101} with the {\tt Yale Extended Face Database} to construct the data set. The inliers are the grayscale images of faces~(of the same resolution) under different illuminations while the 400 random natural images serve as outliers. The goal is to consider a low-dimensional model and to project the inliers to a 9-dimensional linear subspace where the images of the same face lie.~Goes et al.\  in \citep{rspca} designed seven algorithms to explicitly find a low-rank subspace. To this end, Goes et al.\ used the classical SGD, an incremental approach, and mirror descent algorithms to find the 9-dimensional subspace. However, we split the dataset, $A$, into a 9-dimensional low-rank subspace $L$ and expect the outliers to be in the sparse set, $S$. Once we find $L$, we find the basis of $L$ via orthogonalization and project the faces on it.  In Figure \ref{inlier_detect}, we show the qualitative results of our experiments\footnote{The codes and datasets for experiments in Section \ref{sec:inlier detection} are obtained from {https://github.com/jwgoes/RSPCA}}.

As proposed in \citep{rspca}, we use the normalized error term $\|P_L-P_{L^*}\|_F/{3\sqrt{2}}$, where $L$ is subspace fitted by the PCA to the set of inliers and $L^*$ be the subspace fitted by different algorithms. Note that, the metric is expected to lie between 0 and 1 where the smaller is the better. 
We refer to Table \ref{inlier_data} for our quantitative results.

\begin{figure}[h]
    \centering
    \includegraphics[width = \textwidth]{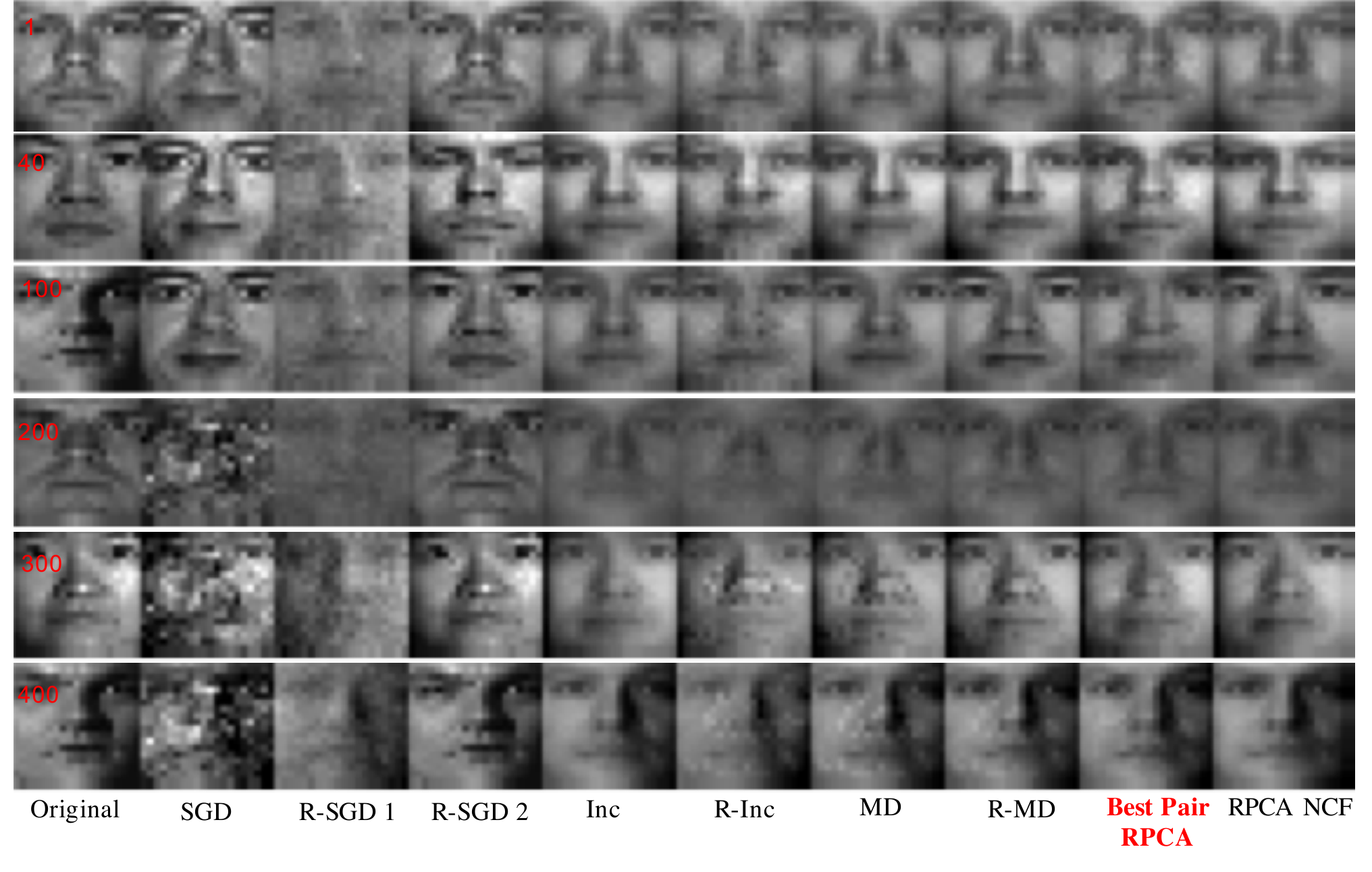}
    \caption{\small{Inliers and outliers detection. We project the face images (inliers) to 9 dimensional subspaces found by different methods.}}
    \label{inlier_detect}
\end{figure}

\begin{table}[h]
\begin{center}
\begin{tabular}{|l|c|c|c|c|c|c|c|c|c|c|c|}
\hline
\;\;Metric & SGD & R-SGD1 & R-SGD2 & Inc & R-Inc &MD & R-MD & RPCA-F& Best pair & SVT\\
\hline
$\frac{\|P_L-P_{L^*}\|_F}{3\sqrt{2}}$&0.7   & 0.86   & 4.66   & 0.77  &  0.72 &   0.67  &  0.67 &   0.78 & 0.76 & 0.79\\
\hline
\end{tabular}
\end{center}
\caption{\small{Quantitative performance of different algorithms in inlier detection experiment. Except R-SGD2 all methods are competitive.}}\label{inlier_data}
\end{table}

\section{Proof of the global convergence}
\label{sec:proof-global}

For convenience, define $\Dt{k} \eqdef \norm{\Yk - \Ykm}$.

\begin{lemma}\label{lem:subgradient}
For the update of $\Ykp$ in \eqref{eq:apg}, given any $k\in\bbN$, define
\beqn%\label{eq:subg}
\begin{aligned}
\Gkp
\eqdef \sfrac{1}{\gamma}\pa{\Zak-\Ykp} - \nabla \calF(\Zbk) + \nabla \calF(\Ykp)  .
\end{aligned}
\eeqn
Then, we have $\Gkp \in \partial \Phi(\Ykp)$, and 
\beq\label{eq:subg-1}
\norm{\Gkp}
\leq \bPa{\sfrac{1}{\gamma} + L}\Dt{k+1} + \pa{\sfrac{\ak}{\gamma}+\bk L}\Dt{k}  .
\eeq
\end{lemma}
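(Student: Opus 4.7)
The plan is to recognize that the update rule~\eqref{eq:apg} defining $\Ykp$ is precisely a proximal-gradient step on the composite objective $\Phi = \calR + \calF$, with gradient evaluated at $\Zbk$ and proximal center $\Zak$. Concretely, $\Ykp$ minimizes $Y \mapsto \iota_{\calY}(Y) + \frac{1}{2\gamma}\|Y - (\Zak - \gamma \nabla \calF(\Zbk))\|^2$, since in our setting $\calR = \iota_{\calY}$ and $\nabla \calF = \Id - \proj_{\calX}$. This identification is the key observation.

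From the optimality (inclusion) condition for this proximal step, we have
\begin{equation*}
0 \in \gamma\, \partial \calR(\Ykp) + \Ykp - \Zak + \gamma \nabla \calF(\Zbk),
\end{equation*}
equivalently $\tfrac{1}{\gamma}(\Zak - \Ykp) - \nabla \calF(\Zbk) \in \partial \calR(\Ykp)$. Adding $\nabla \calF(\Ykp)$ to both sides and using the fact that $\calF$ is smooth (so $\partial \Phi(\Ykp) = \partial \calR(\Ykp) + \nabla \calF(\Ykp)$ by the sum rule for limiting subdifferentials) yields exactly $\Gkp \in \partial \Phi(\Ykp)$.

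To obtain the norm bound~\eqref{eq:subg-1}, I apply the triangle inequality to the definition of $\Gkp$, splitting into the proximal term and the gradient-difference term:
\begin{equation*}
\norm{\Gkp} \leq \tfrac{1}{\gamma}\norm{\Zak - \Ykp} + \norm{\nabla \calF(\Ykp) - \nabla \calF(\Zbk)}.
\end{equation*}
The $L$-Lipschitz continuity of $\nabla \calF$ (assumption \iref{item:A-F}) bounds the second term by $L\norm{\Ykp - \Zbk}$. Then I substitute the inertial definitions $\Zak = \Yk + \ak(\Yk - \Ykm)$ and $\Zbk = \Yk + \bk(\Yk - \Ykm)$ and use the triangle inequality to get
\begin{equation*}
\norm{\Zak - \Ykp} \leq \Dt{k+1} + \ak \Dt{k}, \qquad \norm{\Zbk - \Ykp} \leq \Dt{k+1} + \bk \Dt{k}.
\end{equation*}
Collecting the coefficients of $\Dt{k+1}$ and $\Dt{k}$ produces exactly~\eqref{eq:subg-1}.

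This is entirely routine; the only point that needs any care is invoking the subdifferential sum rule in the non-convex setting (to write $\partial \Phi = \partial \calR + \nabla \calF$ when $\calF$ is smooth), which holds by \citep[Exercise~8.8]{rockafellar1998variational}. No obstacle is anticipated beyond bookkeeping.
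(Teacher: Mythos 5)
Your proposal is correct and follows essentially the same route as the paper's own proof: the optimality condition of the proximal step gives $\sfrac{1}{\gamma}(\Zak-\Ykp)-\nabla\calF(\Zbk)\in\partial\calR(\Ykp)$, adding $\nabla\calF(\Ykp)$ yields the inclusion, and the bound follows from the triangle inequality, the $L$-Lipschitz continuity of $\nabla\calF$, and the inertial definitions of $\Zak,\Zbk$. Your explicit invocation of the subdifferential sum rule for $\partial\Phi=\partial\calR+\nabla\calF$ is a welcome extra degree of care that the paper leaves implicit.
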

\begin{proof}
From the definition of proximity operator and the update of $\Ykp$ \eqref{eq:apg}, we have $\Zak - \gamma \nabla \calF(\Zbk)  - \Ykp \in  \gamma \partial \calR(\Ykp)$. Adding $\gamma \nabla \calF(\Ykp)$ to both sides, we obtain
\[
\Gkp = \sfrac{ \Zak - \gamma \nabla \calF(\Zbk)  - \Ykp + \gamma \nabla \calF(\Ykp) }{\gamma}
\in  \partial \Phi(\Ykp)   .
\]
Applying further the triangle inequality together with the Lipschitz continuity of $\nabla \calF$, we get
\[
\begin{aligned}
\norm{\Gkp}  
&= \norm{ \tfrac{1}{\gamma}\pa{\Zak-\Ykp} - \nabla \calF(\Zbk) + \nabla \calF(\Ykp) } \\
&\leq \sfrac{1}{\gamma}\norm{\Zak-\Ykp} + L \norm{\Zbk-\Ykp}  
\leq \sfrac{1}{\gamma}\pa{\Dt{k+1} +{\ak} \Dt{k} } + L \pa{\Dt{k+1} + {\bk} \Dt{k} }  ,
\end{aligned}
\]
which concludes the proof.
\end{proof}

\begin{lemma}\label{lem:descent}
For Algorithm \ref{alg:apg}, given the parameters $\gamma, \ak, \bk$, the following inequality holds:
\beq\label{eq:descent}
\Phi(\Ykp) + \ubeta \Dt{k+1}^2
\leq
\Phi(\Yk) + \oalpha \Dt{k}^2   .
\eeq
\end{lemma}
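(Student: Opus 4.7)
The plan is to derive a per-iterate descent inequality of the form
$\Phi(\Ykp) + \beta_k \Dt{k+1}^2 \leq \Phi(\Yk) + \alpha_k \Dt{k}^2$
by combining the standard descent lemma for the $L$-smooth convex function $\calF$ with the proximal optimality of $\Ykp$, and then handling the inertial cross terms via Young's inequality. The stated bound in terms of $\ubeta, \oalpha$ follows by majorizing $\beta_k \ge \ubeta$ and minorizing $\alpha_k \le \oalpha$ (either for $k$ large enough, or for constant $\ak, \bk$ in which case equality holds).

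Concretely, I would start from two building blocks. First, by \iref{item:A-F} the descent lemma at $\Yk$ yields $\calF(\Ykp) \leq \calF(\Yk) + \iprod{\nabla\calF(\Yk)}{\Ykp-\Yk} + \tfrac{L}{2}\Dt{k+1}^2$. Second, by \eqref{eq:apg}, $\Ykp$ is a (global) minimizer of $Y\mapsto \calR(Y) + \tfrac{1}{2\gamma}\norm{Y - \Zak + \gamma\nabla\calF(\Zbk)}^2$, so comparing values at $\Ykp$ and $\Yk$ and expanding the quadratic gives
\[
\calR(\Ykp) + \tfrac{1}{2\gamma}\norm{\Ykp - \Zak}^2 \leq \calR(\Yk) + \tfrac{1}{2\gamma}\norm{\Yk - \Zak}^2 + \iprod{\nabla\calF(\Zbk)}{\Yk - \Ykp}.
\]
This inequality is valid despite $\calR$ being only lsc (not convex), since we only use optimality of a proximal minimum.

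Summing the two, writing $\Phi = \calR + \calF$, merging the two gradient terms into $\iprod{\nabla\calF(\Yk)-\nabla\calF(\Zbk)}{\Ykp-\Yk}$, using the inertial identity $\norm{\Ykp-\Zak}^2 - \norm{\Yk-\Zak}^2 = \Dt{k+1}^2 - 2\ak \iprod{\Ykp-\Yk}{\Yk-\Ykm}$, and bounding the remaining inner product by $\|\nabla\calF(\Yk)-\nabla\calF(\Zbk)\|\,\Dt{k+1} \leq L\bk\Dt{k}\Dt{k+1}$ via Lipschitzness of $\nabla\calF$, I reduce matters to
\[
\Phi(\Ykp) + \tfrac{1-\gamma L}{2\gamma}\Dt{k+1}^2 \leq \Phi(\Yk) + \tfrac{\ak}{\gamma}\iprod{\Ykp-\Yk}{\Yk-\Ykm} + L\bk \Dt{k}\Dt{k+1}.
\]
The final step is Young's inequality in two places: the symmetric bound $\tfrac{\ak}{\gamma}\iprod{\Ykp-\Yk}{\Yk-\Ykm}\le\tfrac{\ak}{2\gamma}\bpa{\Dt{k+1}^2+\Dt{k}^2}$, and the weighted bound (with parameter $\nu/\gamma$) $L\bk\Dt{k}\Dt{k+1}\leq \tfrac{\nu}{2\gamma}\Dt{k+1}^2+\tfrac{\gamma L^2 \bk^2}{2\nu}\Dt{k}^2$. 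Collecting the $\Dt{k+1}^2$ coefficients on the left and the $\Dt{k}^2$ coefficients on the right reproduces precisely the $\beta_k,\alpha_k$ of \eqref{eq:beta-alphai}.

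The only delicate point is the choice of the Young's parameter $\nu/\gamma$ in the second bound: this is exactly the tuning that aligns the $\nu$-dependent pieces of $\beta_k$ and $\alpha_k$, so that the standing assumption $\delta = \ubeta - \oalpha > 0$ in \eqref{eq:ineq-parameters} yields a non-trivial descent. Once that choice is fixed, everything else is routine algebraic bookkeeping.
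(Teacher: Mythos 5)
Your proposal follows essentially the same route as the paper's proof: compare the proximal objective values at $\Ykp$ and $\Yk$ via optimality of the proximal step, apply the descent lemma to $\calF$, and absorb the inertial and gradient-mismatch cross terms with Young's inequality, which recovers the $\beta_k,\alpha_k$ of \eqref{eq:beta-alphai} (up to a harmless factor-of-$\gamma$ rescaling of the free parameter $\nu$, an ambiguity already present in the paper's own bookkeeping). Your closing caveat that $\beta_k\ge\ubeta$ and $\alpha_k\le\oalpha$ hold only for $k$ large enough (or for constant $\ak,\bk$) is, if anything, more careful than the paper's final step.
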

\begin{proof}
% Consider $\nu > 0$ and for $k \geq 1$, due to
Define the function
\beqn%\label{eq:calL}
\calLk(Y)
\eqdef \gamma \calR(Y) + \sfrac{1}{2}\norm{Y-\Zak}^2 + \gamma \iprod{Y}{\nabla \calF(\Zbk)}  .
\eeqn
It can be shown that the update of $\Ykp$ in \eqref{eq:apg} is equivalent to 
\beq\label{eq:ifb-L}
\Ykp \in
\argmin_{Y\in\bbR^{n}} \calLk(Y)  ,
\eeq
which means that $\calLk(\Ykp) \leq \calLk(\Yk)$, which means
\[
\calR(\Ykp) + \sfrac{1}{2\gamma}\norm{\Ykp-\Zak}^2 + \iprod{\Ykp}{\nabla \calF(\Zbk)}  
\leq
\calR(\Yk) + \sfrac{1}{2\gamma}\norm{\Yk-\Zak}^2 + \iprod{\Yk}{\nabla \calF(\Zbk)}.
\]
Therefore, we get
\beq\label{eq:Rxkp-leq-Rxk}
\begin{aligned}
\calR(\Yk)
&\geq \calR(\Ykp) + \sfrac{1}{2\gamma}\norm{\Ykp-\Zak}^2 + \iprod{\Ykp - \Yk}{\nabla \calF(\Zbk)} - \sfrac{1}{2\gamma}\norm{\Yk-\Zak}^2 \\
&= \calR(\Ykp) + \sfrac{1}{2\gamma}\norm{\Ykp - \Yk + \Yk-\Zak}^2 + \iprod{\Ykp - \Yk}{\nabla \calF(\Zbk)} - \sfrac{1}{2\gamma}\norm{\Yk-\Zak}^2 \\
&= \calR(\Ykp) + \sfrac{1}{2\gamma}\Dt{k+1}^2 + \iprod{\Ykp - \Yk}{\nabla \calF(\Zbk)} - \sfrac{\ak}{\gamma} \iprod{\Yk-\Ykp}{\Yk - \Ykm} \\
&= \calR(\Ykp) +  \iprod{\Ykp-\Yk}{\nabla \calF(\Yk)}  + \sfrac{1}{2\gamma}\Dt{k+1}^2  \\&\qquad - \sfrac{\ak}{\gamma} \iprod{\Yk-\Ykp}{\Yk - \Ykm}  + \iprod{\Ykp-\Yk}{\nabla \calF(\Zbk) - \nabla \calF(\Yk)}  .
\end{aligned}
\eeq
Since $\nabla \calF$ is $L$-Lipschitz, then
\[
\iprod{\nabla \calF(\Yk)}{\Ykp-\Yk}
\geq \calF(\Ykp) - \calF(\Yk) - \sfrac{L}{2}\Dt{k+1}^2  .
\]
For the inner product $\iprod{\Yk-\Ykp}{\Yk - \Ykm}$, applying the Pythagorean relation $2\iprod{c_1-c_2}{c_1-c_3} = \norm{c_1-c_2}^2 + \norm{c_1-c_3}^2 - \norm{c_2-c_3}^2$, we get % (\tcr{or should Young's inequality should be applied here?})
\beq\label{eq:young-1}
\begin{aligned}
\iprod{\Yk-\Ykp}{\Yk - \Ykm}
&= \sfrac{1}{2} \Pa{ \norm{\Yk-\Ykp}^2 + \norm{\Yk-\Ykm}^2 - \norm{\Ykp-\Ykm}^2 }  \\
&\leq \sfrac{1}{2} \Pa{ \norm{\Yk-\Ykp}^2 + \norm{\Yk-\Ykm}^2  }  .
\end{aligned}
\eeq
Using further Young's inequality with $\nu > 0$ we obtain
\beq\label{eq:young-2}
\begin{aligned}
\iprod{\Ykp-\Yk}{\nabla \calF(\Zbk) - \nabla \calF(\Yk)} 
&\geq - \bPa{ \sfrac{\nu}{2}\Dt{k+1}^2 + \sfrac{1}{2\nu}\norm{\nabla \calF(\Zbk) - \nabla \calF(\Yk)}^2 }   \\
&\geq - \bPa{ \sfrac{\nu}{2}\Dt{k+1}^2 + \sfrac{\bk^2 L^2}{2\nu} \Dt{k}^2 }  .
\end{aligned}
\eeq
Combining the above $3$ inequalities with \eqref{eq:Rxkp-leq-Rxk} yields
\beq\label{eq:Rxkp-leq-Rxk-}
\begin{aligned}
\calR(\Yk)
&\geq \calR(\Ykp) +  \calF(\Ykp) - \calF(\Yk) - \sfrac{L}{2}\Dt{k+1}^2  + \sfrac{1}{2\gamma}\Dt{k+1}^2  \\&\qquad - \sfrac{\ak}{2\gamma} \norm{\Yk-\Ykp}^2 - \sfrac{\ak}{2\gamma} \norm{\Yk-\Ykm}^2  -  \sfrac{\nu}{2}\Dt{k+1}^2 - \sfrac{\bk^2 L^2}{2\nu} \Dt{k}^2  ,
\end{aligned}
\eeq
which leads to
\[
\Phi(\Ykp) + \sfrac{1 - \gamma L - \ak - \nu}{2\gamma}\Dt{k+1}^2 
\leq
\Phi(\Yk) + \sfrac{\gamma\bk^2 L^2 + \nu\ak}{2\nu\gamma} \Dt{k}^2 .
\]
Owing to the definition of $\ubeta$ and $\oalpha$ we conclude the proof.
\end{proof}

Define $\calH$ the product space $\calH \eqdef \bbR^{n} \times \bbR^{n}$ and $\Zk = (\Yk, \Ykm) \in \calH$. 
Then given $\Zk$, define the function
\beqn%\label{eq:calL}
\Psi(\Zk) 
\eqdef \Phi(\Yk) +  \oalpha \Dt{k}^2  ,
\eeqn
which is is a KL function if $\Phi$ is.
% If $\Phi$ is a KL function, then so is $\Psi$.
% 
Denote $\calC_{\Yk}, \calC_{\Zk}$ the set of cluster points of sequences $\seq{\Yk}$ and $\seq{\Zk}$ respectively, and $\crit(\Psi) = \ba{Z = (Y, Y) \in \calH: Y \in \crit(\Phi) }$.

\begin{lemma}\label{lem:Dtk^2-in-l1}
For Algorithm \ref{alg:apg}, choose $\nu, \gamma, \ak, \bk$ such that \eqref{eq:ineq-parameters} holds. 
If $\Phi$ is bounded from below, then
\begin{enumerate}[label = {\rm (\roman{*})}]
\item
$\sum_{\kinN} \Dtk^2 < \pinf$;
\item
The sequence $\Psi(\Zk) $ is monotonically decreasing and convergent;
\item
The sequence $\Phi(\Yk)$ is convergent.
\end{enumerate}
\end{lemma}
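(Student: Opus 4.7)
The plan is to use Lemma~\ref{lem:descent} as a Lyapunov-style descent on the extended-state function $\Psi$, and then extract summability of $\Delta_k^2$ by telescoping. Starting from the conclusion of Lemma~\ref{lem:descent},
\[
\Phi(\Ykp) + \ubeta\, \Dt{k+1}^2 \leq \Phi(\Yk) + \oalpha\, \Dt{k}^2,
\]
I would add $\oalpha\,\Dt{k+1}^2$ to both sides so that the right-hand side becomes exactly $\Psi(\Zk)$, while the left-hand side reorganises as
\[
\Phi(\Ykp) + \oalpha\,\Dt{k+1}^2 + (\ubeta - \oalpha)\,\Dt{k+1}^2 \;=\; \Psi(Z_{k+1}) + \delta\,\Dt{k+1}^2 .
\]
By the parameter condition~\iref{eq:ineq-parameters}, $\delta > 0$, so the resulting inequality
\[
\Psi(Z_{k+1}) + \delta\,\Dt{k+1}^2 \leq \Psi(\Zk)
\]
is a strict one-step decrease, which immediately yields the monotone decreasing part of~(ii).

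Next, I would show $\Psi(\Zk)$ is bounded from below. Since $\oalpha \geq 0$ and $\Phi$ is bounded from below by assumption \iref{item:A-R}, we have $\Psi(\Zk) = \Phi(\Yk) + \oalpha\,\Dt{k}^2 \geq \inf \Phi > -\infty$. A monotone decreasing sequence bounded from below converges, completing~(ii). For~(i), I would telescope the Lyapunov inequality from $0$ to $K-1$:
\[
\delta \sum_{k=1}^{K} \Dt{k}^2 \;\leq\; \Psi(Z_{0}) - \Psi(Z_K) \;\leq\; \Psi(Z_{0}) - \inf_k \Psi(\Zk) \;<\; +\infty,
\]
and let $K \to \infty$ to conclude $\sum_{\kinN} \Dt{k}^2 < +\infty$. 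Finally,~(iii) is a direct consequence of~(i) and~(ii): since $\Dt{k}^2 \to 0$, the correction term $\oalpha\,\Dt{k}^2 = \Psi(\Zk) - \Phi(\Yk)$ vanishes, so the convergence of $\Psi(\Zk)$ transfers to $\Phi(\Yk)$.

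The only technical subtlety I foresee is that $\ubeta$ and $\oalpha$ are defined as a $\liminf$ and a $\limsup$, so the descent inequality from Lemma~\ref{lem:descent} used with these constants is only guaranteed for $k$ past some finite index $K_0$. The remedy is routine: apply the Lyapunov telescoping from $K_0$ onwards, and absorb the finite prefix $\sum_{k \leq K_0} \Dt{k}^2$ into the bound for~(i). This does not affect convergence of $\Psi(\Zk)$ nor $\Phi(\Yk)$, so the argument goes through without essential change.
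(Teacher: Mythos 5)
Your proof is correct and follows essentially the same route as the paper: rearrange the descent inequality of Lemma~\ref{lem:descent} into the Lyapunov form $\Psi(\Zkp) + \delta\,\Dt{k+1}^2 \leq \Psi(\Zk)$, telescope to obtain $\sum_k \Dt{k}^2 < \pinf$, and transfer the convergence of $\Psi(\Zk)$ to $\Phi(\Yk)$ via $\Dt{k}\to 0$. Your closing caveat that $\ubeta$ and $\oalpha$ are a $\liminf$/$\limsup$, so the descent inequality is only guaranteed beyond some finite index, is a point the paper passes over silently, and your remedy (telescope from $K_0$ onward and absorb the finite prefix) is the correct way to handle it.
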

\begin{proof}
Define $\delta = \ubeta - \oalpha   > 0$, from Lemma \ref{lem:descent}, we have
\beqn
\delta \Dt{k+1}^2 
\leq \Pa{\Phi(\Yk) - \Phi(\Ykp)} + \oalpha \pa{\Dt{k}^2 - \Dt{k+1}^2}   .
\eeqn
Let $Y_{-1} = Y_{0}$ and the above inequality over $k$:
\[
\begin{aligned}
\delta \msum_{\kinN} \Dt{k+1}^2
&\leq  \msum_{\kinN}\Pa{\Phi(\Yk) - \Phi(\Ykp)}  +  \msum_{\kinN} \oalpha \pa{\Dt{k}^2 - \Dt{k+1}^2}  \\
&\leq  \Phi(Y_{0})  +  \oalpha \msum_{\kinN} \pa{\Dt{k}^2 - \Dt{k+1}^2}   
= \Phi(Y_{0})  +  \oalpha \Dt{0}^2
= \Phi(Y_{0})  ,
\end{aligned}
\]
which means, as $\Phi(Y_{0})$ is bounded,
\[
\msum_{\kinN} \Dt{k+1}^2 
\leq \sfrac{\Phi(Y_{0})}{\delta}
< \pinf  .
\]

From Lemma \ref{lem:descent}, by pairing terms on both sides of \eqref{eq:descent}, we get
\beqn
\Psi(\Zkp) + \pa{\ubeta - \oalpha} \Dt{k+1}^2   
\leq \Psi(\Zk)  .
\eeqn
Since we assume $\ubeta - \oalpha > 0$, hence $\Psi(\Zk)$ is monotonically non-increasing. 
% The case $s=1$ is straightforward as long as $\ubeta > \oalpha_{0}$.
The convergence of $\Phi(\Yk)$ is straightforward.
\end{proof}

\begin{lemma}\label{lem:subsequence-convergence}
For Algorithm \ref{alg:apg}, choose $\nu, \gamma, \ak, \bk$ such that \eqref{eq:ineq-parameters} holds.  
If $\Phi$ is bounded from below and $\sequence{\Yk}$ is bounded, then $\Yk$ converges to a critical point of $\Phi$.
\end{lemma}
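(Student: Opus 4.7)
My plan is to first establish that every cluster point of $\sequence{\Yk}$ lies in $\crit(\Phi)$ using Lemmas \ref{lem:subgradient} and \ref{lem:Dtk^2-in-l1}, and then upgrade to full sequential convergence via the \KL property of $\Phi$ carried over to the Lyapunov function $\Psi$ on $\calH$. From Lemma \ref{lem:Dtk^2-in-l1}(i), summability of $\Dt{k}^2$ forces $\Dt{k}\to 0$, so the auxiliary iterates $\Zak$ and $\Zbk$ share all cluster points with $\Yk$, and boundedness of $\sequence{\Yk}$ makes $\calC_{\Yk}$ nonempty and compact. For any subsequential limit $Y_{k_j}\to Y^\star$, the bound \eqref{eq:subg-1} gives $G_{k_j+1}\in\partial\Phi(Y_{k_j+1})$ with $\norm{G_{k_j+1}}\to 0$ while $Y_{k_j+1}\to Y^\star$; to invoke the outer semicontinuity of the limiting subdifferential I also need $\Phi(Y_{k_j+1})\to\Phi(Y^\star)$, which I would obtain by plugging $Y=Y^\star$ into the minimizer characterization \eqref{eq:ifb-L} of $\Ykp$, combining the resulting $\limsup \calR(Y_{k_j+1})\leq\calR(Y^\star)$ with the lower semicontinuity of $\calR$ and the continuity of $\calF$. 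This yields $0\in\partial\Phi(Y^\star)$, i.e.\ $\calC_{\Yk}\subset\crit(\Phi)$.

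Upgrading to full convergence of $\sequence{\Yk}$ requires the \KL assumption of Theorem \ref{thm:convergence-apg}; it transfers from $\Phi$ to $\Psi$ because the quadratic penalty $\oalpha\norm{Y-Y'}^2$ is smooth and semi-algebraic. Moreover, $\Psi(\Zk)$ is monotone decreasing with common limit $\Phi^\star$, and the previous step shows $\calC_{\Zk}\subset\crit(\Psi)$ with $\Psi\equiv \Phi^\star$ on the compact cluster set. A standard uniformization argument then produces a single concave desingularizer $\varphi$ valid on a neighbourhood of $\calC_{\Zk}$. Pairing the resulting \KL inequality with the descent $\delta\Dt{k+1}^2 \leq \Psi(\Zk)-\Psi(\Zkp)$ (available from the proof of Lemma \ref{lem:Dtk^2-in-l1}) and the bound $\dist(0,\partial\Psi(\Zk)) \leq C(\Dt{k}+\Dt{k-1})$ derived from Lemma \ref{lem:subgradient}, and telescoping $\varphi(\Psi(\Zk)-\Phi^\star)-\varphi(\Psi(\Zkp)-\Phi^\star)$ via concavity of $\varphi$, should yield $\sum_k \Dt{k}<\infty$; the resulting Cauchy property then delivers $\Yk\to Y^\star\in\crit(\Phi)$.

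The main obstacle will be calibrating this final telescoping: the descent only controls $\Dt{k+1}^2$ while the subgradient estimate is linear in $\Dt{k}+\Dt{k-1}$, so one must combine a Young-type arithmetic--geometric inequality with the concavity of $\varphi$ sharply enough that $\Dt{k+1}$ itself (not just its square) becomes summable. A secondary subtlety is verifying the \KL inequality uniformly over the compact cluster set via the uniformization argument, which is where one typically invokes the abstract framework of \citep{liang2016multi}.
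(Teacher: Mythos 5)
Your first paragraph is essentially the paper's proof of this lemma: boundedness gives a cluster point $\Ybar$, the minimizer characterization \eqref{eq:ifb-L} evaluated against $\Ybar$ combined with lower semicontinuity of $\calR$ gives $\Phi(\Ykj)\to\Phi(\Ybar)$, and Lemma \ref{lem:subgradient} together with $\Dt{k}\to 0$ gives subgradients vanishing along the subsequence, hence $0\in\partial\Phi(\Ybar)$ by closedness of the limiting subdifferential. That is all the paper does here. Your second and third paragraphs go further and sketch the \KL/finite-length argument, which the paper does not place in this lemma but in the proof of Theorem \ref{thm:convergence-apg} (items \iref{it:rrr1}--\iref{it:rrr7} and the telescoping of $\varphi(\psi_k)$ against $\delta\Dt{k+1}^2 \leq \Psi(\Zk)-\Psi(\Zkp)$ and $\norm{W_k}\leq\sigma(\Dt{k}+\Dt{k-1})$, exactly the Young-plus-concavity calibration you describe). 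You are right to flag that the lemma's stated hypotheses (boundedness from below and boundedness of the iterates, but no \KL assumption) only yield that every cluster point is critical, not that the whole sequence converges; the paper's own proof of the lemma indeed establishes only the subsequential statement, and full convergence is recovered later once the \KL property is invoked. So your proposal is correct, covers the paper's argument, and honestly accounts for where the extra assumption enters.
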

\begin{proof}%[Proof of Lemma \ref{lem:subsequence-convergence}]
Since $\sequence{\Yk}$ is bounded, there exists a subsequence $\sequence{\Ykj}$ and cluster point $\Ybar$ such that $\Ykj \to \Ybar$ as $j \to \infty$. 
Next we show that $\Phi(\Ykj) \to \Phi(\Ybar)$ and that $\Ybar$ is a critical point of $\Phi$.

Since $\calR$ is lsc, then $\liminf_{j\to\infty} \calR(\Ykj) \geq \calR(\Ybar)$.
% \[
% \liminf_{j\to\infty} f(\Ykj) \geq f(\Ybar)  .
% \]
From \eqref{eq:ifb-L}, we have $\calL_{k_{j}-1}(\Ykj) \leq \calL_{k_{j}-1}(\Ybar)$ and thus % which is
% \[
% \begin{aligned}
% & \calR(\Ykj) + \frac{1}{2\gamma_{k_{j}-1}}\norm{\Ykj-y_{a,k_{j}-1}}^2 + \iprod{\Ykj}{\nabla \calF(y_{b,k_j-1})}   \\
% \leq~& \calR(\Ybar) + \frac{1}{2\gamma_{k_j-1}}\norm{\Ybar-y_{a,k_j-1}}^2 + \iprod{\Ybar}{\nabla \calF(y_{b,k_j-1})}  ,
% \end{aligned}
% \]
\[
\begin{aligned}
\calR(\Ybar)
&\geq \calR(\Ykj) + \sfrac{1}{2\gamma}\norm{\Ykj-U_{k_{j}-1}}^2 + \iprod{\Ykj - \Ybar}{\nabla \calF(V_{k_{j}-1})} - \sfrac{1}{2\gamma}\norm{\Ybar-U_{k_{j}-1}}^2 \\
&= \calR(\Ykj) + \sfrac{1}{2\gamma} \pa{\norm{\Ykj - \Ybar}^2 + 2\iprod{\Ykj - \Ybar }{\Ybar-U_{k_{j}-1}}} + \iprod{\Ykj - \Ybar}{\nabla \calF(V_{k_{j}-1})}.
\end{aligned}
\]
Taking the limit of the above inequality and using $\Dt{k}^2\to 0$, $\Ykj \to \Ybar$, we get
 $\limsup_{j\to\infty} \calR(\Ykj) \leq \calR(\Ybar)$. As a result, $\lim_{k\to\infty} \calR(\Ykj) = \calR(\Ybar)$.
Since $\calF$ is continuous, then $\calF(\Ykj) \to \calF(\Ybar)$, hence $\Phi(\Ykj) \to \Phi(\Ybar)$.

Furthermore, owing to Lemma \ref{lem:subgradient}, $G_{k_j} \in \partial \Phi(\Ykj)$, and (i) of Lemma \ref{lem:Dtk^2-in-l1} we have $G_{k_j} \to 0$ as $k \to \infty$.
Therefore, as $j \to \infty$, we have
\[
G_{k_j} \in \partial \Phi(\Ykj) ,~~ (\Ykj, G_{k_j}) \to (\Ybar, 0)  \qandq  \Phi(\Ykj) \to \Phi(\Ybar) .
\]
Hence $0 \in \partial \Phi(\Ybar)$, \ie $\Ybar$ is a critical point.
\end{proof}

%Now we present the proof of Theorem \ref{thm:convergence-apg}. 

% 
\begin{proof}[Proof of Theorem \ref{thm:convergence-apg}]
Putting together the above lemmas, we draw the following useful conclusions:
\begin{enumerate}[label={\rm({\bf C.\arabic{*}})}, ref = {\rm\bf C.\arabic{*}}, leftmargin=1.25cm]
\item \label{it:rrr1}
Denote $\delta = \ubeta - \oalpha$, then $\Psi(\Zkp) + \delta\Dt{k+1}^2 \leq \Psi(\Zk)$;
\item \label{it:rrr2}
Define
\[
W_{k}
\eqdef 
\begin{pmatrix}
\Gk + 2  \oalpha (\Yk-\Ykm)  \\
2 \oalpha (\Ykm-\Yk) 
\end{pmatrix} ,
\]
then we have $W_{k} \in \partial \Psi(\Zk)$. Owing to Lemma \ref{lem:subgradient}, there exists a $\sigma > 0$ such that $\norm{W_{k}} \leq \sigma (\Dt{k} + \Dt{k-1})$; 
\item \label{it:rrr3}
if $\Ykj$ is a subsequence such that $\Ykj \to \Ybar$, then $\Psi(\Zk) \to \Psi(\Zbar)$ where $\Zbar = (\Ybar, \Ybar)$.
\item \label{it:rrr4}
$\calC_{\Zk} \subseteq \crit(\Psi)$;
\item \label{it:rrr5}
$\lim_{k\to\infty} \dist\pa{\Zk, \calC_{\Zk}} = 0$;
\item \label{it:rrr6}
$\calC_{\Zk}$ is non-empty, compact and connected;
\item \label{it:rrr7}
$\Psi$ is finite and constant on $\calC_{\Zk}$.
\end{enumerate}
Next we prove the claims of Theorem \ref{thm:convergence-apg}.
\begin{enumerate}[label={\rm (\roman{*})}, leftmargin=2em]
\item Consider a critical point of $\Phi$, $\Ybar \in \crit(\Phi)$, such that $\Zbar = (\Ybar,\Ybar) \in \calC_{\Zk}$. Then owing to \iref{it:rrr3}, we have $\Psi(\Zk) \to \Psi(\Zbar)$.

{\hspace{12pt}}Suppose there exists $K$ such that $\Psi(Z_{K}) = \Psi(\Zbar)$. Then, the descent property \iref{it:rrr1} implies that $\Psi(\Zk) = \Psi(\Zbar)$ holds for all $k \geq K$. Thus, $\Zk$ is constant for $k\geq K$, hence has finite length.

{\hspace{12pt}}On the other hand, suppose that $\psi_k \eqdef \Psi(\Zk) - \Psi(\Zbar)>0$.
Owing to \iref{it:rrr6}, \iref{it:rrr7} and Definition \ref{defn:KLp}, the KL property of $\Psi$ implies that there exist $\epsilon, \eta$ and a concave function $\varphi$, and
\beq\label{eq:calU}
\calU \eqdef 
\Ba{ S \in \calH: \dist\pa{ S, \calC_{\Zk}} < \epsilon}
\mcap
\big[\Psi(\Zbar) < \Psi(S) < \Psi(\Zbar) + \eta\big] , %\Ba{(u,v) \in \bbR^{n} \times \bbR^{n}: \Psi(\Ybar,\Ybar) < \Psi(u, v) < \Psi(\Ybar,\Ybar) + \eta}  ,
\eeq
such that for all $Z \in \calU$:
\beq\label{eq:Psi-KL}
\varphi'\Pa{\Psi(z)-\Psi(\Zbar)} \dist\Pa{0, \partial \Psi(z)} \geq 1  .
\eeq
Let $k_1 \in \bbN$ be such that $\Psi(\Zk) < \Psi(\Zbar)+\eta$ holds for all $k \geq k_1$. Owing to \iref{it:rrr5}, there exists another $k_2 \in \bbN$ such that $\dist(\Zk, \calC_{\Zk}) < \epsilon$ holds for all $k \geq k_2$.
Let $K = \max\ba{k_1, k_2}$. Then $\Zk \in \calU$ holds for all $k \geq K$.
Furthermore using \eqref{eq:Psi-KL}, we have for $k\geq K$
\beqn
\varphi'\pa{\psi_k} \dist\Pa{0, \partial \Psi(\Zk)} \geq 1  .
\eeqn
Note that since $\varphi$ is concave, $\varphi'$ is decreasing. As $\Psi(\Zk)$ is decreasing too, we have
\[
\varphi\pa{\psi_k} - \varphi\pa{\psi_{k+1}}
\geq \varphi'\pa{\psi_k} \Pa{\Psi(\Zk) - \Psi(\Zkp)}  
\geq \sfrac{\Psi(\Zk) - \Psi(\Zkp)}{\dist\pa{0, \partial \Psi(\Zk)}}  .
\]
From \iref{it:rrr1}, since $\dist\pa{0, \partial \Psi(\Zk)} \leq \norm{\wk}$, we get
\[
\varphi\pa{\psi_k} - \varphi\pa{\psi_{k+1}}
\geq \sfrac{\Psi(\Zk) - \Psi(\Zkp)}{\norm{\wk}}  
\geq \sfrac{\Psi(\Zk) - \Psi(\Zkp)}{ \sigma (\Dt{k} + \Dt{k-1}) }  .
\]
Moreover, \iref{it:rrr2} yields $\Psi(\Zk) - \Psi(\Zkp) \geq \delta \Dt{k+1}^2$ and thus
\beqn
\varphi\pa{\psi_k} - \varphi\pa{\psi_{k+1}}
\geq \sfrac{ \delta \Dt{k+1}^2 }{ \sigma (\Dt{k} + \Dt{k-1}) }  ,
\eeqn
which yields
\beq\label{eq:epsk-phik}
\Dt{k+1}^2 
\leq  \Pa{ \sfrac{\sigma}{\delta} \pa{\varphi\pa{\psi_k} - \varphi\pa{\psi_{k+1}} } }  (\Dt{k} + \Dt{k-1})  .
\eeq
Taking the square root of both sides and applying Young's inequality  we further obtain
\beq\label{eq:2Dtkp}
\begin{aligned}
\Dt{k+1}
&\leq \sfrac{1}{2} (\Dt{k} + \Dt{k-1}) +  \sfrac{2\sigma}{\delta} \Pa{\varphi\pa{\psi_k} - \varphi\pa{\psi_{k+1}} }  .
\end{aligned}
\eeq
Summing up both sides over $k$, and using $x_{0}=x_{-1}$, we get
\[
\ell \eqdef 
\msum_{k\in\bbN} \Dt{k} 
\leq \Dt{1} + \sfrac{2\sigma}{\delta} \varphi\pa{\psi_1}
< \pinf  ,
\]
which concludes the finite length property of $\Yk$.

\item Then the convergence of the sequence follows from the fact that $\sequence{\Yk}$ is a Cauchy sequence, hence convergent. Owing to Lemma \ref{lem:subsequence-convergence}, there exists a critical point $\Ysol \in \crit(\Phi)$ such that~$\lim_{k\to\infty}\Yk = \Ysol$.

\item We now turn to prove local convergence to a global minimizer. 
Note that if $\Ysol$ is a global minimizer of $\Phi$, then $\Zsol$ is a global minimizer of $\Psi$. 
Let $r > \rho > 0$ such that $\ball{r}{\Zsol} \subset \calU$ and $\eta < \delta(r-\rho)^2$. Suppose that the initial point $Y_{0}$ is chosen such that following conditions hold,
% \beq
\begin{eqnarray}
\label{eq:Psineighb}
& \Psi(\Zsol) \leq \Psi(Z_{0}) < \Psi(\Zsol) + \eta \label{eq:Phi0} \\
& \norm{Y_{0}-\Ysol} + \ell(s-1) + 2\sqrt{\sfrac{\Psi(Z_{0})-\Psi(\Zsol)}{\delta}} + \qfrac{\sigma}{\delta} \varphi\pa{\psi_0} < \rho   .  \label{eq:x0}
\end{eqnarray}
% \eeq
%% 
The descent property \iref{it:rrr1} of $\Psi$ together with \eqref{eq:Psineighb} imply that for any $k\in\bbN$, $\Psi(\Zsol) \leq \Psi(\Zkp) \leq \Psi(\Zk) \leq \Psi(Z_{0}) < \Psi(\Zsol) + \eta \label{eq:Phi0}$, and
\beq\label{eq:up-bnd-Dtkp}
\norm{\Ykp-\Yk} \leq \sqrt{ \sfrac{ \Psi(\Zk) - \Psi(\Zkp) }{\delta} }  \leq \sqrt{ \sfrac{ \Psi(\Zk) - \Psi(\Zsol) }{\delta} }.
\eeq
Therefore, given any $k\in\bbN$, if we have $\Yk \in \ball{\rho}{\Ysol}$, then
\beq\label{eq:xk}
\begin{aligned}
\norm{\Ykp-\Ysol} 
&\leq \norm{\Yk-\Ysol} + \norm{\Ykp-\Yk}
\leq \norm{\Yk-\Ysol} +  \sqrt{ \sfrac{ \Psi(\Zk) - \Psi(\Zsol) }{\delta} }  \\
&\leq \rho + (r - \rho) = r  ,
\end{aligned}
\eeq
which means that $\Ykp \in \ball{r}{\Ysol}$.

{\hspace{12pt}}For any $k\in\bbN$, define the following partial sum $p_{k} \eqdef \sum_{j=k-2}^{k-1} \sum_{i=1}^{j} \Dt{i}$. 
Note that $p_{k} = 0$ for $k=1$, and $\lim_{k\to+\infty} p_{k} = \ell$.  
Next we prove the following claims through induction: for $k\in\bbN$
\begin{eqnarray}
& \Yk \in \ball{\rho}{\Ysol}  \label{eq:xkk} \\
& \msum_{j=1}^{k}\Dt{j+1} + \Dt{k+1}
    \leq \Dt{1} + p_{k} + \sfrac{\sigma}{\delta} \Pa{\varphi\pa{\psi_{1}} - \varphi\pa{\psi_{k+1}} }  . \label{eq:Dtkk}
\end{eqnarray}
From \eqref{eq:up-bnd-Dtkp} we have
\beq\label{eq:x1-x0}
\norm{Y_{1}-Y_{0}}
%\leq \sqrt{ \sfrac{ \Psi(Z_{0}) - \Psi(Z_{1}) }{\delta} } 
\leq \sqrt{ \sfrac{ \Psi(Z_{0}) - \Psi(\Zsol) }{\delta} } .
\eeq
Applying the triangle inequality we then obtain
\[
\norm{Y_{1}-\Ysol}
\leq \norm{Y_{0}-\Ysol} + \norm{Y_{1}-Y_{0}}
\leq \norm{Y_{0}-\Ysol} + \sqrt{ \sfrac{ \Psi(Z_{0}) - \Psi(\Zsol) }{\delta} }
< \rho  ,
\]
which means $Y_{1} \in \ball{\rho}{\Ysol}$. 
Now, taking $\kappa = 1$ in \eqref{eq:2Dtkp} yields, for any $k \in \bbN$,
\beq\label{eq:2Dtkp-2}
2\Dt{k+1}
\leq  (\Dt{k} + \Dt{k-1}) +  \sfrac{\sigma}{\delta} \pa{\varphi\pa{\psi_k} - \varphi\pa{\psi_{k+1}} }  .
\eeq
Let $k = 1$. Since $Y_{0}=Y_{-1}$, we have
\[
2\Dt{2}
\leq  \Dt{1} +  \sfrac{\sigma}{\delta} \pa{\varphi\pa{\psi_{1}} - \varphi\pa{\psi_{2}} }  .
\]
Therefore, \eqref{eq:xkk} and \eqref{eq:Dtkk} hold for $k = 1$. 

{\hspace{12pt}}Now assume that they hold for some $k > 1$. Using the triangle inequality and \eqref{eq:Dtkk}, 
\[
\begin{aligned}
\norm{\Ykp-\Ysol}
&\leq \norm{Y_{0}-\Ysol} + \Delta_1 + \msum_{j=1}^{k}\Dt{j+1}  \\
&\leq \norm{Y_{0}-\Ysol} + 2\Delta_1 + p_{k} + \sfrac{\sigma}{\delta} \pa{\varphi\pa{\psi_{1}} - \varphi\pa{\psi_{k+1}} }  \\
&\leq \norm{Y_{0}-\Ysol} + 2\Delta_1 + \ell + \sfrac{\sigma}{\delta} \pa{\varphi\pa{\psi_{1}} - \varphi\pa{\psi_{k+1}} }  \\
\eqref{eq:x1-x0}&\leq \norm{Y_{0}-\Ysol} + 2\sqrt{ \sfrac{ \Psi(Z_{0}) - \Psi(\Zsol) }{\delta} } + \ell + \sfrac{\sigma}{\delta} \pa{\varphi\pa{\psi_{1}} - \varphi\pa{\psi_{k+1}} }  .
\end{aligned}
\]
As $\varphi(\psi) \geq 0$ and $\varphi'(\psi) > 0$ for $\psi \in ]0,\eta[$, and in view of \eqref{eq:x0}, we arrive at
\[
\norm{\Ykp-\Ysol} \leq \norm{Y_{0}-\Ysol} + 2\sqrt{ \sfrac{ \Psi(Z_{0}) - \Psi(\Zsol) }{\delta} } + \ell + \sfrac{\sigma}{\delta}\varphi\pa{\psi_{0}} < \rho
\]
whence we deduce that \eqref{eq:xkk} holds at $k+1$. 
Now, taking \eqref{eq:2Dtkp-2} at $k+1$ gives 
\beq\label{eq:2Dtkpp}
\begin{aligned}
2\Dt{k+2}
&\leq (\Dt{k+1} + \Dt{k}) +  \sfrac{\sigma}{\delta} \pa{\varphi\pa{\psi_{k+1}} - \varphi\pa{\psi_{k+2}} }  \\
&\leq \Dt{k+1} + (\Dt{k} + \Dt{k-1}) +  \sfrac{\sigma}{\delta} \pa{\varphi\pa{\psi_{k+1}} - \varphi\pa{\psi_{(k+2}} }   .
\end{aligned}
\eeq
Adding both sides of \eqref{eq:2Dtkpp} and \eqref{eq:Dtkk} we get
\[
\begin{aligned}
\msum_{j=1}^{k+1}\Dt{j+1} + \Dt{k+2}
&\leq \Dt{1} + p_{k} + (\Dt{k} + \Dt{k-1}) + \sfrac{\sigma}{\delta} \pa{\varphi\pa{\psi_{1}} - \varphi\pa{\psi_{k+2}} }  \\  
&= \Dt{1} + p_{k+1} + \sfrac{\sigma}{\delta} \pa{\varphi\pa{\psi_{1}} - \varphi\pa{\psi_{k+2}} }  ,
\end{aligned}
\]
meaning that \eqref{eq:Dtkk} holds at $k+1$. This concludes the induction proof. 

{\hspace{12pt}}In summary, the above result shows that if we start close enough from $\Ysol$ (so that \eqref{eq:Psineighb}-\eqref{eq:x0} hold), then the sequence $\seq{\Yk}$ will remain in the neighbourhood $\ball{\rho}{\Ysol}$ and thus converges to a critical point $\Ybar$ owing to Lemma~\ref{lem:subsequence-convergence}. Moreover, $\Psi(\Zk) \to \Psi(\Zbar) \geq \Psi(\Zsol)$ by virtue of \iref{it:rrr3}. 
Now we need to show that $\Psi(\Zbar) = \Psi(\Zsol)$. Suppose that $\Psi(\Zbar) > \Psi(\Zsol)$. As $\Psi$ has the KL property at $\Zsol$, we have
\[
\varphi'\Pa{\Psi(\Zbar)-\Psi(\Zsol)} \dist\Pa{0, \partial \Psi(\Zbar)} \geq 1  .
\]
But this is impossible since $\varphi'(s) > 0$ for $s \in ]0,\eta[$, and $\dist\Pa{0, \partial \Psi(\Zbar)} = 0$ as $\Zbar$ is a critical point. Hence we have $\Psi(\Zbar) = \Psi(\Zsol)$, 
which means $\Phi(\Ybar) = \Phi(\Ysol)$, \ie the cluster point $\Ybar$ is actually a global minimizer. This concludes the proof. \qedhere
% EoF
\end{enumerate}
\end{proof}

\section{Proof of local linear convergence}
\label{sec:proof-local}

Before presenting the proof for local linear convergence, in Figure \ref{fig:rate} below we provide the comparison of theoretical estimation and practical observation. The size of the problem is $\bbR^{32\times 32}$, which is small as larger size will make the rate estimation very slow. It can be observed that our theoretical rate estimation is very tight given that the red line and the black one are parallel to each other.

\begin{figure}[!ht]
\centering
\begin{minipage}{0.4\textwidth}
  \centering
\includegraphics*[width =  \textwidth ]{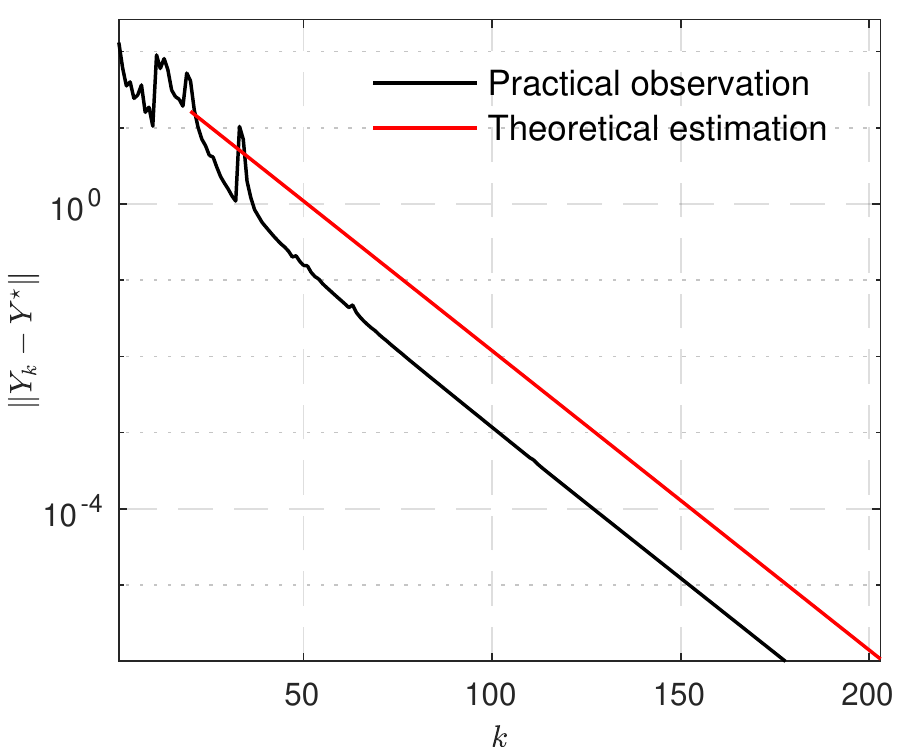} 
        { $\ak\equiv0$ }
\end{minipage}
\begin{minipage}{0.05\textwidth} $~$ \end{minipage}
\begin{minipage}{0.4\textwidth}
  \centering
\includegraphics*[width =  \textwidth ]{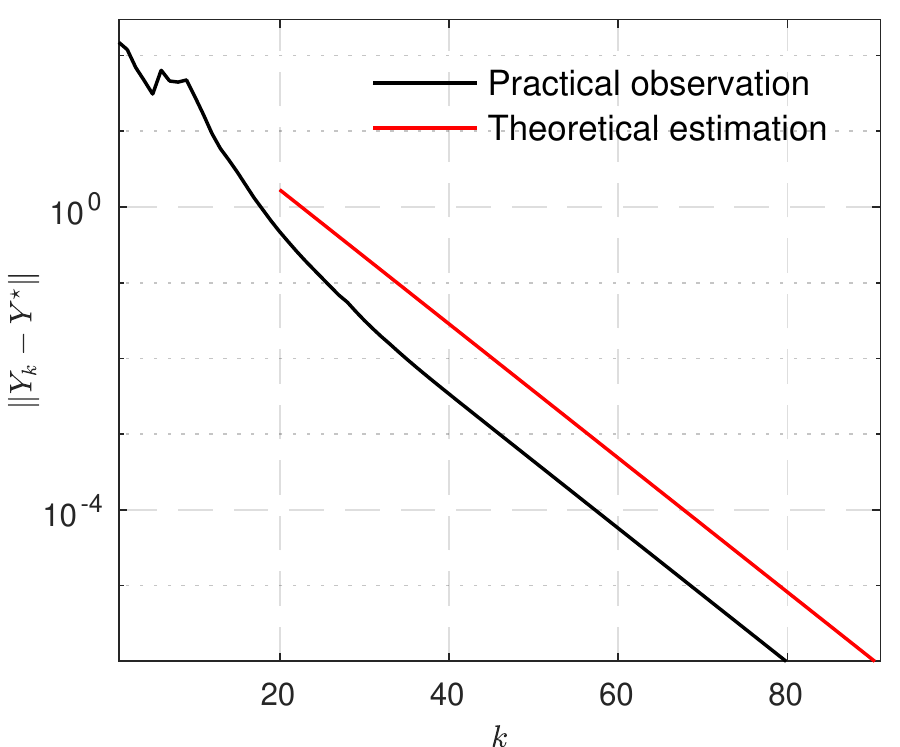} 
        { $\ak\equiv a = 0.5$  }
\end{minipage}%
\caption{ Local linear convergence of Algorithm \ref{alg:apg}.}
 \label{fig:rate}
\end{figure}

Since we are in the non-convex setting, we need the prox-regularity of the non-convexity. 
A lower semi-continuous function $\calR$ is $r$-prox-regular at $\xbar \in \dom(\calR)$ for $\vbar \in \partial \calR(\xbar)$ if $\exists r > 0$ such that $\calR(x') > \calR(x) + \iprod{v}{x'-x} - \frac{1}{2r}\norm{x-x'}^2$ $\forall x,x'$ near $\xbar$, $\calR(x)$ near $\calR(\xbar)$ and $v\in\partial \calR(x)$ near $\vbar$. 
%$\calR$ is prox-regular at $\xbar$ if it is prox-regular for every $\vbar \in \partial \calR(\xbar)$.

To prove Theorem \ref{thm:supp-iden}, we rely on a so-called partial smoothness concept. 
Let $\calM \subset \bbR^{n}$ be a $C^2$-smooth submanifold, let $\tanSp{\calM}{x}$ the tangent space of $\calM$ at any point $x \in \calM$.

 \begin{definition}\label{dfn:psf}%[Partly smooth function]\label{dfn:psf}
% Suppose that the set $\calM \subset \bbR^{n}$ contains point $\xbar$. 
 The function $\calR : \bbR^{n} \to \bbR \cup \ba{\pinf}$ is \emph{$C^2$-partly smooth at $\xbar \in \calM$ relative to $\calM$ for $\vbar \in \partial \calR(\xbar) \neq \emptyset$} if $\calM$ is a $C^2$-submanifold around $\xbar$, and
 \begin{enumerate}[label={\rm (\roman{*})}]
 \item {\pa{Smoothness}}: \label{PS:C2} 
 $\calR$ restricted to $\calM$ is $C^2$ around $\xbar$;
 \item {\pa{Regularity}}: \label{PS:Regular} 
$\calR$ is regular at all $x\in\calM$ near $\xbar$ and $\calR$ is $r$-prox-regular at $\xbar$ for $\vbar$;
 \item {\pa{Sharpness}}: \label{PS:Sharp} 
$\tanSp{\calM}{\xbar} = \LinHull\pa{\partial \calR(x)}^\perp$;
 \item {\pa{Continuity}}: \label{PS:DiffCont} 
 The set-valued mapping $\partial \calR$ is continuous at $\xbar$ relative to $\calM$.
 \end{enumerate}
 \end{definition}

We denote the class of partly smooth functions at $x$ relative to $\calM$ for $v$ as $\PSF{x,v}{\calM}$. Partial smoothness was first introduced in \citep{LewisPartlySmooth} and its directional version stated here is due to \citep{LewisPartlyTiltHessian,drusvyatskiy2013optimality}. Prox-regularity is sufficient to ensure that the partly smooth submanifolds are locally unique \citep[Corollary~4.12]{LewisPartlyTiltHessian}, \citep[Lemma~2.3 and Proposition~10.12]{drusvyatskiy2013optimality}.

\begin{proof}[Proof of Theorem \ref{thm:supp-iden}]
First we have
\begin{itemize}
\item
$\calY_{L}$ is a the set of fixed-rank matrices, hence it is partly smooth. 
\item
Since $\calS$ is a subspace, hence it is partly smooth at $S^\star$ relative to any $W \in (\calS)^\bot$. 
\end{itemize}
Under the conditions of Theorem \ref{thm:convergence-apg}, there exists a critical point $\Ysol$ such that $\Yk \to \Ysol$ and $\Phi(\Yk) \to \Phi(\Ysol)$.

Convergence properties of $\sequence{\Yk}$ (Theorem~\ref{thm:convergence-apg}) entails $\norm{\Zak-\Yk} \to 0$ and $\norm{\Zbk-\Ysol} \to 0$. In turn,
\beqn
\dist\Pa{-\nabla \calF(\xsol), \partial \calR(\Ykp)} 
\leq \sfrac{1}{\gamma} \norm{\Zak - \Ykp} + \norm{ \Zbk - \Ysol } \to 0  . 
\eeqn
% 
%Therefore, all the conditions in Theorem \ref{thm:identification} are fulfilled, hence yields the identification of $\xk$. 
Altogether, this shows that the conditions of \citep[Theorem~4.10]{LewisPartlyTiltHessian} or \citep[Proposition~10.12]{drusvyatskiy2013optimality} are fulfilled on $\calR$ at $\Ysol$ for $-\nabla \calF(\Ysol)$, and the identification result follows, that is 
\[
(\calY_{r, k} , \calY_{\alpha, k} ) \in \calY_{L} \times \calS
\]
for all $k$ large enough, and we conclude the proof. 
\qedhere
\end{proof}

\paragraph{Tangent space $T_{\calX}^{\Xsol}$}

Given $\Xsol \in \calX$, the tangent space simply reads $K X = 0$. Let $E$ be the kernel of $K$, then we have the projection operator onto $KX=0$ reads
\[
\proj_{T_{\calX}^{\Xsol}} = E(E^TE)^{-1}E^T   .
\]

\paragraph{Tangent space of $\calY_{L}$}
Let $\bbM = M_{m,n}(\bbR)$ be the space of $m\times n$ matrices with the classical inner product  $\iprod{A}{B} = \trace\pa{A^TB}$. 
The set of matrices with fixed rank $r$,
\[
\calY_{L} = \Ba{X\in \bbM: \rank\pa{X} = r} ,
\]
is a smooth manifold around any matrix $L\in\calY_{L}$. 
Given $L^\star$, with the help of the singular value decomposition $L=U\Sigma V^T$, the tagent space at $L$ to $\calY_{L}$ is 
\[
T_{\calY_{L}}^{L^\star} = \Ba{H\in \bbM: u_{i}^THv_{j} = 0,~\mathrm{for~all}~ r< i\leq m, r< j \leq n} .
\]
Let $U = [u_1,u_2,\cdots,u_m]$, $V = [v_1,v_2,\cdots,v_n]$ and $\Sigma$ be diagonal matrix with singular value written in decreasing order.

%Let 
%\[\begin{aligned}
%U_a &= [u_1,u_2,\cdots,u_r] \in \bbR^{m\times r},~U_b = [u_{r+1},u_{r+2},\cdots,u_m] \in \bbR^{m\times \pa{m-r}} \\
%V_a &= [v_1,v_2,\cdots,v_r] \in \bbR^{n\times r},~V_b = [v_{r+1},v_{r+2},\cdots,v_n] \in \bbR^{n\times \pa{n-r}} ,
%\end{aligned}\]
%then
%\[
%P_{\calT} 
%= \Id - P_{\calT^\bot} 
%= \Id - P_{U_a^\bot}P_{V_a^\bot} 
%= \Id - P_{U_b}(\cdot)P_{V_b} ,
%\]
%and 
%\[\begin{aligned}
%P_{U_a^\bot} &=  \Id - P_{U_a} = \Id - U_a(U_a^TU_a)^{-1}U_a^T = U_b(U_b^TU_b)^{-1}U_b^T = P_{U_b} , \\
%P_{V_a^\bot} &=  \Id - P_{V_a} = \Id - V_a(V_a^TV_a)^{-1}V_a^T = V_b(V_b^TV_b)^{-1}V_b^T = P_{V_b} .
%\end{aligned}\]
%%also 
%%\[
%%P^T_{\calT} 
%%= \Id - P^T_{U_b}(\cdot)P^T_{V_b} ,
%%\]
%therefore
%\[
%\norm{G_{k,\calT}}_2 
%= \norm{ G_k \circ P_{\calT} }_2 
%= \lambda_{\max} ,
%\]
%where $\lambda_{\max}$ is the square root of the eigenvalue of $P^T_{\calT}\circ G^T_kG_k  \circ P_{\calT}$.
%
%
%The above approach needs power iteration to evaluate the eigenvalues which is not desirable, below we describe a simpler one which expands $\bbR^{m\times n}$ to $\bbR^{mn}$. 
%
%\texttt{To me, below approach seems strictly better than above. Shall we maybe keep only the below one?}

Denote
\[
\calL = \bBa{ L\in \bbM: X = u_{i}^Tv_{j},~\mathrm{for~all}~ \ba{i,j}_{1\leq i\leq m, 1\leq j\leq n} \setminus \ba{i,j}_{r< i\leq m, r< j \leq n} } ,
\]
then $\calL$ forms the basis of $\calT$ and $\mathrm{dim}(\calL) = mn-r^2$, there for define
\[
Z = [L_1(:); L_2(:); \dotsm ; L_{mn-r^2}(:)],~~ L_i \in \calL ,
\]
and 
\[
\proj_{T_{\calY_{L}}^{L^\star}} = Z(Z^TZ)^{-1}Z^T ,
\]
then $\proj_{T_{\calY_{L}}^{L^\star}}$ is the explicit form of the projection operator of projecting onto subspace $T_{\calY_{L}}^{L^\star}$.

\paragraph{Tangent space of $\calS$}

Given $S^\star \in \calS$, denote the tangent space as $T_{\calS}^{S^\star}$. Let $\mathrm{vec}(S^\star)$ be the vector form of $S^\star$, then we haves
\[
\proj_{T_{\calS}^{S^\star}} = \mathrm{diag}\Pa{ \abs{\mathrm{vec}(S^\star)} > 0 }  .
\]

Finally, we have
\[
\proj_{T_{\calY}^{\Ysol}}
= \begin{bmatrix}  \proj_{T_{\calS}^{S^\star}} & \\ & \proj_{T_{\calY_{L}}^{L^\star}}  \end{bmatrix}  .
\]

\begin{proof}[Proof of Theorem \ref{thm:local-rate}]
From \eqref{eq:apg}, when $\ak, \bk\equiv0$, we have thats
\[
\Ykp = \proj_{\calY}\Pa{\Yk - \gamma \pa{\Yk - \proj_{\calX}(\Yk)}}   .
\]
Let $\Ysol$ be a critical point that $\Yk$ converges to, then
\[
\Ysol = \proj_{\calY}\Pa{\Ysol - \gamma \pa{\Ysol - \proj_{\calX}(\Ysol)}}   .
\]
Denote $\Xk = \proj_{\calX}(\Yk)$ and $\Xsol = \proj_{\calX}(\Ysol)$, we have
\[
\begin{aligned}
\Xk-\Xsol
= \proj_{T_{\calX}^{\Xsol}} (\Xk-\Xsol) 
= \proj_{T_{\calX}^{\Xsol}} \proj_{\calX}(\Yk -\Ysol) 
&= \proj_{T_{\calX}^{\Xsol}} (\Yk -\Ysol)   \\
&= \proj_{T_{\calX}^{\Xsol}} \proj_{T_{\calY}^{\Ysol}}(\Yk -\Ysol) + o(\norm{\Yk-\Ysol})  .
\end{aligned}
\]
Consider the difference of the above two equations, owing to Lemma \ref{lem:proj-M}, we get
\beqn%\label{eq:Ykp-Ysol}
\begin{aligned}
\Ykp - \Ysol
&= \proj_{\calY}\Pa{\Yk - \gamma \pa{\Yk - \proj_{\calX}(\Yk)}} - \proj_{\calY}\Pa{\Ysol - \gamma \pa{\Ysol - \proj_{\calX}(\Ysol)}}  + o(\norm{\Yk-\Ysol})  \\
&= \proj_{\calY}\Pa{ (1-\gamma)\Yk + \gamma \proj_{\calX}(\Yk) } - \proj_{\calY}\Pa{ (1-\gamma)\Ysol + \gamma \proj_{\calX}(\Ysol) }  + o(\norm{\Yk-\Ysol})  \\
&= \proj_{T_{\calY}^{\Ysol}} \Pa{ (1-\gamma)\Yk + \gamma \proj_{\calX}(\Yk) - (1-\gamma)\Ysol - \gamma \proj_{\calX}(\Ysol) }  + o(\norm{\Yk-\Ysol})  \\
&= \proj_{T_{\calY}^{\Ysol}} \Pa{ (1-\gamma)(\Yk-\Ysol) + \gamma (\Xk-\Xsol)  }  + o(\norm{\Yk-\Ysol})  \\
&= \proj_{T_{\calY}^{\Ysol}} \Pa{ (1-\gamma) \Id + \gamma \proj_{T_{\calX}^{\Xsol}}  } \proj_{T_{\calY}^{\Ysol}} (\Yk-\Ysol)  + o(\norm{\Yk-\Ysol})   ,
\end{aligned}
\eeqn
which means
\[
\Ykp - \Ysol  = \calP(\Yk-\Ysol) + o(\norm{\Yk-\Ysol})  .
\]
Note that $\calP$ is symmetric positive semi-definite, hence all its eigenvalues are real and lie in $[0, 1]$. 

Now, assume that $\bk=\ak \equiv a $, then we have from \eqref{eq:apg}
\[
\begin{gathered}
\Zk = (1+a)\Yk - a \Ykm    ,  \\
\Ykp = \proj_{\calY}\Pa{\Zk - \gamma \pa{\Zk - \proj_{\calX}(\Zk)}}  .
\end{gathered}
\]
Follow the derivation of $\Ykp - \Ysol $ above, we get
\[
\begin{aligned}
\Ykp - \Ysol  
&= (1+a)\calP(\Yk-\Ysol) - a \calP(\Ykm-\Ysol)  + o(\norm{\Yk-\Ysol})  \\
&= \begin{bmatrix} (1+a)\calP & - a\calP \end{bmatrix} \begin{pmatrix} \Yk-\Ysol \\ \Ykm - \Ysol \end{pmatrix}  + o(\norm{\Yk-\Ysol})  .
\end{aligned}
\]
Plus the definition of $D_k$ and the fact that $o(\norm{\Yk-\Ysol}) = o(\norm{D_{k}})$, we obtain
\[
D_{k+1}
= \calQ D_{k} + o(\norm{D_{k}})   .
\]

Owing to \citep[Chapter 6]{liang2016convergence}, if $\rho_{_{\calP}} < 1$, then so is $\rho_{_{\calQ}} < 1$, and the linear convergence result follows. \qedhere

\end{proof}

\section{Table of baseline methods \label{sec:table}}
\begin{table*}[!h]
\footnotesize
\begin{center}
\begin{tabular}{|c|c|c|c|c|}
\hline
{ \bf Algorithm} & { \bf Abbreviation} &{ \bf Appearing in Experiment }& { \bf Reference}\\
\hline
\hline
 \begin{tabular}{c}
 Inexact Augmented Lagrange \\
Method of Multipliers 
 \end{tabular}
& iEALM & Fig. \ref{syntheticdata1}, \ref{shadow_removal}, \ref{BG_full} &\citep{LinChenMa}  \\
\hline
Accelerated Proximal Gradient & APG & Fig. \ref{shadow_removal}, \ref{BG_full} &\citep{APG}  \\
\hline
Singular Value Thresholding & SVT & Table \ref{inlier_data} &\citep{caicandesshen}  \\
\hline
 \begin{tabular}{c}
 Grassmannian Robust Adaptive \\
Subspace Tracking Algorithm
 \end{tabular}
 & GRASTA& Fig. \ref{Bg_subsampled}  & \citep{grasta} \\
 \hline
Go Decomposition & GoDec & Fig. \ref{BG_full}, \ref{qunatitative_BG}  & \citep{godec}\\
\hline
Robust PCA Gradient Descent & RPCA GD & Fig. \ref{syntheticdata2}, \ref{shadow_removal}, \ref{BG_full}, \ref{Bg_subsampled}, \ref{syntheticdata3}, \ref{qunatitative_BG}, \ref{qunatitative_BG_1}   & \citep{RPCAgd}\\
\hline
Robust PCA Nonconvex Feasibility & RPCA NCF &  Fig. \ref{syntheticdata1}, \ref{shadow_removal}, \ref{BG_full}, \ref{Bg_subsampled}, \ref{qunatitative_BG}, \ref{qunatitative_BG_1} ,  \ref{inlier_detect} & \citep{duttahanzely}\\
\hline
Robust stochastic PCA Algorithms & \begin{tabular}{c}
 SGD, R-SGD1,  R-SGD2  \\
Inc,~R-Inc,~MD,~R-MD
 \end{tabular}& Fig.  \ref{inlier_detect},  Table \ref{inlier_data} & \citep{rspca} \\
\hline
\end{tabular}
\end{center}
\caption{\small{Algorithms compared in this paper.}}\label{algo}
\end{table*}

\end{document}